\title[Wave equation on Damek--Ricci spaces]
{The wave equation on Damek--Ricci spaces}
\author{Jean--Philippe Anker}
\address{Universit\'e d'Orl\'eans \& CNRS,
F\'ed\'eration Denis Poisson (FR 2964) \& Laboratoire MAPMO (UMR 6628),
B\^atiment de math\'ematiques -- Route de Chartres,
B.P. 6759 -- 45067 Orl\'eans cedex 2 -- France}
\email{anker@univ-orleans.fr}
\author{Vittoria Pierfelice}
\address{Universit\'e d'Orl\'eans \& CNRS,
F\'ed\'eration Denis Poisson (FR 2964) \& Laboratoire MAPMO (UMR 6628),
B\^atiment de math\'ematiques -- Route de Chartres,
B.P. 6759 -- 45067 Orl\'eans cedex 2 -- France}
\email{vittoria.pierfelice@univ-orleans.fr}
\author{Maria Vallarino}
\address{Universita' di Milano -- Bicocca,
Dipartimento di Matematica e Applicazioni,
Via Cozzi 53 -- 20125 Milano -- Italia}
\email{maria.vallarino@unimib.it}
\thanks{This work was mostly carried out
while the third author was a CNRS postdoc
at the {\it F\'ed\'eration Denis Poisson\/} Orl\'eans--Tours}
\date{\today}
\subjclass[2000]{35L05, 43A85, 58J45\,;
22E30, 35L71, 43A90, 47J35, 58D25}
\keywords{Damek--Ricci spaces, semilinear wave equation, 
dispersive estimate, Strichartz estimate, global well--posedness}
\newtheorem{lemma}{Lemma}[section]
\newtheorem{theorem}[lemma]{Theorem}
\newtheorem{prop}[lemma]{Proposition}
\newtheorem{corollary}[lemma]{Corollary}
\newtheorem{remark}[lemma]{Remark}
\newtheorem{definition}[lemma]{Definition}
\newcommand\1{1\hskip-.95mm\text{I}}
\newcommand\const{\text{const.}}
\renewcommand\Im{\operatorname{Im}}
\renewcommand\Re{\operatorname{Re}}
\newcommand\ssb{\hskip-.25mm}
\newcommand\ssf{\hskip.25mm}
\newcommand{\R}{\mathbb{R}}
\begin{document}
 
\begin{abstract}  
We study the dispersive properties of the wave equation associated
with the shifted Laplace--Beltrami operator on Damek--Ricci spaces,
and deduce Strichartz estimates for a large family of admissible pairs.  
As an application, we obtain global well--posedness results
for the nonlinear wave equation.
\end{abstract}

\maketitle

\section{Introduction}
 
The aim of this paper is to study the dispersive properties of the linear wave equation 
on Damek--Ricci spaces and their application to nonlinear Cauchy problems.  

For the linear wave equation on $\mathbb{R}^{n}$
\begin{equation}\label{WaveEuclidean}
\begin{cases}
&\partial_{\ssf t}^{\ssf 2} u(t,x)-\Delta_{\ssf x}u(t,x)=F(t,x)\,,\\
&u(0,x)=f(x)\,,\\
&\partial_{\ssf t}|_{t=0}\,u(t,x)=g(x)\,,\\
\end{cases}
\end{equation}
the theory is well established\,;
the dispersive $L^1\!\to\!L^{\infty}$ estimates are classical,
while Strichartz estimates were proved by \cite{GV} and \cite{KT}. 
These estimates serve as main tools
to study the corresponding nonlinear problems
and to prove local and global existence 
with either small or large initial data.
In particular, for the semilinear wave equation
\begin{equation}\label{SLWEuclidean}
\begin{cases}
&\partial_{\ssf t}^{\ssf 2} u(t,x)-\Delta_{\ssf x}u(t,x)=F(ut(,x))\,,\\
&u(0,x)=f(x)\,,\\
&\partial_{\ssf t}|_{t=0}\,u(t,x)=g(x)\,,\\
\end{cases}
\end{equation}
with
\begin{equation}\label{eq:nonlinin}
F(u)\sim|u|^{\gamma}\quad\text{near 0}\,,
\end{equation}
a fairly complete theory of well--posedness for small initial data exists.
The results depend on the space dimension $n$\ssf.
After the pioneering work \cite{J} of John in dimension $n\!=\!3$\ssf,
Strauss conjectured in \cite{Stra} that
the problem \eqref{SLWEuclidean} is globally well--posed
for small initial data provided
\begin{equation}\label{StraussExponent}\textstyle
\gamma>\gamma_{0}(n)=\frac12\ssb+\ssb\frac1{n-1}
+\sqrt{\bigl(\frac12\ssb+\ssb\frac1{n-1}\bigr)^2\ssb+\frac2{n-1}\ssf}
\qquad(\ssf n\!\ge\!2\ssf)\ssf.
\end{equation}
The negative part of the conjecture was verified in \cite{Si} by Sideris,
who proved blow up for generic data when $\gamma\!<\!\gamma_{0}(n)$
(and nonlinearities satisfying $F(u)\!\gtrsim\!|u|^{\gamma}$).
The positive part of the conjecture was also verified for any dimension in several steps 
(see e.g.
\cite{KP}
\cite{LS},\cite{GLS},
\cite{DGK},
as well as \cite{G} for a survey
and \cite{DF1}, \cite{DF2} for related results).

Several attemps have been made to extend
Strichartz estimates for dispersive equations
from Euclidean spaces to other settings.
In this paper we consider the \textit{shifted\/} wave equation
\begin{equation}\label{shiftedWave}
\begin{cases}
&\partial_{\ssf t}^{\,2}u(t,x)-(\Delta_{S}\ssb+\ssb Q^2/4)\,u(t,x)=F(t,x)\,\\
&u(0,x)=f(x)\,,\\
&\partial_t|_{t=0}\,u(t,x)=g(x)\,,
\end{cases}
\end{equation}
on \textit{Damek--Ricci spaces\/} $S$
(also known as \textit{harmonic\/} $N\!A$ \textit{groups\/}).
Recall  that these spaces are solvable extensions \ssf$S\!=\!N\!\ltimes\!\R^+$
of Heisenberg type groups $N$\ssb,
equipped with an invariant Riemannian structure\,;
$\Delta_S$ denotes the associated Laplace--Beltrami operator,
whose $L^2$ spectrum is the half line
$\bigl(-\ssf\infty\ssf;-\ssf Q^2\ssb/4\ssf\bigl]$,
and $Q$ the homogeneous dimension of $N$\ssb. 
As Riemannian manifolds, these solvable Lie groups include
all symmetric spaces of the noncompact type and rank one\,;
they are all harmonic but most of them are not symmetric,
thus providing counterexamples to the Lichnerowicz conjecture \cite{DR1}. 
We refer to Section \ref{Notation}
for more details about their structure and analysis thereon.

The Cauchy problem \eqref{shiftedWave}
was considered by Tataru \cite{Ta} and by Ionescu \cite{I1}.
Tataru obtained sharp dispersive $L^{q'}\hspace{-1mm}\to\!L^q$ estimates
for the operators
$$\textstyle
\cos\bigl(\ssf t\ssf\sqrt{\ssb-\Delta_{S}\!-\ssb Q^2/4\ssf}\ssf\bigr)
\quad\text{and}\quad
\frac{\sin\ssf(\ssf t\ssf\sqrt{\ssb-\Delta_{S}-\ssf Q^2/4\ssf}\ssf)}
{\sqrt{\ssb-\Delta_{S}-\ssf Q^2/4}}
$$
when $S$ is a real hyperbolic space,
while Ionescu investigated $L^q\!\to\!L^q$ estimates 
for these operators
when $S$ is a rank one symmetric space.

In \cite{APV2} we derived Strichartz estimates
for the Cauchy problem \eqref{shiftedWave} 
when $S$ is a real hyperbolic space.
Our aim here is to extend the results obtained in \cite{APV2} 
to the larger class of Damek--Ricci spaces.
The difficulty is due to the fact that
Damek--Ricci spaces  are nonsymmetric in general,
so that some of the proofs given in \cite{APV2}
do not work in this context. 
Despite this difficulty,
we are able to obtain Strichartz estimates
for solutions to the Cauchy problem (\ref{shiftedWave}). 
Corresponding results for the the Schr\"odinger
equation were obtained \cite{AP} and \cite{APV1} (see also \cite{P}).

In Section \ref{GWP} we apply our Strichartz estimates 
to obtain global well--posedness results
for the nonlinear wave equation with small initial data and low regularity.
Notice that this result is new even for hyperbolic spaces,
since in \cite{APV2} we only discussed local well--posedness. 
An interesting new feature,
which differentiates our results from the Euclidean case,
is the absence of a lower critical exponent
for power--like nonlinearities on Damek--Ricci spaces.
Indeed, for $\gamma\!>\!1$ arbitrarily close to $1$,
we are able to prove global existence for the problem
\begin{equation}\label{shiftedWaveSL}
\begin{cases}
&\partial_{\ssf t}^{\,2}\ssf u(t,x)-(\Delta_{S}\!+\ssb Q^2/4)\,u(t,x)=F(u(t,x))\,,\\
&u(0,x) = f(x)\,,\\
&\partial_t|_{t=0}\,u(t,x)=g(x)\,,
\end{cases}
\end{equation}
with small initial data and nonlinearities $F$ satisfying
\begin{equation*}%\label{power}
|F(u)|\le C\,|u|^\gamma
\quad\text{and}\quad
|\ssf F(u)\ssb-\ssb F(v)\ssf|\ssf\le\ssf
C\,(\ssf|u|^{\gamma-1}\!+\ssb|v|^{\gamma-1}\ssf)\,|\ssf u\ssb-\ssb v\ssf|\ssf.
\end{equation*}
Recall that Tataru \cite{Ta} proved global existence on hyperbolic spaces
for small smooth initial data,
provided the power $\gamma$ is greater
than the Strauss critical exponent \eqref{StraussExponent}.
Thus, by combining our results with \cite{Ta},
we see that the Cauchy problem \eqref{shiftedWaveSL} is well posed
for small smooth initial data and any power $\gamma\!>\!1$.
Notice moreover that in Theorem \eqref{WPL2}
we allow for small initial data with low regularity,
arbitrarily close to the critical one in the Euclidean case, 
which is determined by concentration and scaling arguments.

\section{Damek--Ricci spaces}
\label{Notation}

In this section we recall the definition of $H$--type groups,
describe their Damek--Ricci extensions,
and recall the main results of spherical analysis on these spaces.
For the details we refer the reader to
\cite{ADY, CDKR1, CDKR2, DR1, DR2, R}.
\smallskip

Let $\mathfrak{n}$ be a Lie algebra equipped  with an inner product $\langle\cdot,\cdot\rangle$ and denote by $|\cdot |$ the corre\-spon\-ding norm. Let $\mathfrak{v}$ and $\mathfrak{z}$ be complementary orthogonal subspaces of $\mathfrak{n}$ such that $[\mathfrak{n},\mathfrak{z} ]=\{0\}$ and $[\mathfrak{n},\mathfrak{n}]\subseteq \mathfrak{z}$.
According to Kaplan \cite{Kapl},
the algebra $\mathfrak{n}$ is of $H$--type if,
for every $Z$ in $\mathfrak{z}$ of unit length,
the map $J_Z:\mathfrak{v}\to \mathfrak{v}$,
defined by
$$
\langle J_ZX,Y\rangle\,=\,\langle Z,[X,Y]\rangle
\qquad\forall X, Y\in \mathfrak{v}\,,
$$
is orthogonal.
The connected and simply connected Lie group $N$
associated to $\mathfrak{n}$
is called an $H$--type group.
We identify $N$ with its Lie algebra $\mathfrak{n}$
via the exponential map
\begin{eqnarray*}
\mathfrak{v}\times\mathfrak{z}&\longrightarrow&N\\
(X,Z)&\longmapsto&\exp(X+Z)\,.
\end{eqnarray*}
Thus multiplication in $N$ reads
$$\textstyle
(X,Z)\ssf(X',Z')=\bigl(X\ssb+X',Z\ssb+\ssb Z'\!+\ssb\frac12\,[X,X']\bigr)
\qquad\forall X,\,X'\in\mathfrak{v}\quad\forall Z,\,Z'\in\mathfrak{z}\,.
$$
The group $N$ is a two-step nilpotent group
with Haar measure $dX\ssf dZ$\ssf.
The number \ssf$Q\ssb=\ssb\frac m2\ssb+\ssb k$\ssf,
where \ssf$m$ \ssf and \ssf$k$ \ssf denote
the dimensions of \ssf$\mathfrak{v}$ \ssf
and \ssf$\mathfrak{z}$ \ssf respectively,
is called the homogeneous dimension of \ssf$N$.

Let \ssf$S$ \ssf be the semidirect product
\ssf$S\ssb=\ssb N\ssb\ltimes\ssb\mathbb{R}^+$,
defined by
$$\textstyle
(X,Z,a)\ssf(X',Z',a')
=\bigl(X\ssb+a^{\frac12}X',
Z\ssb+a\ssf Z'\!+\ssb\frac12\,a^{\frac12}\ssf[X,X'],
a\ssf a'\bigr)
$$
for all \ssf$(X,Z,a),(X',Z',a')\!\in\!S$\ssf.
We shall denote by \ssf$n$ \ssf
the dimension \ssf$m\ssb+\ssb k\ssb+\!1$ of \ssf$S$\ssf.
Notice that \ssf$m$ \ssf is an even number $\ge\ssb2$
and we shall always assume that $k\!\ge\!1$
(the case when \ssf$k\ssb=\ssb0$
\ssf corresponds to real hyperbolic spaces
and has been investigated in \cite{APV2}). 
This implies that the dimension of the space \ssf$S$
\ssf is \ssf$n\!\ge\!4$\ssf. 

The group $S$ is nonunimodular.
Indeed the right and left Haar measures on $S$ are given respectively by 
$$
d\rho(X,Z,a)=a^{-1}\,dX\,d Z\,da
\qquad{\textrm{and}}\qquad
d\mu(X,Z,a)=a^{-(Q+1)}\,d X\,d Z\,d a \,.$$  
Then the modular function is \ssf$\delta(X,Z,a)\ssb=\ssb a^{-Q}$.
%For $p\in (1,\infty)$ we denote by $L^p(S,\di\mu)$ and $L^p(S,\di\rho)$
%the spaces of all measurable functions $f$
%such that $\int_S|f|^p \,{\mathrm{d}}\mu <\infty$
%and $\int_S|f|^p \,{\mathrm{d}}\rho <\infty$, respectively.

We equip $S$ with the left invariant Riemannian metric induced by the inner product
$$
\langle (X,Z,\ell),(X',Z',\ell')\rangle
=\langle X,X'\rangle+\langle Z,Z'\rangle+\ell\,\ell'\,,
$$
on the Lie algebra $\mathfrak{s}$ of $S$.
For every $x\!\in\!S$,
we shall denote by $r(x)$
the distance between the point $x$ and the identity $e$ of $S$
and by $a(x)$ the $A$--component of $x$,
i.e. the element \ssf$a(x)\!\in\!\mathbb{R}^+$
such that $x\ssb=\ssb(X,Z,a(x))$,
with $X\!\in\!\mathfrak{v}$, $Z\!\in\!\mathfrak{z}$\ssf.
The following useful inequality holds (see \cite[formula (1.20)]{ADY})\,:
\begin{equation}\label{loga}
|\log a(x)|\leq r(x)\qquad\forall\,x\!\in\!S\,.
\end{equation} 
The Riemannian measure is the left Haar measure $\mu$ introduced above 
and we denote by $\Delta_S$ the Laplace--Beltrami operator
associated with this Riemannian structure on $S$. 

A radial function on $S$ is a function
that depends only on the distance from the identity.
If $f$ is radial, then by \cite[formula (1.16)]{ADY}
$$
\int_S\,d\mu\,f=\int_0^{\infty}dr\,f(r)\,V(r)\,,
$$
where 
\begin{equation}\label{density}\textstyle
V(r)=\ssf2^{m+k}\,\sinh^{m+k}\frac r2\,\cosh^{k}\frac r2
\qquad\forall\,r\!\in\!\mathbb{R}^+.
\end{equation}
Let $\pi$ denote the radialisation operator defined in \cite[page 150]{A1} which associates to each function $f$ in $C^{\infty}(S)$ a radial function on $S$. More precisely, 
$$
\pi f(r)=\const\,\int_{\partial B(\mathfrak{s})}d\sigma\,f(r\sigma)
\qquad\forall\,r\!\in\!\mathbb{R}^+,
$$
where $\partial B(\mathfrak{s})$ is the unit sphere in $\mathfrak s$
and $d\sigma$ denotes the surface measure on it.  

The spherical functions $\varphi_{\lambda}$ on $S$
are normalized eigenfunctions of $\Delta_S$\,:
\begin{equation*}\begin{cases}
\;\Delta_{S}\,\varphi_\lambda
=-\bigl(\lambda^2\!+\ssb\frac{Q^2}4\bigr)\,\varphi_\lambda\,,\\
\;\varphi_\lambda(e)=1\,,
\end{cases}\end{equation*}
where \ssf$\lambda\!\in\!\mathbb{C}$ (see \cite[formula (2.6)]{ADY}). 
In the sequel we shall use various properties of the spherical functions,
which we now summarize.
We refer to \cite{ADY, DR2} for more details. 

All spherical functions are of the form 
\begin{equation}\label{radialisationformula}
\varphi_{\lambda}=\pi(\delta^{i\lambda/Q-1/2})=\pi(a(\cdot)^{-i\lambda+Q/2})
\qquad\forall\,\lambda\!\in\!\mathbb{C}\,,
\end{equation}
where $\delta$ is the modular function. This easily implies that 
\begin{equation}\label{philphi0}
|\varphi_{\lambda}(r)|\lesssim\varphi_0(r)
\qquad\forall\,\lambda\!\in\!\mathbb{C}\ssf,\,\forall\,r\!\in\!\mathbb{R}^+. 
\end{equation}
Moreover, it is well known that
\begin{equation}\label{phi0}
\varphi_0(r)\lesssim(1\!+\ssb r)\,e^{-\frac{Q}2r}
\qquad\forall\,r\!\in\!\mathbb{R}^+.
\end{equation}

The asymptotic behavior of the spherical functions is given by
\begin{equation*}\textstyle
\varphi_\lambda(r)
=\mathbf{c}(\lambda)\,\Phi_\lambda(r)
+\mathbf{c}(-\lambda)\,\Phi_{-\lambda}(r)
\qquad\forall\,\lambda\!\in\!\mathbb{C}\!\smallsetminus\!\frac i2\ssf\mathbb{Z}\,,
\end{equation*}
where
\begin{equation}\label{cfunction}\textstyle
\mathbf{c}(\lambda)= 
\Gamma\bigl(\frac n2\bigr)\,
2^{\ssf Q-2i\lambda}\,
\frac{\Gamma(2\,i\ssf\lambda)\vphantom{\big|}}
{\Gamma(i\ssf\lambda\ssf+\ssf\frac Q2)\,
\Gamma(i\ssf\lambda\ssf+\ssf\frac m4\ssf+\ssf\frac 12)
\vphantom{\big|}}
\end{equation}
and
\begin{equation*}\textstyle
\Phi_{\lambda}(r)=(2\cosh\frac r2)^{\ssf i\ssf2\lambda-Q}\,
{}_2F_1\bigl(\frac Q2\!-\!i\lambda\ssf,\frac m4\!-\!\frac12\!-\!i\lambda\ssf;
1\!-\!2\ssf i\lambda\ssf;(\cosh\frac r2)^{-2}\bigr)
\end{equation*}
(see \cite[pp. 7--8]{Ko}).
On one hand,
$\Phi_\lambda$ is another radial eigenfunction of \ssf$\Delta_S$
for the same eigenvalue \ssf$-\bigl(\lambda^2\!+\ssb\frac{Q^2}4\bigr)$,
i.e.
\begin{equation}\begin{aligned}\label{EigFunctEq}
0&\textstyle
=\bigl\{\ssf\Delta_S\ssb+\ssb\frac{Q^2}4\!+\ssb\lambda^2\bigr\}\,\Phi_\lambda(r)
=\bigl\{\ssf\partial_r^{\ssf2}\!+\ssb\frac{V'(r)}{V(r))}\,\partial_r\ssb
+\ssb\frac{Q^2}4\ssb+\ssb\lambda^2\bigr\}\,\Phi_\lambda(r)\\
&\textstyle
=V(r)^{-\frac12}\,
\bigl\{\ssf\partial_r^{\ssf2}\!-\ssb\omega(r)\ssb+\ssb\lambda^2\bigr\}\,
\bigl\{\ssf V(r)^{\frac12}\,\Phi_\lambda(r)\bigr\}\,,
\end{aligned}\end{equation}
where
\begin{equation}\begin{aligned}\label{omega}
\omega(r)
&\textstyle
=V(r)^{-\frac12}\,\partial_r^{\ssf2}\,V(r)^{\frac12}-\frac{Q^2}4\\
&\textstyle
=\frac14\ssf\frac m2\ssf\bigl(Q\!-\!1\bigr)\ssf
\bigl(\sinh\frac r2\bigr)^{-2}\!
+\ssb\frac k2\ssf\bigl(\frac k2\!-\!1\bigr)\ssf
\bigl(\sinh r\bigr)^{-2}\\
&=\,\sum\nolimits_{\ssf j=1}^{+\infty}\,\omega_j\,e^{-jr}
\qquad\text{with \;}\omega_j\!=\ssb\text{O}(j)\ssf.
\end{aligned}\end{equation}
On the other hand, the function $\Phi_\lambda$ can be expanded as follows\,:
\begin{equation}\begin{aligned}\label{expPhilambda}
\Phi_\lambda(r)\,&\textstyle=\,
{\displaystyle\sum\nolimits_{\ssf\ell=0}^{+\infty}}\,
\frac{\Gamma(Q/2-i\lambda+\ell)}{\Gamma(Q/2-i\lambda)}\,
\frac{\Gamma(m/4+1/2-i\lambda+\ell)}{\Gamma(m/4+1/2-i\lambda)}\,
\frac{\Gamma(1-2\ssf i\lambda+\ell)}{\Gamma(1-2\ssf i\lambda)}\,
\frac{2^{\ssf2\ssf\ell}}{\ell\,!}\,
(2\cosh\frac r2)^{2\ssf i\lambda-Q-2\ssf\ell}\\
\textstyle
&=\,2^{-\frac k2}\,V(r)^{-\frac12}\,
{\displaystyle\sum\nolimits_{\ssf \ell=0}^{+\infty}}\,
\Gamma_\ell(\lambda)\,e^{\ssf(i\lambda-\ell)\ssf r}
\qquad\text{as \;}r\ssb\to\ssb+\infty\ssf.
\end{aligned}\end{equation}
By combining \eqref{EigFunctEq}, \eqref{omega}, \eqref{expPhilambda},
the coefficients $\Gamma_\ell$ are shown
to satisfy the recurrence formula
\begin{equation}\label{recurrence}\begin{cases}
\;\Gamma_0\ssb=\ssb1\ssf,\\
\;\ell\,(\ssf\ell\ssb-\ssb i\,2\ssf\lambda)\,\Gamma_\ell(\lambda)
={\displaystyle\sum\nolimits_{\ssf j=0}^{\ssf\ell-1}}\;
\omega_{\ssf\ell-j}\,\Gamma_j(\lambda)
\quad\forall\;\ell\!\in\!\mathbb{N}^*.
\end{cases}\end{equation} 
It is well known (see e.g. \cite[Theorem 3.2]{A1}) that
there exist nonnegative constants \ssf$C$ and \ssf$d$ \ssf
such that
\begin{equation}\label{Gamma}
|\ssf\Gamma_{\ell}(\lambda)\ssf|\le C\,(1\!+\ssb\ell\ssf)^d\,,
\end{equation}
for all \ssf$\ell\!\in\!\mathbb{N}$ \ssf
and for all \ssf$\lambda\!\in\!\mathbb{C}$
\ssf with \ssf $\Im\lambda\ssb\ge\!-\ssf|\ssb\Re\lambda\ssf|$\ssf.
We shall need the following improved estimates.

\begin{lemma}\label{l: derGammaelle}
Let \,$0\!<\!\varepsilon\!<\!1$
and \,$\Omega_\varepsilon\ssb=\ssb\{\,\lambda\!\in\ssb\mathbb{C}
\mid|\ssb\Re\lambda\ssf|\!\le\ssb\varepsilon\,|\lambda|\ssf,\,
\Im\lambda\!\le\!-\ssf\frac{1\ssf-\ssf\varepsilon}2\,\}$\ssf.
Then, there exists a positive constant \,$d$ and,
for every \,$h\!\in\!\mathbb{N}$\ssf,
a positive constant \,$C$ such that
\begin{equation}\label{derestimate}\textstyle
|\,\partial_\lambda^{\,h}\ssf\Gamma_\ell(\lambda)\ssf|
\le C\,\ell^{\ssf d}\,(1\!+\!|\lambda|\ssf)^{-h-1}
\quad\forall\;\ell\!\in\!\mathbb{N}^*,
\,\forall\,\lambda\!\in\!\mathbb{C}\!\smallsetminus\!\Omega_\varepsilon\ssf.
\end{equation} 
\end{lemma}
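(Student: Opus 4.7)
The aim is to improve the uniform bound \eqref{Gamma} by an extra factor $(1+|\lambda|)^{-h-1}$ on the region $\mathbb{C}\setminus\Omega_\varepsilon$. My plan is a double induction --- on $h$, and for each $h$ on $\ell$ --- driven entirely by the recurrence \eqref{recurrence} and the linear growth $|\omega_j|=\mathrm{O}(j)$ from \eqref{omega}.

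The cornerstone is a geometric bound: for every $\ell\ge1$ and every $\lambda\in\mathbb{C}\setminus\Omega_\varepsilon$,
\begin{equation*}
|\,\ell-2i\lambda\,|\;\ge\;c_\varepsilon\,\max\bigl(\ell,\,1+|\lambda|\bigr),
\end{equation*}
for some $c_\varepsilon>0$. I would verify this by analysing $|\ell-2i\lambda|^2=(\ell+2\,\Im\lambda)^2+4\,(\Re\lambda)^2$ in the two alternatives defining $\mathbb{C}\setminus\Omega_\varepsilon$: if $|\Re\lambda|>\varepsilon|\lambda|$, then $|\ell-2i\lambda|\ge2\varepsilon|\lambda|$, and a short sign analysis of $\ell+2\,\Im\lambda$ also yields $|\ell-2i\lambda|\gtrsim\ell$; if instead $\Im\lambda>-(1-\varepsilon)/2$, then $\ell+2\,\Im\lambda\gtrsim\ell$ automatically, and for $|\lambda|$ large either $|\Re\lambda|$ dominates (reducing to the previous alternative) or $\Im\lambda$ is itself positive and $\gtrsim|\lambda|$, so $\ell+2\,\Im\lambda\gtrsim|\lambda|$ as well. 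The region $\Omega_\varepsilon$ is exactly where this fails --- e.g.\ at $\lambda=-i\ell/2$ the factor $\ell-2i\lambda$ vanishes.

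For $h=0$, I would induct on $\ell$ under the ansatz $|\Gamma_j(\lambda)|\le B_0\,j^d/(1+|\lambda|)$ for $1\le j<\ell$. Writing $M=\max(\ell,1+|\lambda|)$, using $|\omega_j|\le A\,j$, and invoking the elementary estimate $\sum_{j=1}^{\ell-1}(\ell-j)\,j^d\le C_d\,\ell^{d+2}$ with $C_d\to0$ as $d\to\infty$, the recurrence \eqref{recurrence} gives
\begin{equation*}
|\Gamma_\ell(\lambda)|\;\le\;\frac{A}{c_\varepsilon\,M}+\frac{A\,B_0\,C_d\,\ell^{d+1}}{c_\varepsilon\,M\,(1+|\lambda|)}\;\le\;\frac{A+A\,B_0\,C_d\,\ell^d}{c_\varepsilon\,(1+|\lambda|)},
\end{equation*}
using $M\ge1+|\lambda|$ in the first fraction and $M\ge\ell$ in the second to convert $\ell^{d+1}/M$ into $\ell^d$. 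The induction closes as long as $A\,C_d<c_\varepsilon$ and $B_0\ge A/(c_\varepsilon-A\,C_d)$, and the first of these is the only nontrivial constraint: it forces $d$ to be taken \emph{large enough}, which is precisely how the exponent $d$ in the statement is fixed.

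For $h\ge1$, differentiating \eqref{recurrence} $h$ times yields
\begin{equation*}
\partial_\lambda^h\Gamma_\ell(\lambda)\;=\;\frac{2ih}{\ell-2i\lambda}\,\partial_\lambda^{h-1}\Gamma_\ell(\lambda)+\frac{1}{\ell\,(\ell-2i\lambda)}\sum_{j=0}^{\ell-1}\omega_{\ell-j}\,\partial_\lambda^h\Gamma_j(\lambda),
\end{equation*}
noting $\partial_\lambda^h\Gamma_0=0$ for $h\ge1$. I would handle $\ell=1$ directly from $\Gamma_1(\lambda)=\omega_1/(1-2i\lambda)$, and then induct on $\ell$ in parallel with the $h$-induction: the first summand inherits its bound from the $(h-1)$-case using the extra gain $|\ell-2i\lambda|^{-1}\lesssim(1+|\lambda|)^{-1}$, while the second is handled exactly as for $h=0$. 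The resulting constants satisfy $B_h\lesssim h\,B_{h-1}/(c_\varepsilon-A\,C_d)$, finite for each fixed $h$. The one real obstacle throughout is the geometric bound above; once that is in place, what remains is careful bookkeeping with a judicious choice of $d$.
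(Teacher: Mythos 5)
Your argument is correct, and its core coincides with the paper's: the decisive ingredient in both is the lower bound $|\ell-2i\lambda|\ge c_\varepsilon\max\{\ell,1+|\lambda|\}$ on $\mathbb{C}\smallsetminus\Omega_\varepsilon$ (which the paper simply asserts, while you sketch its verification), followed by induction on $\ell$ in the recurrence \eqref{recurrence}, with the exponent $d$ taken large enough that the constant $C_d\sim(d+1)^{-1}$ in $\sum_{0<j<\ell}(\ell-j)\,j^{\ssf d}\lesssim C_d\,\ell^{\ssf d+2}$ makes the induction close --- this is exactly how the paper fixes $d$ via its condition $d+1\ge C=2A/B$. Where you genuinely diverge is the passage to derivatives: the paper disposes of $h\ge1$ in one line by Cauchy's formula applied to the $h=0$ bound, whereas you differentiate the recurrence $h$ times and run a double induction on $(h,\ell)$, using $\partial_\lambda^{\,h}\Gamma_0=0$ and the extra factor $|\ell-2i\lambda|^{-1}\lesssim(1+|\lambda|)^{-1}$ in the term carrying $\partial_\lambda^{\,h-1}\Gamma_\ell$. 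Your route costs more bookkeeping (the constants $B_h$ grow roughly like $h!$) but is entirely elementary, keeps the same $d$ uniformly in $h$ as the statement requires, and avoids the small point the paper leaves implicit in the Cauchy argument, namely that discs of radius comparable to $1+|\lambda|$ centred at points of $\mathbb{C}\smallsetminus\Omega_\varepsilon$ remain in $\mathbb{C}\smallsetminus\Omega_{\varepsilon'}$ for a smaller $\varepsilon'$, so that the $h=0$ estimate can be invoked on the integration contour; the paper's route, in exchange, is shorter and yields the derivative bounds with no further computation. Both proofs are sound.
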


\begin{proof}
The case \ssf$h\!\in\!\mathbb{N}^*$
follows by Cauchy's formula
from the case \ssf$h\!=\!0$\ssf,
that we prove now.
On one hand,
there exists \ssf$A\!\ge\!0$
\ssf such that
\begin{equation*}
|\,\omega_j\ssf|\le A\;j
\qquad\forall\;j\!\in\!\mathbb{N}^*.
\end{equation*}
On the other hand,
there exists \ssf$B\!>\!0$ \,such that
\begin{equation*}
|\,\ell\ssb-\ssb i\,2\ssf\lambda\ssf|
\ge\ssb B\,\max\,\{\ssf\ell,1\!+\!|\lambda|\ssf\}
\quad\forall\;\ell\!\in\!\mathbb{N}^*,
\,\forall\,\lambda\!\in\!\mathbb{C}\!\smallsetminus\!\Omega_\varepsilon\ssf.
\end{equation*}
Choose \ssf$C\!=\!2\ssf A\ssf/B$ and \ssf$d\ssb\ge\!1$
\ssf such that \ssf$d\!+\!1\ssb\ge\ssb C$\ssf.
For \ssf$\ell\ssb=\!1$\ssf,
we have \ssf$\Gamma_1(\lambda)\ssb
=\ssb\frac{\omega_1}{1\ssf-\,i\,2\ssf\lambda}$\ssf,
which implies
\begin{equation*}\textstyle
|\,\Gamma_1(\lambda)\ssf|
\le\frac AB\ssf\frac1{1\ssf+\ssf|\lambda|}
\le\frac C{1\ssf+\ssf|\lambda|}\,,
\end{equation*}
as required. For \ssf$\ell\!>\!1$\ssf, we have
\begin{equation*}\textstyle
\Gamma_\ell(\lambda)
=\frac{\omega_\ell}{\ell\,(\ssf\ell\ssf-\ssf i\,2\ssf\lambda)}
+\frac1{\ell\,(\ssf\ell\ssf-\ssf i\,2\ssf\lambda)}\,
{\displaystyle\sum\nolimits_{\ssf0<j<\ell}}\,
\omega_{\ssf\ell-j}\,\Gamma_j(\lambda)\,,
\end{equation*}
which implies
\begin{equation*}\begin{aligned}
|\,\Gamma_\ell(\lambda)\ssf|
&\textstyle
\le\frac AB\ssf\frac\ell{1\ssf+\ssf|\lambda|}
+\frac AB\ssf\frac1{\ell^2}\,
{\displaystyle\sum\nolimits_{\ssf0<j<\ell}}\,
(\ell\!-\!j)\,\frac{C\,j^{\ssf d}}{1\ssf+\ssf|\lambda|}\\
&\textstyle
\le\frac C2\ssf\frac{\ell^{\ssf d}}{1\ssf+\ssf|\lambda|}
+\frac C2\ssf\frac{\ell^{\ssf d}}{1\ssf+\ssf|\lambda|}\,\frac C\ell\,
{\displaystyle\sum\nolimits_{\ssf0<j<\ell}}\,
\bigl(\frac j\ell\bigr)^d\\
&\textstyle
\le\,C\,\frac{\ell^{\ssf d}}{1\ssf+\ssf|\lambda|}\,.
\end{aligned}\end{equation*}
\end{proof}
\medskip

The spherical Fourier transform \ssf$\mathcal{H}f$
of an integrable radial function \ssf$f$ on \ssf$S$ \ssf is defined by  
$$
\mathcal{H}f(\lambda)=\int_Sd\mu\,f\,\varphi_{\lambda}\,.
$$
For suitable radial functions \ssf$f$ on \ssf$S$,
an inversion formula and a Plancherel formula hold:  
$$f(x)=c_S\int_{0}^{\infty}d\lambda\,| {\mathbf{c}} (\lambda) |^{-2} \mathcal{H} f (\lambda)\,\varphi _{\lambda}(x)\, \, \qquad \forall x\in S\,,$$
and
$$\int_Sd\mu \,|f|^2\,  =c_S\int_0^{\infty}d\lambda\,|{\mathbf{c}}(\lambda)|^{-2}\,|\mathcal{H} f (\lambda)|^2\, \,,$$
where the constant $c_S$ depends only on $m$ and $k$. It is well known that
\begin{equation}\label{HCest}
| {\mathbf{c}}(\lambda)|^{-2}\lesssim |\lambda|^{-2}\,(1+|\lambda|)^{n-3}\qquad\forall \lambda\in\mathbb{R}\,.
\end{equation}

In the sequel we shall use the fact that
\ssf$\mathcal{H}\ssb=\ssb\mathcal{F}\ssb\circ\ssb\mathcal{A}$\ssf,
where $\mathcal{A}$ denotes the Abel transform
and $\mathcal{F}$ denotes the Fourier transform on the real line.
Actually we shall use the factorization
\ssf$\mathcal{H}^{-1}\!=\ssb\mathcal{A}^{-1}\!\circ\ssb\mathcal{F}^{-1}$.
For later use, let us recall the inversion formulae
for the Abel transform \cite[formula (2.24)]{ADY},
which involve the differential operators
\begin{equation*}\textstyle%\label{D1D2}
\mathcal{D}_1\ssb=-\ssf\frac1{\sinh r}\,\frac{\partial}{\partial r}
\quad\text{and}\quad
\mathcal{D}_2\ssb=-\ssf\frac1{\sinh(r/2)}\,\frac{\partial}{\partial r}\,.
\end{equation*}
If $k$ is even, then
\begin{equation}\label{inv1}
\mathcal{A}^{-1}\ssb f(r)
=a_S^e\,\mathcal{D}_1^{k/2}\,\mathcal{D}_2^{m/2}f\ssf(r)\,,
\end{equation}
where \ssf$a_S^e=2^{-(3m+k)/2}\,\pi^{-(m+k)/2}$,
while, if $k$ is odd, then
\begin{equation}\label{inv2}
\mathcal{A}^{-1}\ssb f(r)=a_S^o\int_r^{\infty}
\mathcal{D}_1^{(k+1)/2}\,\mathcal{D}_2^{m/2}f\ssf(s)\,{\mathrm{d}}\nu(s)\,,
\end{equation}
where \ssf$a_S^o= 2^{-(3m+k)/2}\,\pi^{-n/2}$ \ssf
and \ssf${\mathrm{d}}\nu(s)
=(\cosh s\ssb-\ssb\cosh r)^{-1/2}\sinh s\,{\mathrm{d}}s$.

\section{Sobolev spaces and conservation of energy}
\label{Sobolev}

Let us first introduce inhomogeneous Sobolev spaces on a Damek--Ricci space,
which will be involved in the conservation laws, in the dispersive estimates
and in the Strichartz estimates for the shifted wave equation.
We refer to \cite{Tr2} for more details about function spaces on Riemannian manifolds.

Let \,$1\!<\!q\!<\!\infty$
\,and \,$\sigma\!\in\!\R$\ssf.
By definition,
$H_q^\sigma(S)$ is the image of $L^q(S)$
under $(-\Delta_{S})^{-\frac\sigma2}$
(in the space of distributions on $S$),
equipped with the norm
\begin{equation*}
\|\ssf f\ssf\|_{H_q^\sigma}
=\ssf\|\ssf(-\Delta_{S})^{\frac\sigma2}f\ssf\|_{L^q}\,.
\end{equation*}
In this definition, we may replace $-\Delta_{S}$ by
$-\Delta_{S}\!-\!\frac{Q^2}4\hspace{-1mm}+\!\frac{\widetilde{Q}^2}4$,
where \ssf$\widetilde{Q}\!>\!Q$ and we set
\begin{equation*}\textstyle
\widetilde D=\bigl(\ssb-\ssf\Delta_{S}\!-\!\frac{Q^2}4\hspace{-1mm}
+\!\frac{\widetilde{Q}^2}4\bigr)^{\frac12}\,.
\end{equation*}
Thus $H_q^\sigma(S)\!=\!\widetilde D^{-\sigma}L^q(S)$
and $\|\ssf f\ssf\|_{H_q^\sigma}\!
\sim \ssb\|\ssf\widetilde D^{\ssf\sigma\ssb}f\ssf\|_{L^q}$.
If \ssf$\sigma\ssb=\ssb N$ \ssf is a nonnegative integer,
then $H_q^\sigma(S)$ co\"{\i}ncides with the Sobolev space
\begin{equation*}
W^{N,q}(S)=\{\,f\!\in\!L^q(S)\mid\nabla^j\ssb f\!\in\! L^q(S)
\hspace{2mm}\forall\;1\!\le\!j\!\le\!N\,\}
\end{equation*}
defined in terms of covariant derivatives and equipped with the norm
\begin{equation*}
\|f\|_{W^{N,q}}=\ssf\sum\nolimits_{\ssf j=0}^{\,N}
\|\ssf\nabla^j\ssb f\ssf\|_{L^q}\,.
\end{equation*}
By following the same proof of \cite[Proposition 3.1]{APV2}
we obtain the following Sobolev embedding Theorem. 

\begin{prop}\label{SET}
Let \,$1\!<\!q_1\!<\!q_2\!<\!\infty$ \ssf
and \,$\sigma_1,\sigma_2\!\in\!\R$ such that
\,$\sigma_1\!-\!\frac n{q_1}\ssb\ge\ssb\sigma_2\!-\!\frac n{q_2}$.
Then
\begin{equation*}
H_{q_1}^{\sigma_1}(S)\subset H_{q_2}^{\sigma_2}(S)\,.
\end{equation*}
By this inclusion, we mean that
there exists a constant \,$C\!>\!0$ \ssf
such that
\begin{equation*}
\|f\|_{H_{q_2}^{\sigma_2}}\le C\,\|f\|_{H_{q_1}^{\sigma_1}}
\qquad\forall\;f\!\in\!C_c^\infty(S)\,.
\end{equation*}
\end{prop}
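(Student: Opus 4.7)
The plan is to reduce the embedding to the convolution estimate $\widetilde{D}^{-\tau}:L^{q_1}(S)\to L^{q_2}(S)$, where $\tau:=\sigma_1-\sigma_2$. The hypotheses $q_1<q_2$ and $\sigma_1-\frac{n}{q_1}\ge\sigma_2-\frac{n}{q_2}$ force $\tau\ge n(\tfrac{1}{q_1}-\tfrac{1}{q_2})>0$, and composing with an extra negative power of $\widetilde D$, which is bounded on $L^{q_2}(S)$ by standard multiplier theory, reduces matters to the critical case $\tau=n(\tfrac{1}{q_1}-\tfrac{1}{q_2})$. Since $\widetilde D^{\ssf 2}$ has spherical eigenvalues $\lambda^{2}+\tfrac{\widetilde Q^{2}}{4}$, the operator $\widetilde{D}^{-\tau}$ acts as convolution on $S$ with a radial kernel $k_\tau$ determined by
\begin{equation*}
\mathcal{H}k_\tau(\lambda)=\bigl(\lambda^{2}+\tfrac{\widetilde Q^{2}}{4}\bigr)^{-\tau/2}.
\end{equation*}
Using the factorisation $\mathcal{H}^{-1}=\mathcal{A}^{-1}\!\circ\mathcal{F}^{-1}$ together with the explicit Abel inversion formulas \eqref{inv1}--\eqref{inv2}, the kernel $k_\tau$ becomes accessible to pointwise analysis.

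I would then split $k_\tau=k_\tau^{0}+k_\tau^{\infty}$ by means of a smooth radial cut-off at $r=1$ and treat the two pieces separately. For the local part $k_\tau^{0}$, the one-dimensional inverse Fourier transform of $(\lambda^{2}+\tfrac{\widetilde Q^{2}}{4})^{-\tau/2}$ is a classical Bessel potential, and combining it with the operators $\mathcal{D}_1,\mathcal{D}_2$ and with the asymptotics $V(r)\sim r^{\ssf n-1}$ as $r\to 0^{+}$ yields a pointwise bound of the form $k_\tau^{0}(r)\lesssim r^{\ssf\tau-n}$ near the origin. Convolution with such a kernel then maps $L^{q_1}(S)$ into $L^{q_2}(S)$ by the standard Hardy--Littlewood--Sobolev inequality, since in a neighbourhood of the identity the Riemannian volume on $S$ is comparable to Lebesgue measure on $\R^{n}$. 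For the global part $k_\tau^{\infty}$, I would exploit the spectral shift $\widetilde Q>Q$: inserting the asymptotic expansion \eqref{expPhilambda} into the inverse spherical transform and deforming the $\lambda$-contour into a lower half-plane, while controlling the series coefficients through \eqref{Gamma} and Lemma \ref{l: derGammaelle}, one expects an estimate of the form $|k_\tau^{\infty}(r)|\lesssim(1+r)^{\ssf N}\,e^{-\widetilde Q r/2}$ for some $N$ and all $r\ge 1$. Since $V(r)\sim e^{\ssf Qr}$ at infinity and $\widetilde Q>Q$, the kernel $k_\tau^{\infty}$ then belongs to $L^{s}(S,d\mu)$ for every $s\in[1,\infty]$, and Young's convolution inequality on $S$ yields the desired bound for the global part.

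The main obstacle I anticipate is the local asymptotics of $k_\tau$: outside the symmetric-space setting there is no Harish-Chandra series expansion at hand, but the explicit Abel inversion formulas \eqref{inv1}--\eqref{inv2} reduce the computation to one-dimensional Fourier analysis of a classical Bessel potential, which keeps the argument robust. The nonunimodularity of $S$ is harmless here since we work exclusively with radial kernels, whose analysis through the density $V(r)$ is insensitive to the modular function $\delta$. Putting the local and global estimates together produces the inclusion $H_{q_1}^{\sigma_1}(S)\subset H_{q_2}^{\sigma_2}(S)$.
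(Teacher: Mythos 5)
Your overall architecture --- reducing the embedding to the $L^{q_1}\!\to L^{q_2}$ boundedness of $\widetilde D^{-\tau}$ with $\tau=\sigma_1-\sigma_2\ge n\bigl(\frac1{q_1}-\frac1{q_2}\bigr)$, estimating its radial convolution kernel separately near the identity and at infinity, and concluding by fractional integration locally and by Young's inequality (which is indeed legitimate here for radial kernels) globally --- is the strategy the paper intends, since its proof consists in carrying over \cite[Proposition 3.1]{APV2}, which rests on exactly such kernel estimates for the Bessel-type potential. Your reduction to the critical case via $L^{q_2}$-boundedness of a negative power of $\widetilde D$ (Astengo's multiplier theorem \cite{A1}) and your local analysis, giving $k_\tau^{0}(r)\lesssim r^{\ssf\tau-n}$ and then Hardy--Littlewood--Sobolev near the identity, are sound.

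The genuine gap is in the part at infinity. The bound you claim, $|k_\tau^{\infty}(r)|\lesssim(1+r)^{N}e^{-\widetilde Qr/2}$, does not yield $k_\tau^{\infty}\in L^{s}(S,d\mu)$ for all $s\in[1,\infty]$: since $V(r)\asymp e^{\ssf Qr}$ at infinity, it gives integrability only for $s>2Q/\widetilde Q$, and under the sole assumption $\widetilde Q>Q$ this threshold may be arbitrarily close to $2$; on the other hand Young's inequality requires $\frac1s=1-\frac1{q_1}+\frac1{q_2}$, hence $s$ arbitrarily close to $1$ when $q_1$ and $q_2$ are close, so the global step fails as written. Moreover the contour deformation you invoke to produce this bound is itself not available term by term: the coefficients $\Gamma_\ell(\lambda)$ have poles at $\lambda=-\ssf i\ssf\ell/2$, which is precisely why Lemma \ref{l: derGammaelle} excludes the region $\Omega_\varepsilon$ around the negative imaginary axis, and $\mathbf c(-\lambda)^{-1}$ may also have poles at depth $\le\frac Q2$ (for instance when $k$ is odd), so one cannot push the contour down to $\Im\lambda\simeq-\widetilde Q/2$ uniformly in $\ell$. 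What the argument actually needs, and what is true, is the stronger decay $|k_\tau(r)|\lesssim(1+r)^{N}e^{-\frac{Q+\widetilde Q}2\ssf r}$ for $r\ge1$: the factor $V(r)^{-\frac12}\asymp e^{-\frac Q2 r}$ in \eqref{expPhilambda}, which your estimate drops, already contributes $e^{-\frac Q2 r}$, and the additional $e^{-\frac{\widetilde Q}2 r}$ is most cleanly obtained by subordinating $\widetilde D^{-\tau}$ to the heat semigroup, $\widetilde D^{-\tau}=\frac1{\Gamma(\tau/2)}\int_0^\infty t^{\frac\tau2-1}\ssf e^{-t\widetilde Q^2/4}\,e^{\ssf t\ssf(\Delta_S+Q^2/4)}\ssf dt$, and using the Gaussian-type heat kernel bounds on Damek--Ricci spaces from \cite{ADY}; this bypasses the contour issues altogether. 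Since $\frac{Q+\widetilde Q}2>Q$, the corrected bound puts $k_\tau^{\infty}$ in $L^{s}$ for every $s\ge1$, and your Young argument then closes the proof. (Simply choosing $\widetilde Q>2Q$ would make your weaker bound sufficient, but it does not remove the need to justify that bound, so the subordination route is the one to use.)
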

\medskip

Beside the $L^q$ Sobolev spaces $H_{\ssf q}^\sigma(S)$,
our analysis of the shifted wave equation on $S$
involves the following $L^2$ Sobolev spaces\,:
\begin{equation*}
H^{\sigma,\tau}(S)=\ssf\widetilde D^{-\sigma}D^{-\tau}L^2(S)\ssf,
\end{equation*}
where \ssf$D\!
=\!\bigl(\ssb-\ssf\Delta_{S}\!-\!\frac{Q^2}4\bigr)^{\frac12}$\ssf,
\ssf$\sigma\!\in\!\R$ \ssf and \ssf$\tau\!<\!\frac32$
\ssf(actually we are only interested in the cases
\ssf$\tau\ssb=\ssb0$ \ssf and \ssf$\tau\ssb=\ssb\pm\ssf\frac12$\ssf).
Notice that
\begin{equation*}\begin{cases}
\,H^{\sigma,\tau}(S)\ssb
=\ssb H_{\ssf2}^{\sigma}(S)
&\text{if \,}\tau\ssb=\ssb0\ssf,\\
\,H^{\sigma,\tau}(S)\ssb
\subset\ssb H_{\ssf2}^{\sigma+\tau}(S)
&\text{if \,}\tau\ssb<\ssb0\ssf,\\
\,H^{\sigma,\tau}(S)\ssb
\supset\ssb H_{\ssf2}^{\sigma+\tau}(S)
&\text{if \,}0\ssb<\ssb\tau\ssb<\ssb\frac32\ssf.\\
\end{cases}\end{equation*}

\begin{lemma}
If \;$0\ssb<\!\tau\!<\frac32$\ssf, then
\begin{equation*}
H^{\sigma,\tau}(S)\subset
H_2^{\sigma+\tau}(S)+H_{2^+}^\infty(S)\ssf,
\end{equation*}
where $H_{2^+}^\infty(S)\ssb=
\bigcap_{\,\substack{s\in\R\\q>2}\vphantom{\big|}}\ssb
H_q^s(S)$
{\rm (}recall that $H_q^s(S)$ is decreasing
as $q\!\searrow\!2$ and $s\!\nearrow\!+\infty)$.
\end{lemma}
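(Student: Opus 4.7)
The plan is a Littlewood--Paley splitting of $f$ into a low- and a high-frequency part relative to $D$. Fix $\psi_0 \in C_c^\infty([0,\infty))$ with $\psi_0 \equiv 1$ on a neighbourhood of $0$, set $\psi_1 := 1 - \psi_0$, and decompose $f = f_1 + f_2$ with $f_1 := \psi_1(D) f$ and $f_2 := \psi_0(D) f$. Since $D$, $\widetilde D$ and all $\psi_j(D)$ are mutually commuting Borel functions of $-\Delta_S$, the spectral theorem applies throughout. I would then show that $f_1 \in H_2^{\sigma+\tau}(S)$ and $f_2 \in H_{2^+}^\infty(S)$ separately.

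For the high-frequency piece, the identity
\begin{equation*}
\widetilde D^{\sigma+\tau} f_1 = \bigl[(\widetilde D/D)^\tau \psi_1(D)\bigr]\,\bigl(D^\tau \widetilde D^\sigma f\bigr)
\end{equation*}
reduces matters to the $L^2$-boundedness of the bracketed spectral multiplier, whose symbol $m(\lambda) = \psi_1(\lambda)\,(\lambda^2 + \widetilde Q^2/4)^{\tau/2}\,\lambda^{-\tau}$ vanishes near $\lambda = 0$ and tends to $1$ as $\lambda \to +\infty$, hence is bounded on $[0,\infty)$. The spectral theorem then gives $\|f_1\|_{H_2^{\sigma+\tau}} \lesssim \|f\|_{H^{\sigma,\tau}}$.

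For the low-frequency piece I must prove $\widetilde D^s f_2 \in L^q(S)$ for every $s \in \R$ and $q > 2$. Setting $g := D^\tau \widetilde D^\sigma f \in L^2(S)$, one has $\widetilde D^s f_2 = m_s(D) g$ with
\begin{equation*}
m_s(\lambda) = \psi_0(\lambda)\,\lambda^{-\tau}\,(\lambda^2 + \widetilde Q^2/4)^{(s-\sigma)/2},
\end{equation*}
a compactly supported symbol with only the mild singularity $\lambda^{-\tau}$ at $0$. The associated left-invariant operator on $S$ is convolution with a radial kernel $k_s$ whose spherical Fourier transform is $m_s$; by spherical inversion and \eqref{philphi0},
\begin{equation*}
|k_s(x)| \le \varphi_0(x) \int_0^\infty |m_s(\lambda)|\,|\mathbf{c}(\lambda)|^{-2}\,d\lambda.
\end{equation*}
The simple pole of $\Gamma(2i\lambda)$ in \eqref{cfunction} yields $|\mathbf{c}(\lambda)|^{-2} \sim \lambda^2$ as $\lambda \to 0^+$, so the last integral behaves like $\int_{0^+} \lambda^{\,2-\tau}\,d\lambda$ near $0$ and converges whenever $\tau < 3$, a fortiori under our hypothesis $\tau < 3/2$. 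Thus $|k_s| \lesssim \varphi_0$, and by \eqref{phi0} the kernel $k_s$ lies in $L^q(S)$ for every $q > 2$. A Kunze--Stein type inequality for right convolution with a radial $L^q$ kernel on $S$ then yields $\|m_s(D) g\|_{L^q} \lesssim \|g\|_{L^2}$ for every $q > 2$, whence $f_2 \in H_{2^+}^\infty(S)$.

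The main obstacle is this last step -- the $L^2 \to L^q$ mapping property of $m_s(D)$ for every $q > 2$. It rests on a Kunze--Stein phenomenon adapted to the nonunimodular solvable group $S$, together with the decisive fact, visible from \eqref{phi0} and the volume density \eqref{density}, that $\varphi_0 \in L^q(S)$ for $q > 2$ but $\varphi_0 \notin L^2(S)$ (a manifestation of the exponential volume growth of $S$). The bound $\tau < 3/2$ enters only through the kernel integral above (whose sharp threshold is in fact $\tau < 3$); the stricter condition is imposed earlier, in the very definition of $H^{\sigma,\tau}(S)$.
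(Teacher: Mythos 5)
Your decomposition, the treatment of the high--frequency piece, and the reduction of the low--frequency piece to the $L^2\!\to\!L^q$ boundedness of $m_s(D)$ are all fine, as is the crude bound $|k_s|\lesssim\varphi_0$. The gap is the last step: there is no Kunze--Stein type inequality on $S$ of the form $\|g*\kappa\|_{L^q}\lesssim\|g\|_{L^2}\,\|\kappa\|_{L^q}$ for radial $\kappa$, and the information $|k_s|\lesssim\varphi_0$ alone cannot yield $\|g*k_s\|_{L^q}\lesssim\|g\|_{L^2}$. Indeed, take $\kappa_R=\varphi_0\,\1_{\{r\le R\}}$ and $g_R=e^{-\frac Q2 r}\,\1_{\{r\le R\}}$: for each fixed $q>2$ one has $\|\kappa_R\|_{L^q}\asymp 1$ uniformly in $R$ and $\|g_R\|_{L^2}\asymp R^{1/2}$, while $(g_R*\kappa_R)(x)\gtrsim\int_0^{R-1}(1+r)\,dr\asymp R^2$ for $x$ in the unit ball, so the $L^2\!\to\!L^q$ operator norms blow up like $R^{3/2}$. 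The same obstruction appears if you try to use the paper's own Lemma \ref{KS} with $\tilde q=2$: it requires $\int_0^\infty dr\,V(r)\,\varphi_0(r)^{4/(q+2)}\,|\kappa(r)|^{2q/(q+2)}<\infty$, and under the mere bound $|\kappa|\lesssim\varphi_0$ the integrand is $\asymp(1+r)^2$, hence the integral diverges.

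This also invalidates your closing diagnosis: the hypothesis $0<\tau<\frac32$ is not merely inherited from the definition of $H^{\sigma,\tau}$, and the relevant threshold is not $\tau<3$. To make the low--frequency step work one must exploit the extra decay in $r$ coming from the vanishing of $|\mathbf{c}(\lambda)|^{-2}$ at $\lambda=0$ together with the oscillation of $\varphi_\lambda(r)$: inserting the expansion \eqref{expPhilambda} and applying Lemma A.1 to the symbol $\mathbf{c}(\mp\lambda)^{-1}m_s(\lambda)\sim\lambda^{1-\tau}$ gives $|k_s(r)|\lesssim(1+r)^{\tau-2}\,e^{-\frac Q2 r}$ at infinity; even then, Lemma \ref{KS} with $\tilde q=2$ only yields $q>\frac6{3-2\tau}$, so a further idea is needed to reach every $q>2$ -- for instance a $TT^*$ argument: the kernel of $|m_s|^2(D)$ is controlled by $(1+r)^{2\tau-3}e^{-\frac Q2 r}$ precisely because $\tau<\frac32$, and Lemma \ref{KS} with $\tilde q=q$ then gives $L^{q'}\!\to\!L^q$ boundedness for all $q>2$, hence $m_s(D):L^2\!\to\!L^q$. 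Moreover, testing $m_s(D)$ on functions whose spectral measure concentrates at $\lambda=0$ shows that for $\tau>\frac32$ the operator $\psi_0(D)D^{-\tau}$ is unbounded from $L^2$ to every $L^q$ with $q>2$, so the restriction $\tau<\frac32$ is genuinely active at this very step. (For the record, the paper does not reprove the lemma but refers to \cite[Lemma 3.2]{APV2}.)
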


\begin{proof}
See \cite[Lemma 3.2]{APV2}. 
\end{proof}

Let us next introduce the energy
\begin{equation}\label{energy}\textstyle
E(t)=\frac12{\displaystyle\int_{S}}\hspace{-1mm}\,d\mu(x)\,
\bigl\{\ssf|\partial_{\ssf t}u(t,x)|^2\ssb+|D_xu(t,x)|^2\ssf\bigr\} 
\end{equation}
for solutions to the homogeneous Cauchy problem
\begin{equation}\label{wavehom}
\begin{cases}
&\partial_{\ssf t}^{\ssf2}u-\bigl(\Delta_{S}\!+\!\frac{Q^2}4\bigr)\ssf u=0\,\\
&u(0,x)=f(x)\,\\
&\partial_{\ssf t}|_{t=0}\,u(t,x)=g(x)\,.
\end{cases}
\end{equation}
It is easily verified that $\partial_{\ssf t}E(t)\!=\!0$\ssf,
hence \eqref{energy} is conserved.
In other words,
for every time \ssf$t$ \ssf in the interval of definition of \ssf$u$\ssf,
\begin{equation*}
\|\ssf\partial_{\ssf t}u(t,x)\|_{L_x^2}^{\ssf2}\ssb
+\|D_xu(t,x)\|_{L_x^2}^{\ssf2}
=\|g\|_{L^2}^{\ssf2}\ssb+\|D\ssb f\|_{L^2}^{\ssf2}\,.
\end{equation*}
Let \ssf$\sigma\!\in\!\R$ \ssf and \ssf$\tau\!<\!\frac32$\ssf.
By applying the operator \ssf$\tilde{D}^\sigma\ssb D^\tau$
to  (\ref{wavehom}), we deduce that
\begin{equation*}
\|\,\partial_{\ssf t}\ssf\tilde{D}_x^\sigma D_x^\tau\ssf
u(t,\cdot)\ssf\|_{L_x^2}^{\ssf2}
+\|\ssf\tilde{D}_x^\sigma D_x^{\tau+1}\ssf u(t,\cdot)\ssf\|_{L_x^2}^{\ssf2}
=\|\ssf \tilde{D}^\sigma\ssb D^\tau\ssb g\ssf\|_{L^2}^{\ssf2}
+\|\ssf \tilde{D}^\sigma\ssb D^{\tau+1}\ssb f\ssf\|_{L^2}^{\ssf2}\,,
\end{equation*}
which can be rewritten in terms of Sobolev norms as follows\,:
\begin{equation}\label{conservationenergy}
\|\ssf\partial_{\ssf t}u(t,\cdot)\|_{H^{\sigma,\tau}}^{\ssf2}\ssb
+\|u(t,\cdot)\|_{H^{\sigma,\tau+1}}^{\ssf2}\ssb
=\|g\|_{H^{\sigma,\tau}}^{\ssf2}\ssb
+\|f \|_{H^{\sigma,\tau+1}}^{\ssf2}\,.
\end{equation}

\section{Kernel estimates}
\label{Kernel}

In this section we derive pointwise estimates
for the radial convolution kernel \ssf$w_{\,t}^{(\sigma,\tau)}$
of the operator \ssf$W_t^{(\sigma,\tau)}\!
=\ssb D^{-\tau}\tilde{D}^{\ssf\tau-\sigma}e^{\,i\,t\ssf D}$,
for suitable exponents  $\sigma\!\in\!\R$ and $\tau\!\in\![0,\frac{3}2)$\ssf. 
To do so, we follow the strategy used in \cite{APV2} for hyperbolic spaces. 
The difficulty here is that Damek--Ricci spaces are nonsymmetric in general,
so that some of the proofs given in \cite{APV2} do not work in this context. 

By the inversion formula of the spherical Fourier transform,
\begin{equation*}
w_{\,t}^{(\sigma, \tau)}(r)=\const
\int_{\,0}^{+\infty}\hspace{-1mm}d\lambda\;
|\mathbf{c}\hspace{.1mm}(\lambda)|^{-2}\,\lambda^{-\tau}\,
\bigl(\lambda^2\!+\ssb{\textstyle\frac{\widetilde{Q}^2}4}\bigr)^{\!\frac{\tau-\sigma}2}\,
\varphi_\lambda(r)\,e^{\ssf i\ssf t\ssf\lambda}\,.
\end{equation*}
Let us split up
\begin{equation*}\begin{aligned}
w_{\,t}^{(\sigma,\tau)}(r)
&=w_{\,t,0}^{(\sigma,\tau)}(r)+w_{\,t,\infty}^{(\sigma,\tau)}(r)\\
&=\const\int_{\,0}^{\ssf2}\!d\lambda\,
\chi_0(\lambda)\,|\mathbf{c}\hspace{.1mm}(\lambda)|^{-2}\,\lambda^{-\tau}\,
\bigl(\lambda^2\!+\ssb{\textstyle\frac{\widetilde{Q}^2}4}\bigr)^{\!\frac{\tau-\sigma}2}\,
\varphi_\lambda(r)\,e^{\ssf i\ssf t\ssf\lambda}\\
&+\const\int_{\,1}^{+\infty}\hspace{-1mm}d\lambda\,
\chi_\infty(\lambda)\,|\mathbf{c}\hspace{.1mm}(\lambda)|^{-2}
\,\lambda^{-\tau}\,\bigl(\lambda^2\!+\ssb{\textstyle\frac{\widetilde{Q}^2}4}\bigr)^{\!\frac{\tau-\sigma}2}\,
\varphi_\lambda(r)\,e^{\ssf i\ssf t\ssf\lambda}\,,
\end{aligned}\end{equation*}
using smooth cut--off functions $\chi_0$ and $\chi_\infty$ on $[0,+\infty)$
such that $1\!=\ssb\chi_0\ssb+\ssb\chi_\infty$\ssf,
$\chi_0\ssb=\!1$ on $[\ssf0,1\ssf]$ and
$\chi_\infty\!=\!1$ on $[\ssf2,+\infty)$\ssf.
We shall first estimate \ssf$w_{\,t,0}^{(\sigma,\tau)}$
and next a variant of \ssf$w_{\,t,\infty}^{(\sigma,\tau)}$.
The kernel \ssf$w_{\,t,\infty}^{(\sigma,\tau)}$ has indeed
a logarithmic singularity on the sphere \ssf$r\!=\ssb t$
\ssf when \ssf$\sigma\ssb=\ssb\frac{n+1}2$.
We bypass this problem
by considering the analytic family of operators
\begin{equation*}\textstyle
\widetilde{W}_{\,t,\infty}^{\ssf(\sigma,\tau)}
=\frac{e^{\ssf\sigma^2}}{\Gamma(\frac{n+1}2-\sigma)}\;
\chi_\infty(D)\,D^{-\tau}\,\tilde{D}^{\ssf\tau-\sigma}\,e^{\,i\,t\ssf D}
\end{equation*}
in the vertical strip \ssf$0\!\le\!\Re\sigma\!\le\!\frac{n+1}2$
\ssf and the corresponding kernels
\begin{equation}\label{ISFT}\textstyle
\widetilde{w}_{\,t,\infty}^{\ssf(\sigma,\tau)}(r)
=\frac{e^{\ssf\sigma^2}}{\Gamma(\frac{n+1}2-\sigma)}\,
{\displaystyle\int_{\,1}^{+\infty}}\hskip-1mm
d\lambda\,\chi_\infty(\lambda)\,
|\mathbf{c}\hspace{.1mm}(\lambda)|^{-2}\,\lambda^{-\tau}\,
\bigl(\lambda^2\!+\ssb{\textstyle\frac{\widetilde{Q}^2}4}\bigr)^{\!\frac{\tau-\sigma}2}\,
e^{\,i\ssf t\ssf\lambda}\,\varphi_\lambda(r)\,.
\end{equation}
Notice that the Gamma function,
which occurs naturally in the theory of Riesz distributions,
will allow us to deal with the boundary point \ssf$\sigma\!=\!\frac{n+1}2$,
while the exponential function
yields boundedness at infinity in the vertical strip.
Notice also that, once multiplied by \ssf$\chi_\infty(D)$\ssf,
the operator \ssf$D^{-\tau}\tilde{D}^{\ssf\tau-\sigma}$
\ssf behaves like \ssf$\tilde{D}^{-\sigma}$\ssf.

 \subsection{Estimate of
\,$w_{\ssf t}^{\ssf0}\ssb=\ssb w_{\,t,0}^{(\sigma,\tau)}$.}
\label{KernelEstimatewt0}

\begin{theorem}\label{Estimatewt0}
Let \,$\sigma\!\in\!\R$ and \,$\tau\!<\!2$\ssf.
The following pointwise estimates hold for the kernel
\,$w_{\ssf t}^{\ssf0}\ssb=\ssb w_{\,t,0}^{(\sigma,\tau)}:$
\begin{itemize}
\item[(i)]
Assume that \,$|t|\!\le\ssb2$\ssf.
Then, for every \,$r\ssb\ge\ssb0$\ssf,
\begin{equation*}
|\ssf w_{\ssf t}^{\ssf0}(r)|\ssf\lesssim\ssf\varphi_0(r)\ssf.
\end{equation*}
\item[(ii)]
Assume that \,$|t|\!\ge\ssb2$\ssf.
\vspace{1mm}
\begin{itemize}
\item[(a)]
If \,$0\ssb\le\ssb r\ssb\le\ssb\frac{|t|}2$\ssf, then
\begin{equation*}
|\ssf w_{\ssf t}^{\ssf0}(r)|\ssf
\lesssim\ssf|t|^{\ssf\tau-3}\,\varphi_0(r)\ssf.
\end{equation*}
\item[(b)]
If \,$r\ssb\ge\ssb\frac{|t|}2$\ssf, then
\begin{equation*}
|\ssf w_{\ssf t}^{\ssf0}(r)|\ssf\lesssim\ssf
(\ssf1\!+\ssb|\ssf r\!-\!|t|\ssf|\ssf)^{\ssf\tau-2}\,e^{-\frac{Q}2\ssf r}\ssf.
\end{equation*}
\end{itemize}
\end{itemize}
\end{theorem}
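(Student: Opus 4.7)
The plan is to analyze the oscillatory integral
\[
w_t^0(r)\,=\,\const\int_0^2 d\lambda\,\chi_0(\lambda)\,|\mathbf{c}(\lambda)|^{-2}\,\lambda^{-\tau}\,\bigl(\lambda^2\!+\!\tfrac{\widetilde Q^{2}}4\bigr)^{\frac{\tau-\sigma}2}\varphi_\lambda(r)\,e^{it\lambda}
\]
in each regime according to the relative sizes of $|t|$ and $r$, following the strategy of \cite{APV2}. The key preliminary observation is that $\mathbf c(\lambda)^{-1}=O(\lambda)$ near the origin by \eqref{cfunction}, so $|\mathbf c(\lambda)|^{-2}$ extends to an even smooth function on $\R$ with a zero of order $2$ at $0$; hence the whole amplitude factors as $\lambda^{2-\tau}q(\lambda)$ with $q\in C^\infty_c([0,2])$. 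Setting $F(\lambda,r):=q(\lambda)\varphi_\lambda(r)$ and combining $\varphi_\lambda=\pi(a(\cdot)^{-i\lambda+Q/2})$ with \eqref{loga} and \eqref{phi0} gives the uniform derivative bound $|\partial_\lambda^{k}F(\lambda,r)|\lesssim(1+r)^k\,\varphi_0(r)$ for $\lambda\in[0,2]$. Case (i), $|t|\le 2$, is then immediate by modulus estimation, since $\lambda^{2-\tau}$ is integrable on $[0,2]$ because $\tau<2$.

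For case (ii)(a), $|t|\ge 2$ and $r\le|t|/2$, I invoke the standard asymptotic expansion of a Fourier integral with fractional endpoint singularity: for $\alpha>-1$ and $\phi\in C^\infty_c$,
\[
\int_{\,0}^{+\infty}\!e^{it\lambda}\lambda^{\alpha}\phi(\lambda)\,d\lambda
\,=\,\sum_{j=0}^{N-1}\tfrac{\phi^{(j)}(0)}{j!}\,\Gamma(\alpha\!+\!j\!+\!1)\,e^{i\pi(\alpha+j+1)/2}\,|t|^{-\alpha-j-1}+E_N(t),
\]
where the error $E_N(t)$ is bounded by $|t|^{-\alpha-N-1}$ times a finite seminorm of $\phi$ involving derivatives of order at most $N$. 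Applied with $\phi=F(\cdot,r)$ and $\alpha=2-\tau$, the $j$-th term is dominated by $(1+r)^j\varphi_0(r)\,|t|^{\tau-3-j}$ and the remainder by $(1+r)^N\varphi_0(r)\,|t|^{\tau-3-N}$; under the hypothesis $r\le|t|/2$ the factors $(1+r)^{j}\le|t|^{j}$ are absorbed, producing uniformly the desired bound $|t|^{\tau-3}\varphi_0(r)$.

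For case (ii)(b), $|t|\ge 2$ and $r\ge|t|/2$, the spherical function itself oscillates in $r$ and provides additional cancellation. I substitute the Harish--Chandra decomposition $\varphi_\lambda=\mathbf c(\lambda)\Phi_\lambda+\mathbf c(-\lambda)\Phi_{-\lambda}$ together with \eqref{expPhilambda} and the identity $\mathbf c(-\lambda)=\overline{\mathbf c(\lambda)}$ for real $\lambda$, giving
\[
|\mathbf c(\lambda)|^{-2}\varphi_\lambda(r)\,=\,2^{-k/2}V(r)^{-1/2}\sum_{\ell\ge 0}\Bigl\{\tfrac{\Gamma_\ell(\lambda)}{\mathbf c(-\lambda)}\,e^{(i\lambda-\ell)r}+\tfrac{\Gamma_\ell(-\lambda)}{\mathbf c(\lambda)}\,e^{(-i\lambda-\ell)r}\Bigr\}.
\]
Since $1/\mathbf c(\pm\lambda)=O(\lambda)$ near $0$, the $\ell$-th contribution to $w_t^0(r)$ reduces to $V(r)^{-1/2}e^{-\ell r}\!\int_0^2 e^{i\lambda(t\pm r)}\lambda^{1-\tau}h_\ell^\pm(\lambda)\,d\lambda$, where $h_\ell^\pm$ is smooth with all $\lambda$-derivatives of order $O(\ell^d)$ by Lemma~\ref{l: derGammaelle}. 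Applying the same asymptotic expansion as in (ii)(a), but now with phase $t\pm r$ in place of $t$, bounds each such integral by $(1+|t\pm r|)^{\tau-2}$. The sum over $\ell$ converges thanks to the $e^{-\ell r}$ damping (using $r\ge|t|/2\ge 1$), and under $r\ge|t|/2$ one has $|t+r|\ge|t-r|$, so that $(1+|t-r|)^{\tau-2}$ dominates; combining with $V(r)^{-1/2}\lesssim e^{-Qr/2}$ yields the stated bound.

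The principal technical difficulty is twofold: first, the non-smoothness of $\lambda^{-\tau}$ at the origin for non-integer $\tau$, bypassed by absorbing it into the factor $\lambda^{2-\tau}$ furnished by the second-order vanishing of $|\mathbf c(\lambda)|^{-2}$; second, the uniform control in $\ell$ of the $\lambda$-derivatives of the expansion coefficients $\Gamma_\ell(\lambda)$ required in case (ii)(b), which is precisely what Lemma~\ref{l: derGammaelle} provides and constitutes the main new ingredient with respect to the hyperbolic case treated in \cite{APV2}.
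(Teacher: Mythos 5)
Your parts (i) and (ii)(b) are essentially the paper's own argument: (i) is the same modulus estimate via \eqref{philphi0} and \eqref{HCest}, and (ii)(b) is exactly the paper's reduction through \eqref{expPhilambda}, with $\mathbf c(\mp\lambda)^{-1}\Gamma_\ell(\pm\lambda)$ controlled by Lemma \ref{l: derGammaelle}, Lemma A.1 applied with phases $t\pm r$, and summation in $\ell$ thanks to $e^{-\ell r}$ and $V(r)^{-1/2}\!\asymp e^{-Qr/2}$. These steps are fine.

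Part (ii)(a), however, contains a genuine gap. You treat $\varphi_\lambda(r)$ as part of the amplitude and invoke an endpoint (Erd\'elyi--type) expansion whose error after $N$ terms is claimed to be $\mathrm O\bigl(|t|^{-\alpha-N-1}\bigr)$ with constants involving only derivatives of $\phi$ of order at most $N$. No such estimate holds: to reach the decay $|t|^{-\alpha-N-1}$ one must pay roughly $N+\alpha+1$ derivatives of the amplitude, and with sup--norm control of derivatives alone one cannot even get the base decay $|t|^{-\alpha-1}$ uniformly. Indeed, an amplitude such as $\phi(\lambda)=\eta(\lambda)\,e^{\,i\ssf r\ssf(2-\lambda)^2}$ satisfies your bounds $|\partial_\lambda^{\ssf k}\phi|\lesssim(1+r)^k$, yet for $r\ssb\asymp\ssb t/2$ the total phase $t\lambda+r(2-\lambda)^2$ has an interior stationary point and
\begin{equation*}
\Bigl|\int_0^2 e^{\ssf i\ssf t\lambda}\,\lambda^{\alpha}\,\phi(\lambda)\,d\lambda\Bigr|\;\asymp\;|t|^{-\frac12}\,,
\end{equation*}
which is far larger than $|t|^{-\alpha-1}=|t|^{\tau-3}$. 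Consequently your absorption argument, in which each derivative costs a factor $(1+r)\le|t|$ that must be exactly compensated by one extra power of $|t|^{-1}$ in the error, cannot be closed in the full range $r\le|t|/2$ (it would work for, say, $r\lesssim|t|^{\theta}$ with $\theta<1$, but not up to $r\sim|t|$). The point is that one must use the precise oscillatory structure of $\varphi_\lambda(r)$, not just size bounds on its $\lambda$--derivatives: this is what the paper does by inserting $\varphi_\lambda=\pi\bigl(a(\cdot)^{-i\lambda+Q/2}\bigr)$ and exchanging the order of integration, so that for each $\sigma\in\partial B(\mathfrak s)$ the $\lambda$--oscillation is the exact linear phase $e^{\ssf i\{t-\log a(r\sigma)\}\lambda}$ with $|\log a(r\sigma)|\le r$ by \eqref{loga}; the $\lambda$--integral then has an $r$--independent compactly supported symbol of order $2-\tau$, Lemma A.1 gives the bound $\{t-\log a(r\sigma)\}^{\tau-3}\lesssim t^{\ssf\tau-3}$, and the remaining sphere integral of $a(r\sigma)^{Q/2}$ reconstitutes $\varphi_0(r)$ via $\pi\bigl[a(\cdot)^{Q/2}\bigr]=\varphi_0$. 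You should replace your (ii)(a) by this argument (or by an equivalent one that keeps the phase $t-\log a(r\sigma)$ intact).
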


\begin{proof}
Recall that
\begin{equation}\label{wt0}
w_{\ssf t}^{\ssf0}(r)=\const\int_{\,0}^{\ssf2}\!d\lambda\,
\chi_0(\lambda)\,|\mathbf{c}\hspace{.1mm}(\lambda)|^{-2}\,
\lambda^{-\tau}\,\bigl(\lambda^2\!+\ssb{\textstyle\frac{\widetilde{Q}^2}4}\bigr)^{\!\frac{\tau-\sigma}2}\,
\varphi_\lambda(r)\,e^{\ssf i\ssf t\ssf\lambda}\,.
\end{equation}
By symmetry we may assume that \ssf$t\!>\!0$\ssf.

\noindent
(i) It follows from the estimates \eqref{philphi0} and \eqref{HCest} that
\begin{equation*}
|\ssf w_{\ssf t}^{\ssf0}(r)|\,
\lesssim\int_{\,0}^{\ssf2}d\lambda\,\lambda^{2-\tau}\,\varphi_0(r)\,
\lesssim\,\varphi_0(r)\,.
\end{equation*}

\noindent
(ii) We prove first (a) by substituting the representation \eqref{radialisationformula} of $\varphi_{\lambda}$ in \eqref{wt0}. Specifically, 
$$
w_t^0(r)=\const \int_{\partial B(\mathfrak{s})} d\sigma \,a(r\sigma)^{Q/2} \int_0^2 d\lambda\,
\chi_0(\lambda)\,b(\lambda)\,e^{i\{t-\log a(r\sigma)  \}\lambda} \,,
$$
where $b(\lambda)=|\mathbf{c}\hspace{.1mm}(\lambda)|^{-2}\,
\lambda^{-\tau}\,\bigl(\lambda^2\!+\ssb{\textstyle\frac{\widetilde{Q}^2}4}\bigr)^{\!\frac{\tau-\sigma}2}\,$ and $a(r\sigma)$ is the $A$-component of the point $r\sigma$ defined in Section \ref{Notation}. According to estimate \eqref{loga} and to Lemma A.1 in Appendix A, the inner integral is bounded above by 
$$
\{t-\log a(r\sigma) \}^{\tau-3}\leq (t-r)^{\tau-3}\asymp t^{\tau-3}\,\qquad\forall\sigma\in\partial B(\mathfrak{s})\,.
$$
Since $\pi\big[a(\cdot)^{Q/2}\big]=\varphi_0$, we conclude that
\begin{equation*}
\begin{aligned}
w_t^0(r)&\lesssim t^{\tau-3}\,\int_{\partial B(\mathfrak{s})} d\sigma \,a(r\sigma)^{Q/2}\\
&= C\,t^{\tau-3}\,\pi\big[a(\cdot)^{Q/2}\big](r)\\
&=C\, t^{\tau-3}\,\varphi_0(r)\,.
\end{aligned}
\end{equation*}

We prove next (b) by substituting in \eqref{wt0}
the asymptotic expansion (\ref{expPhilambda}) of $\varphi_\lambda$
and by reducing to Fourier analysis on \ssf$\R$\ssf.
Specifically,
\begin{equation}\label{sumK}
w_{\ssf t}^{\ssf0}(r)=  \const\, e^{-\frac{Q}2\,r}\,\sum\nolimits_{\ssf \ell=0}^{+\infty}
e^{-\ssf \ell\ssf r}\,\big\{\ssf I_{\,\ell}^{+,0}(t,r)+I_{\,\ell}^{-,0}(t,r)\ssf\big\}\,,
\end{equation}
where
\begin{equation*}
I_{\,\ell}^{\pm,0}(t,r)\ssf=\ssb\int_{\,0}^{\ssf2}\!d\lambda\,
\chi_0(\lambda)\,b_{\ssf \ell}^\pm(\lambda)\,e^{\,i\ssf(t\ssf\pm\ssf r)\ssf\lambda}
\end{equation*}
and
\begin{equation*}
b_{\ssf \ell}^\pm(\lambda)
=\ssf\mathbf{c}\hspace{.1mm}(\mp\lambda)^{-1}\,\lambda^{-\tau}\,
\bigl(\lambda^2\!+\ssb{\textstyle\frac{\widetilde{Q}^2}4}\bigr)^{\!\frac{\tau-\sigma}2}\,
\Gamma_\ell(\pm\lambda)\,.
\end{equation*}
By applying Lemma A.1
and Lemma \ref{l: derGammaelle},
we obtain
\begin{equation*}
|\ssf I_{\,\ell}^{+,0}(t,r)\ssf|\ssf
\lesssim(\ssf1\!+\ssb \ell\ssf)^{\ssf d}\,(\ssf t\!+\!r\ssf)^{\ssf\tau-2}
\le(\ssf1\!+\ssb \ell\ssf)^{\ssf d}\;r^{\ssf\tau-2}
\end{equation*}
and
\begin{equation*}
|\ssf I_{\,\ell}^{-,0}(t,r)\ssf|\ssf\lesssim
(\ssf1\!+\ssb \ell\ssf)^{\ssf d}\,(\ssf1\ssb+\ssb|\ssf r\!-\!t\ssf|\ssf)^{\ssf\tau-2}\,,
\end{equation*}
where $d$ is the constant which appears in Lemma \ref{l: derGammaelle}. 

We conclude the proof by summing up these estimates in \eqref{sumK}.
\end{proof}

\subsection{Estimate of
\,$\widetilde{w}_{\,t}^{\ssf\infty}\ssb
=\ssb\widetilde{w}_{\,t,\infty}^{\ssf(\sigma,\tau)}$.}
\label{KernelEstimatewtildetinfty}

\begin{theorem}\label{Estimatewtildetinfty}
The following pointwise estimates hold for the kernel
\,$\widetilde{w}_{\,t}^{\ssf\infty}\ssb
=\ssb\widetilde{w}_{\,t,\infty}^{\ssf(\sigma,\tau)}$,
for any fixed \ssf$\tau\!\in\!\R$
and uniformly in \ssf$\sigma\!\in\!\mathbb{C}$
with \ssf$\Re\sigma\ssb=\ssb\frac{n+1}2:$
\begin{itemize}
\item[(i)]
Assume that \,$0\!<\!|t|\!\le\!2$\ssf.
\begin{itemize}
\item[(a)]
\,If \,$0\!\le\!r\!\le\!3$\ssf, then
\;$|\,\widetilde{w}_{\,t}^{\ssf\infty}(r)\ssf|\ssf
\lesssim\, 
\;|t|^{-\frac{n-1}2}\,.
 $
\item[(b)]
\,If \,$r\!\ge\!3$\ssf, then
\,$\widetilde{w}_{\,t}^{\ssf\infty}(r)
=\mathrm{O}\bigl(\ssf r^{-\infty}\,e^{-\frac{Q}2\ssf r}\ssf\bigr)$\ssf.
\end{itemize}
\vspace{1mm}
\item[(ii)]
Assume that \,$|t|\!\ge\!2$\ssf. Then
\begin{equation*}\label{wtildeinftytlarge}
|\,\widetilde{w}_{\,t}^{\ssf\infty}(r)\ssf|\,\lesssim\,
(\ssf1\ssb+\ssb|\ssf r\ssb-\ssb|t|\ssf|\ssf)^{-\infty}\,e^{-\frac{Q}2\,r}
\qquad\forall\;r\!\ge\!0\ssf.
\end{equation*}
\end{itemize} 
\end{theorem}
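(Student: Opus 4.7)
The plan is to follow closely the strategy used in \cite{APV2} for the corresponding theorem on hyperbolic spaces, adapted to the nonsymmetric Damek--Ricci setting. We split the analysis of $\widetilde w_t^\infty(r)$ into the three regimes (i)(a), (i)(b) and (ii), using two different representations of the spherical function $\varphi_\lambda$: the radialization formula \eqref{radialisationformula} when $r$ is bounded, and the Harish--Chandra expansion $\varphi_\lambda = \mathbf{c}(\lambda)\Phi_\lambda + \mathbf{c}(-\lambda)\Phi_{-\lambda}$ together with the asymptotic series \eqref{expPhilambda} when $r$ is large. Throughout, the prefactor $e^{\sigma^2}/\Gamma(\tfrac{n+1}2-\sigma)$ regularizes the Riesz--type singularity that appears at $\Re\sigma=\tfrac{n+1}2$, while simultaneously ensuring uniform control along the vertical line: by Stirling, $|\Gamma(-i\ssf\Im\sigma)|^{-1}$ grows like $|\Im\sigma|^{1/2}e^{\pi|\Im\sigma|/2}$, and this is exactly compensated by $|e^{\sigma^2}|\asymp e^{-(\Im\sigma)^2}$.

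For case (i)(a), substitute the radialization formula into \eqref{ISFT} and exchange the order of integration to obtain
$$\widetilde w_t^\infty(r)\,=\,\tfrac{e^{\sigma^2}}{\Gamma(\frac{n+1}2-\sigma)}\int_{\partial B(\mathfrak{s})}\!d\omega\,a(r\omega)^{Q/2}\,J\bigl(t-\log a(r\omega)\bigr),$$
where $J(s)=\int_1^\infty d\lambda\,\chi_\infty(\lambda)|\mathbf{c}(\lambda)|^{-2}\lambda^{-\tau}(\lambda^2+\widetilde Q^2/4)^{(\tau-\sigma)/2}e^{\ssf is\lambda}$. For $\Re\sigma=\tfrac{n+1}2$ the amplitude is of order $\lambda^{(n-3)/2}$ at infinity, so $J$ is a Riesz--type distribution whose Gamma--regularization satisfies $|J(s)|\lesssim|s|^{-(n-1)/2}$ with a constant uniform in $\sigma$. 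By \eqref{loga}, $|\log a(r\omega)|\le r\le 3$, so the singularity of $J$ is concentrated near a smooth submanifold of the unit sphere and, as in \cite{APV2}, averaging over $\partial B(\mathfrak s)$ yields $|\widetilde w_t^\infty(r)|\lesssim|t|^{-(n-1)/2}$.

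For cases (i)(b) and (ii), substitute the Harish--Chandra expansion together with \eqref{expPhilambda} into \eqref{ISFT}. As in the proof of Theorem \ref{Estimatewt0}, this produces
$$\widetilde w_t^\infty(r)\,=\,C\,e^{-Qr/2}\sum\nolimits_{\ell=0}^\infty e^{-\ell r}\bigl(\widetilde I_\ell^{+,\infty}(t,r)+\widetilde I_\ell^{-,\infty}(t,r)\bigr),$$
where $\widetilde I_\ell^{\pm,\infty}(t,r)=\int_1^\infty d\lambda\,\chi_\infty(\lambda)\,\widetilde b_\ell^\pm(\lambda)\,e^{\ssf i(t\pm r)\lambda}$ and $\widetilde b_\ell^\pm$ is the Gamma--regularized amplitude involving $\Gamma_\ell(\pm\lambda)$. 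Repeated integration by parts in $\lambda$ transfers derivatives onto $\widetilde b_\ell^\pm$; by Lemma \ref{l: derGammaelle}, $|\partial_\lambda^{\,h}\Gamma_\ell(\pm\lambda)|\lesssim\ell^d(1+|\lambda|)^{-h-1}$, and $|\mathbf{c}(\mp\lambda)^{-1}|$ together with its derivatives is polynomially controlled, yielding $|\widetilde I_\ell^{\pm,\infty}(t,r)|\lesssim(1+\ell)^{d'}(1+|t\pm r|)^{-N}$ for any $N$. Summing over $\ell$ with the weight $e^{-\ell r}$: in case (i)(b), $r\ge 3$ and $|t|\le 2$ force $|t\pm r|\asymp r$, giving rapid decay $r^{-\infty}e^{-Qr/2}$; in case (ii), $|t|\ge 2$ gives $|t+r|\ge|t|$ and $|t-r|=|r-|t||$, producing $(1+|r-|t||)^{-\infty}e^{-Qr/2}$.

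The principal difficulty is the uniform estimation in $\sigma$ along $\Re\sigma=\tfrac{n+1}2$, which is why the specific prefactor $e^{\sigma^2}/\Gamma(\tfrac{n+1}2-\sigma)$ is required. A difficulty specific to the nonsymmetric Damek--Ricci case, absent from \cite{APV2}, is the integration--by--parts argument in the series decomposition above: the crude bound \eqref{Gamma} on $\Gamma_\ell$ is insufficient, and the refined control of $|\partial_\lambda^{\,h}\Gamma_\ell|$ established in Lemma \ref{l: derGammaelle} is the key new analytical ingredient that makes the argument go through.
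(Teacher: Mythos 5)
Your outline reproduces the paper's global architecture (radialization for bounded $r$, the expansion \eqref{expPhilambda} for large $r$, the Gamma-regularized analytic family), but it misses the two points where the actual work lies. First, the light-cone region. In part (ii), for the term $\ell=0$ with the phase $e^{\,i(t-r)\lambda}$, the amplitude $b_0^-(\lambda)=\mathbf{c}(-\lambda)^{-1}\lambda^{-\tau}(\lambda^2+\widetilde Q^2/4)^{(\tau-\sigma)/2}$ is a symbol of order exactly $-1$ when $\Re\sigma=\frac{n+1}2$: since $\Gamma_0\equiv1$, Lemma \ref{l: derGammaelle} gives no extra decay, so your claimed bound $(1+|t\pm r|)^{-N}$ "for any $N$, for all $\ell$" is not obtainable by integration by parts when $|r-t|\le1$; there it amounts to uniform boundedness, and Lemma A.2(ii) only yields $\mathrm{O}\bigl(\log\frac1{|r-t|}\bigr)$ — precisely the logarithmic singularity on the sphere $r=t$ that motivated the analytic family. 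The paper removes it by extracting, via Stirling asymptotics of $\mathbf{c}(\lambda)^{-1}$, the leading term $c_0\,\lambda^{-1-i\Im\sigma}$ of $b_0^-$ and applying Lemma A.3, which gives a bound of size $|\sigma|^2/|\Im\sigma|$; the blow-up as $\Im\sigma\to0$ is then cancelled because $1/\Gamma\bigl(\frac{n+1}2-\sigma\bigr)=1/\Gamma(-i\Im\sigma)$ vanishes linearly there. Your only use of the prefactor is the large-$|\Im\sigma|$ Stirling compensation, which does not address this step at all. (A secondary point: in case (ii) you apply the expansion \eqref{expPhilambda} for all $r\ge0$, but it is only an asymptotic expansion controlled for $r$ bounded away from $0$; the paper uses it for $r\ge\frac{|t|}2\ge1$ and treats $r\le\frac{|t|}2$ by the radialization formula \eqref{radialisationformula}, where $|t-\log a(r\omega)|\ge\frac{|t|}2$ makes Lemma A.2 applicable.)

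Second, the local estimate (i)(a) is asserted rather than proved. The bound $|J(s)|\lesssim|s|^{-\frac{n-1}2}$ combined with \eqref{loga} concludes only when $r\le\frac t2$, since then $|t-\log a(r\omega)|\ge\frac t2$. In the remaining regime $\frac t2<r\le3$, putting absolute values inside the spherical average forces you to integrate the singularity $|t-\log a(r\omega)|^{-\frac{n-1}2}$, which is not integrable in one variable ($\frac{n-1}2>1$ for $n\ge4$), so the argument hinges on cancellation and on the precise behaviour of the pushforward of the surface measure under $\omega\mapsto\log a(r\omega)$ near its endpoints — nothing of this is established, and "averaging as in \cite{APV2}" is exactly the kind of step the authors warn does not transfer to the nonsymmetric setting. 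The paper instead proves (i)(a) for $r>\frac t2$ through the inverse Abel transform \eqref{inv1}--\eqref{inv2}, the expansion of $\mathcal D_1^{p}\mathcal D_2^{q}$, Lemma C.1, and once more the borderline oscillatory integrals with amplitude $\lambda^{-1-i\Im\sigma}$ handled by integration by parts and the Gamma prefactor (Appendix C). By contrast, your use of the series expansion for (i)(b) is a legitimate alternative to the paper's Abel-transform route, since there $|t\pm r|\ge1$ and even the $\ell=0$ term is controlled by Lemma A.2(i); but as it stands the proposal leaves the genuinely delicate regions — the light cone in (ii) and the region $\frac t2<r\le3$ in (i)(a) — without proof.
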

\vspace{2mm}

\noindent
\emph{Proof of Theorem \ref{Estimatewtildetinfty}.ii.}
Recall that, up to a positive constant,
\begin{equation*}\label{wtildetinfty}
\widetilde{w}_{\,t}^{\ssf\infty}(r)
={\textstyle\frac{e^{\sigma^2}}{\Gamma(\frac{n+1}2-\sigma)}}
\int_{\,1}^{+\infty}\hspace{-1mm}d\lambda\;
\chi_\infty(\lambda)\,|\mathbf{c}\hspace{.1mm}(\lambda)|^{-2}\,\lambda^{-\tau}\,
\bigl(\lambda^2\!+\ssb{\textstyle\frac{\widetilde{Q}^2}4}\bigr)^{\!\frac{\tau-\sigma}2}\,
\varphi_\lambda(r)\,e^{\ssf i\ssf t\ssf\lambda}\,.
\end{equation*}
By symmetry we may assume again that \ssf$t\!>\!0$\ssf.
If \ssf$0\!\le\!r\!\le\!\frac t2$\ssf,
we resume the proof of Theorem \ref{Estimatewt0}.ii.a,
using Lemma A.2 instead of Lemma A.1,
and estimate this way
\begin{equation}\label{estimate1wtildetinfty}
|\,\widetilde{w}_{\,t}^{\ssf\infty}(r)\ssf|\,
\lesssim\,(\ssf t\!-\!r\ssf)^{-\infty}\,\varphi_0(r)\,
\lesssim\,t^{-\infty}\,e^{-\frac{Q}2\ssf r}\,.
\end{equation} 

If \ssf$r\!\ge\!\frac t2$\ssf,
we resume the proof of Theorem \ref{Estimatewt0}.ii.b
and expand this way
\begin{equation}\label{expansionwtildetinfty}
\widetilde{w}_{\,t}^{\ssf\infty}(r)=
{\textstyle\frac{e^{\sigma^2}}{\Gamma(\frac{n+1}2-\sigma)}}\,
e^{-\frac{Q}2r}\,\sum\nolimits_{\ssf \ell=0}^{+\infty}e^{-\ell\ssf r}\ssf
\bigl\{\ssf I_{\,\ell}^{+,\infty}(t,r)+I_{\,\ell}^{-,\infty}(t,r)\ssf\bigr\}\,,
\end{equation}
where
\begin{equation*}
I_{\,\ell}^{\pm,\infty}(t,r)\ssf
=\int_{\,0}^{+\infty}\hspace{-1mm}d\lambda\,\chi_{\infty}(\lambda)\,
b_{\ssf \ell}^\pm(\lambda)\,e^{\ssf i\ssf(t\pm r)\ssf\lambda}
\end{equation*}
and
\begin{equation*}
b_{\ssf \ell}^\pm(\lambda)
=\mathbf{c}\hspace{.1mm}(\mp\lambda)^{-1}\,
\lambda^{-\tau}\,\bigl(\lambda^2\!+\ssb{\textstyle\frac{\widetilde{Q}^2}4}\bigr)^{\!\frac{\tau-\sigma}2}\,
\Gamma_{\ell}(\pm\lambda)\,.
\end{equation*}
It follows from the expression \eqref{cfunction} of the ${\bf{c}}$-function and from Lemma \ref{l: derGammaelle} that $b_{\ell}^\pm$ is a symbol of order 
$$
\nu=\begin{cases}
-1&{\rm{if}\,\,}k=0\,,\\
-2&{\rm{if}\,\,}k\in\mathbb{N}^*\,.
\end{cases}
$$
By Lemma A.2 we obtain that forall $N\in \mathbb{N}^*$, there exists a positive constant $C_N$ such that for every $\ell\in \mathbb{N}$
\begin{equation}\label{Iellepiu}
|I^{+,\infty}_{\ell}(t,r)|\leq C_N\,|\sigma|^{N}\,(1+\ell)^{d}\,(t+r)^{-N}
\leq C_N\,|\sigma|^{N}\,(1+\ell)^{d}\,r^{-N}\,,
\end{equation} 
and for every $\ell\in\mathbb{N}^*$
\begin{equation}\label{Iellemeno}
|I^{-,\infty}_{\ell}(t,r)|\leq C_N\,|\sigma|^{N}\,(1+\ell)^{d}\,(1+|r-t|)^{-N}\,,
\end{equation} 
where $d$ is the constant which appears in Lemma \ref{l: derGammaelle}. To estimate the term $I^{-,\infty}_0$ we apply Lemma A.3. To do so, we establish the asymptotic behavior of the symbol $b_0^{-}(\lambda)$, as $\lambda\rightarrow +\infty$. On one hand, by \eqref{cfunction} we have 
\begin{equation*}
\begin{aligned}
\mathbf{c}(\lambda)^{-1}
&\textstyle=\frac1{\Gamma(\frac{n}2)}\,2^{\ssf -Q+i\ssf2\lambda}\,
\frac{\Gamma(i\lambda+\frac{Q}2)\,
\Gamma(i\lambda+\frac{m}4+\frac12)}{\Gamma(i\,2\ssf\lambda)}\\
&\textstyle
=C(n,m,Q)\,\bigl(\frac{i\lambda+\frac{Q}2}{i\lambda}\bigr)^{i\lambda-\frac12}\,
\bigl(i\lambda+\frac{Q}2\bigr)^{\frac{Q}2}\,
\bigl(\frac{i\lambda+\frac{m}4+\frac12}{i\lambda}\bigr)^{i\lambda}\,
\bigl(i\lambda+\frac{m}4+\frac12\bigr)^{\frac{m}4}\,
\big\{1+\text{O}(\lambda^{-1})\big\}\\
&\textstyle
=C(n,m,Q)\,\lambda^{\frac{Q}2+\frac{m}4}
\bigl\{1+\text{O}(\lambda^{-1})\bigr\}\,,
\end{aligned}
\end{equation*}
according to Stirling's formula
\begin{equation*}\label{Stirling}
\Gamma(\xi)=\sqrt{2\ssf\pi\ssf}\,
\xi^{\ssf\xi-\frac12}\,e^{-\xi}\,
\bigl\{\ssf1\ssb+\text{O}\ssf(\ssf|\xi|^{-1})\ssf\bigr\}\,.
\end{equation*}
On the other hand,
\begin{equation*}
\lambda^{-\tau}(\lambda^2\!+\ssb{\tilde\rho}^{\ssf2})^{\frac{\tau-\sigma}2}
=\lambda^{-\sigma}\,\bigl\{\ssf1\ssb
+\text{O}\ssf(\ssf|\sigma|\ssf\lambda^{-2}\ssf)\bigr\}\,.
\end{equation*}
Since $\frac{Q}2+\frac{m}4-\Re\sigma=-1$ we get
\begin{equation*}
b_{\ssf0}^-(\lambda)
=c_{\ssf0}\,\lambda^{-1-i\ssf\Im\sigma}\ssb+r_{\ssf0}(\lambda)
\quad\text{with}\quad
|\ssf r_{\ssf0}(\lambda)|\le C\,|\sigma|\,\lambda^{-2}\,.
\end{equation*}
As announced, it follows now from Lemma A.3 that
\begin{equation}\label{I0minusinftybis}\textstyle
|\ssf I_{\,0}^{-,\infty}(t,r)\ssf|\ssf\le\,C\,\frac{|\sigma|^2}{|\Im\sigma|}
\qquad\text{if \,}|\ssf r\!-\!t\ssf|\!\le\!1\ssf.
\end{equation}
By combining \eqref{estimate1wtildetinfty},
\eqref{expansionwtildetinfty},
\eqref{Iellepiu}, \eqref{Iellemeno} and \eqref{I0minusinftybis},
we conclude that
\begin{equation*}\textstyle
|\,\widetilde{w}_{\,t}^{\ssf\infty}(r)\ssf|\,
\lesssim\,(\ssf1\ssb+\ssb|\ssf r\!-\ssb t\ssf|\ssf)^{-\infty}\,e^{-\frac{Q}2\,r}
\qquad\forall\;r\!\ge\!\frac t2\,.
\end{equation*}

\noindent 
The estimate of Theorem \ref{Estimatewtildetinfty}.i.a is of local nature and thus similar to the Euclidean case.
For the  sake of completeness, we include a proof in Appendix C.
\vspace{2mm}

\noindent
\emph{Proof of Theorem \ref{Estimatewtildetinfty}.i.b.}
Here \ssf$0\!<\!|t|\!\le2$
\ssf and \ssf$r\!\ge\!3$\ssf.
By symmetry we may assume again that \ssf$t\!>\!0$\ssf. 
Up to positive constants,
the inverse spherical Fourier transform \eqref{ISFT}
can be rewritten in the following way\,:
\begin{equation*}\textstyle
\widetilde{w}_{\,t}^{\ssf\infty}(r)\,
=\,\frac{e^{\ssf\sigma^2}}{\Gamma(\frac{n+1}2-\sigma)}\;
\mathcal{A}^{-1}g_{\ssf t}\ssf(r)\,,
\end{equation*}
where
\begin{equation*}
g_{\ssf t}(r)\ssf=\ssf2\int_{\,1}^{+\infty}\hspace{-1mm}d\lambda\;
\chi_\infty(\lambda)\,\lambda^{-\tau}\,
\bigl(\lambda^2\!+\ssb{\textstyle\frac{\widetilde{Q}^2}4}\bigr)^{\!\frac{\tau-\sigma}2}
\,e^{\ssf i\ssf t\ssf\lambda}\,\cos\lambda\ssf r\,.
\end{equation*}
Let us split up
\,$2\cos\lambda\ssf r\ssb
=\ssb e^{\ssf i\ssf\lambda\ssf r}\!+\ssb e^{-i\ssf\lambda\ssf r}$
and \,$g_{\ssf t}\ssf(r)\!=\ssb g_{\,t}^+(r)\!+\ssb g_{\,t}^-(r)$
\ssf accordingly, so that
\begin{equation*}
g_{\,t}^\pm(r)\ssf=\int_{\,1}^{+\infty}\hspace{-1mm}d\lambda\;
\chi_\infty(\lambda)\,\lambda^{-\tau}\,
\bigl(\lambda^2\!+\ssb{\textstyle\frac{\widetilde{Q}^2}4}\bigr)^{\!\frac{\tau-\sigma}2}
\,e^{\,i\ssf(t\ssf\pm\ssf r)\ssf\lambda}\,.
\end{equation*}
\smallskip
Recall that
the inversion formulae \eqref{inv1} and \eqref{inv2} of the Abel transform
involve the differential operators
\ssf$\mathcal{D}_1\!=\ssb-\ssf\frac1{\sinh r}\frac\partial{\partial r}$ \ssf
and \ssf$\mathcal{D}_2\!=\ssb-\ssf\frac1{\sinh (r/2)}\frac\partial{\partial r}$\ssf.
We shall use the fact that,
for all integers \ssf$p\!\ge\!1$ and \ssf$q\!\ge\!1$\ssf,
\begin{equation}\label{D1D2}
\mathcal{D}_1^p\,\mathcal{D}_2^q
=\sum_{j=1}^{p+q}\sum_{\ell=1}^p
\gamma_{\ell,j}^{\ssf\infty}(r)\,
\bigl({\textstyle\frac\partial{\partial r}}\bigr)^j\,,
\end{equation}
where the coefficients $\gamma_{\ell,j}^{\ssf\infty}(r)$
are linear combinations of products
\begin{equation}
\begin{aligned}
&\bigl({\textstyle\frac1{\sinh r}}\bigr)\times
\bigl({\textstyle\frac\partial{\partial r}}\bigr)^{\ell_2}
\bigl({\textstyle\frac1{\sinh r}}\bigr)
\times\,\cdots\,\times
\bigl({\textstyle\frac\partial{\partial r}}\bigr)^{\ell_m}\\
&\times \bigl({\textstyle\frac\partial{\partial r}}\bigr)^{j_1}
\bigl({\textstyle\frac1{\sinh(r/2)}}\bigr)\times
\cdots\,\bigl({\textstyle\frac\partial{\partial r}}\bigr)^{j_q}
\bigl({\textstyle\frac1{\sinh(r/2)}}\bigr),
\end{aligned}
\end{equation}
with \,$\ell_2\!+{\dots}+ \ell_p\!= p - \ell$ and 
\,$j_1\!+{\dots}+ j_q\!= q -j+ \ell$. \\Since $\frac1{\sinh r} 
=2 \sum_{\ssf h=0}^{+\infty} e^{-(2 h+1)  r}$ is $\text{O}\ssf(e^{-r})$, as well as its derivatives, we deduce that $\gamma_{\,\ell,j}^\infty(r)$ is $\text{O}\ssf(e^{-(p+q/2)\ssf r})$ as $r\!\to\!+\infty$. We shall also use the fact that 
\begin{equation*} 
\bigl({\textstyle\frac\partial{\partial r}}\bigr)^j g_{\,t}^\pm(r) 
={\displaystyle\int_{\,1}^{+\infty}}\hskip-1mm
d\lambda\;\chi_\infty(\lambda)\,
\lambda^{-\tau}\,
\bigl(\lambda^2\!+\ssb{\textstyle\frac{\widetilde{Q}^2}4}\bigr)^{\!\frac{\tau-\sigma}2}\,
(\pm\ssf i\lambda)^{j}\,
e^{\,i\ssf(t\pm r)\ssf\lambda}\,.
\end{equation*}
According to Lemma A.2,
for every $N\hspace{-1mm}\in\!\mathbb{N}^*$\ssb,
there exists \ssf$C_N\hspace{-1mm}\ge\!0$ \ssf such that
\begin{equation}\label{derg}
\bigl|\bigl({\textstyle\frac\partial{\partial r}}\bigr)^j g_{\,t}^\pm(r)\bigr|
\le\ssf C_N\,|\sigma|^N\,(\ssf r\ssb\pm\ssb t\ssf)^{-N}\ssf.
\end{equation}

\noindent$\bullet$
\,\emph{Case 1}\,:
\,Assume that \ssf$k$ \ssf is even.
\smallskip

\noindent
By the formula \eqref{inv1} we obtain that 
$$\textstyle
\ssf\widetilde{w}_{\,t}^{\ssf\infty}(r)
=\ssf\const\,\frac{e^{\ssf\sigma^2}}{\Gamma(\frac{n+1}2-\sigma)}\;
\mathcal{D}_1^{k/2}\,\mathcal{D}_2^{m/2}(g_t^+\!+\ssb g_t^-)(r)\,,
$$
which by \eqref{D1D2} and \eqref{derg} is estimated by
\begin{equation*}\textstyle
|\ssf\widetilde{w}_{\,t}^{\ssf\infty}(r)|\ssf
\leq C_N\,r^{-N}\,e^{-\frac{Q}2\ssf r}\qquad\forall N\in \mathbb{N}^*\,.
\end{equation*}

\noindent$\bullet$
\,\emph{Case 2}\,:
\,Assume that \ssf $k$ \ssf is odd.
\smallskip

\noindent
According to \eqref{D1D2} and \eqref{derg},
for every $N\!\in\!\mathbb{N}^*$,
there exists \ssf$C_N\!\ge\!0$ \ssf such that
\begin{equation*}\textstyle
\bigl|\ssf\mathcal{D}_1^{(k+1)/2}\,\mathcal{D}_2^{m/2}g_{\ssf t}(s)\bigr|\ssf
\leq C_N\,|\sigma|^N\,s^{-N}\,e^{- \frac{Q+1}2  \ssf s}
\qquad\forall\;s\!\ge\!3\,.
\end{equation*}
By estimating
\begin{equation*}\begin{aligned}
&\textstyle
\cosh s\ssb-\ssb\cosh r
=2\ssf\sinh\frac{s+r}2\ssf\sinh\frac{s-r}2
\gtrsim e^{\ssf r}\sinh\frac{s-r}2\,,\\
&\textstyle
\sinh s\lesssim e^{\ssf s}\,,
\hspace{2mm}e^{-\frac{Q}2\ssf s}\le e^{-\frac{Q}2\ssf r}\,,
\hspace{2mm}s^{-N}\le r^{-N}\,,
\end{aligned}\end{equation*}
and performing the change of variables \ssf$s\!=\!r\!+\!u$\ssf,
we deduce that 
\begin{equation*}
\begin{aligned}
|\ssf\widetilde{w}_{\,t}^{\ssf\infty}(r)\ssf|\,
&\lesssim\textstyle\,
\frac{e^{\ssf\sigma^2}}{\Gamma(\frac{n+1}2-\sigma)}\,
{\displaystyle\int_{\,r}^{+\infty}}\hspace{-1mm}ds\;
\frac{\sinh s}{\sqrt{\ssf\cosh s\,-\,\cosh r\ssf}}\;
\bigl|\ssf\mathcal{D}_1^{(k+1)/2}\,\mathcal{D}_2^{m/2}  g_{\ssf t}(s)\ssf\bigr|\\
&\le\,C_N\int_{\,r}^{+\infty}\textstyle\hspace{-1mm}ds\;
\frac{\sinh s}{\sqrt{\ssf\cosh s\,-\cosh r\ssf}}\;s^{-N}\,e^{-\frac{Q+1}2\ssf s}\\
&\le\,C_N\;r^{-N}\,e^{-\frac{Q}2\ssf r}
\int_{\,0}^{+\infty}\textstyle\hspace{-1mm}
\frac{du\vphantom{\big|}}{\sqrt{\ssf\sinh\frac u2\ssf}}\,\\
&\le\,C_N\;r^{-N}\,e^{-\frac{Q}2\ssf r}\,.
\end{aligned}
\end{equation*}
\vspace{-5mm}
\hfill$\square$

\section{Dispersive estimates}
\label{Dispersive}

In this section we obtain $L^{q'}\!\to\! L^q$ estimates for the operator
$D^{-\tau}\ssf\tilde{D}^{\ssf\tau-\sigma}\ssf e^{\,i\,t\ssf D}$,
which will be crucial for our Strichartz estimates in next section.
Let us split up its kernel
\ssf$w_t\!=\ssb w_{\ssf t}^{\ssf0}\!+\ssb w_{\,t}^\infty$
as before. We will handle the contribution of \ssf$w_{\ssf t}^{\ssf0}$,
using the pointwise estimates obtained in Subsection \ref{KernelEstimatewt0}
and the following criterion. 

\begin{lemma}\label{KS}
There exists a positive constant \,$C$ such that,
for every radial measurable function \,$\kappa$ on \,$S$,
for every \,$2\!\le\!q,\tilde{q}\!<\!\infty$ and \ssf$f\!\in\!L^{q'}\ssb(S)$,
\begin{equation*}
\|\ssf f\ssb*\ssb\kappa\,\|_{L^q\vphantom{L^{q'}}}
\le\,C\;\|f\|_{L^{\tilde{q}'}}\,\Bigl\{\ssf\int_{\,0}^{+\infty}\hspace{-1mm}dr\,
V(r)\,\varphi_0(r)^{\ssf\nu}\,|\kappa(r)|^{\ssf \alpha}\,\Bigr\}^{\frac1{\alpha}}\,.
\end{equation*}
where \,$\nu\ssb=\ssb\frac{2\ssf\min\ssf\{q,\ssf\tilde{q}\}}{q\ssf+\ssf\tilde{q}}$, $\alpha=\alpha(q,\tilde q)\ssb=\ssb\frac{q\ssf\tilde{q}}{q\ssf+\ssf\tilde{q}}$ and $V$ denotes the radial density of the measure $\mu$ 
as in \eqref{density}.  
\end{lemma}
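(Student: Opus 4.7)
\emph{Plan.} This is a Kunze--Stein--Herz inequality for radial convolvers on the solvable group $S$; I would adapt the proof of the analogous Lemma~4.3 of \cite{APV2} from real hyperbolic spaces to the Damek--Ricci setting.

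By $L^q$--$L^{q'}$ duality the estimate is equivalent to the trilinear bound
\[
\Bigl|\iint_{S\times S} f(y)\,\overline{g(x)}\,\kappa(y^{-1}x)\,d\mu(x)\,d\mu(y)\Bigr|
\ls \|f\|_{L^{\tilde{q}'}}\,\|g\|_{L^{q'}}\,\Bigl(\int_0^\infty V(r)\,\varphi_0(r)^\nu\,|\kappa(r)|^\alpha\,dr\Bigr)^{\!1/\alpha},
\]
whose exponents satisfy the Young-type relation $\tfrac1{\tilde q'}+\tfrac1\alpha=1+\tfrac1q$, since $\tfrac1\alpha=\tfrac1q+\tfrac1{\tilde q}$. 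Because $\kappa$ is radial, $\kappa(y^{-1}x)$ depends only on the Riemannian distance $d(x,y)$, and by the radial integration formula the weighted $\kappa$-integral on the right equals $\int_S\varphi_0(y^{-1}x)^\nu|\kappa(y^{-1}x)|^\alpha\,d\mu(y)$ for fixed $x$.

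The natural strategy is a weighted H\"older argument: introduce powers of $\varphi_0(y^{-1}x)$ as compensating weights on the $f$- and $g$-factors, whose total contribution to $\kappa$ equals $\varphi_0^\nu$. Applying H\"older iteratively (first in $y$ at fixed $x$ with exponents $\tilde q',\tilde q$, then in $x$ with exponents $q',q$), with weight splitting
\[
f(y)\,g(x)\,\kappa(y^{-1}x)=\bigl(f(y)\,\varphi_0(y^{-1}x)^{a}\bigr)\,\bigl(g(x)\,\varphi_0(y^{-1}x)^{b}\bigr)\,\bigl(\kappa(y^{-1}x)\,\varphi_0(y^{-1}x)^{-a-b}\bigr),
\]
converts the integral into a product of: (i) $\|f\|_{L^{\tilde q'}}$ times marginals of $\varphi_0^{a\tilde q'}$ in $y$, (ii) $\|g\|_{L^{q'}}$ times marginals of $\varphi_0^{b q'}$ in $x$, and (iii) the weighted $L^\alpha$-norm of $\kappa$ with the surviving power $\varphi_0^{-\alpha(a+b)}=\varphi_0^{\nu}$. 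Choosing $a,b\ge 0$ symmetrically, subject to the marginals being uniformly bounded, forces $a=b$ and fixes $\alpha(a+b)$ at the symmetric value $\tfrac{2\min(q,\tilde q)}{q+\tilde q}$.

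\emph{Main obstacle.} The technical heart is the uniform Schur-type estimate
\[
\int_S\varphi_0(y^{-1}x)^p\,d\mu(y)\ls 1\quad\text{uniformly in }x,\qquad p>2,
\]
which is exactly an $L^p$-integrability statement for the base spherical function. On real hyperbolic spaces \cite{APV2} obtains it via the explicit Iwasawa decomposition; in the Damek--Ricci setting there is no compact isotropy $K$, so one has to work directly with the representation $\varphi_0=\pi\bigl(a(\cdot)^{Q/2}\bigr)$ from \eqref{radialisationformula}, the Harish--Chandra-type bound $\varphi_0(r)\ls(1+r)e^{-Qr/2}$ from \eqref{phi0} and the volume density \eqref{density}, together with the left invariance of $\mu$, to verify the marginal integrability in the nonsymmetric setting. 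The interplay of these ingredients is precisely what pins down the symmetric choice $a=b$ and the sharp exponent $\nu=\tfrac{2\min(q,\tilde q)}{q+\tilde q}$.
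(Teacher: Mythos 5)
Your dualization and exponent bookkeeping ($\tfrac1{\tilde q'}+\tfrac1{q'}+\tfrac1\alpha=2$) are fine, but the core of your plan --- gaining the factor $\varphi_0^{\nu}$ through a weighted H\"older/Schur splitting --- cannot work, and your own sign constraints show why. In the factorization $f(y)\,g(x)\,\kappa=(f\varphi_0^{a})(g\varphi_0^{b})(\kappa\,\varphi_0^{-a-b})$ the weights must cancel pointwise, so to leave $\varphi_0^{-\alpha(a+b)}=\varphi_0^{+\nu}$ on the $\kappa$-factor you need $a+b=-\nu/\alpha<0$, contradicting your choice $a,b\ge0$; and as soon as $a$ (or $b$) is negative, the corresponding marginal $\int_0^{\infty}V(r)\,\varphi_0(r)^{a\tilde q'}\,dr$ diverges, since $V(r)\asymp e^{Qr}$ while $\varphi_0^{-c}$ grows. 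Conversely, with $a,b\ge0$ H\"older only bounds the trilinear form by $\|f\|_{L^{\tilde q'}}\|g\|_{L^{q'}}$ times a $\kappa$-norm carrying the \emph{growing} weight $\varphi_0^{-(a+b)\alpha}$, which is weaker than, and does not imply, the stated estimate. The obstruction is structural: the inequality with the decaying weight $\varphi_0^{\nu}$ is strictly stronger than weighted Young, and at the endpoint $q=\tilde q=2$ (where $\nu=\alpha=1$) it is exactly the Herz/Kunze--Stein criterion $\|f*\kappa\|_{L^2}\lesssim\|f\|_{L^2}\int_0^{\infty}V\,\varphi_0\,|\kappa|\,dr$ of \cite[Theorem 3.3]{ADY}, a spectral fact about radial convolution (the $L^2$ operator norm is controlled by the spherical transform) that no pointwise factorization plus H\"older can reproduce. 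Your ``main obstacle'', the uniform bound $\int_S\varphi_0(d(x,y))^{p}\,d\mu(y)\lesssim1$ for $p>2$, is true but easy on Damek--Ricci spaces (left invariance of $\mu$, symmetry of the distance, \eqref{density} and \eqref{phi0}; no Iwasawa decomposition is needed), and it is not the missing ingredient.

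The paper's proof is short and goes the other way: it quotes the Herz criterion for Damek--Ricci spaces from \cite[Theorem 3.3]{ADY} as the $L^2\to L^2$ endpoint and interpolates it with the elementary inequalities $\|f*\kappa\|_{L^q}\le\|f\|_{L^1}\|\kappa\|_{L^q}$ and $\|f*\kappa\|_{L^\infty}\le\|f\|_{L^{\tilde q'}}\|\kappa\|_{L^{\tilde q}}$; interpolation in the pair $(f,\kappa)$ then produces precisely $\alpha=\frac{q\tilde q}{q+\tilde q}$ and $\nu=\frac{2\min\{q,\tilde q\}}{q+\tilde q}$. So the nonsymmetry of $S$ is not the issue here --- the required Kunze--Stein input is already available in the Damek--Ricci literature --- but your argument omits it and cannot be repaired without it.
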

\begin{proof}
This estimate is obtained by interpolation
between the following version of the Herz criterion \cite{Her}
for Damek--Ricci spaces obtained in \cite[Theorem 3.3]{ADY}
\begin{equation*}
\|\ssf f\ssb*\ssb\kappa\,\|_{L^2}
\lesssim\;\|f\|_{L^2}\int_{\,0}^{+\infty}\hspace{-1mm}dr\,
V(r)\,\varphi_0(r)\,|\kappa(r)|\,,
\end{equation*}
and the elementary inequalities
\begin{equation*}
\|\ssf f\ssb*\ssb\kappa\,\|_{L^q\vphantom{L^{q'}}}
\le\,\|f\|_{L^1\vphantom{L^{q'}}}\,\|\kappa\|_{L^q\vphantom{L^{q'}}}\,,
\quad
\|\ssf f\ssb*\ssb\kappa\,\|_{L^\infty\vphantom{L^{q'}}}
\le\,\|f\|_{L^{\tilde{q}'}}\,\|\kappa\|_{L^{\tilde{q}}\vphantom{L^{q'}}}\,.
\end{equation*}
\end{proof}

For the second part \ssf$w_{\ssf t}^{\ssf\infty}$,
we resume the Euclidean approach,
which consists in interpolating analytically between 
$L^2\!\to\!L^2$ and $L^1\!\to\!L^\infty$ estimates
for the family of operators 
\begin{equation}\label{AnalyticFamily}\textstyle
\widetilde{W}_{\,t,\infty}^{\ssf(\sigma,\tau)}
=\,{\textstyle\frac{e^{\sigma^2}}{\Gamma(\frac{n+1}2-\sigma)}}\;
\chi_\infty(D)\,D^{-\tau}\,\tilde{D}^{\ssf\tau-\sigma}\,e^{\,i\,t\ssf D}
\end{equation}
in the vertical strip \ssf$0\ssb\le\ssb\Re\sigma\ssb\le\!\frac{n+1}2$\ssf.

\subsection{Small time dispersive estimate}
 
\begin{theorem}\label{dispersive0}
Assume that
\,$0\ssb<\ssb|t|\ssb\le\ssb2$\ssf,
\ssf$2\ssb<\ssb q\ssb<\ssb\infty$\ssf,
\ssf$0\ssb\le\ssb\tau\ssb<\ssb\frac32$
and \,$\sigma\ssb\ge\ssb(n\ssb+\!1)\ssf(\frac12\!-\!\frac1q)$\ssf.
Then, 
\begin{equation*}
\bigl\|\ssf D^{-\tau}\ssf\tilde{D}^{\ssf\tau-\sigma}\ssf e^{\,i\,t\ssf D}
\ssf\bigr\|_{L^{q'}\ssb\to L^q}\lesssim\, 
\,|t|^{-(n-1)(\frac12-\frac1q)}\,.
\end{equation*}
\end{theorem}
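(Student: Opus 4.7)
The plan is to decompose the kernel \,$w_{\ssf t}$ \,of \,$D^{-\tau}\tilde{D}^{\ssf\tau-\sigma}e^{\,i\,t\ssf D}$ \,as \,$w_{\ssf t}\ssb=\ssb w_{\ssf t}^{\ssf0}\!+\ssb w_{\ssf t}^{\ssf\infty}$ and estimate the two pieces separately. For \,$w_{\ssf t}^{\ssf0}$, I invoke the pointwise bound of Theorem \ref{Estimatewt0}(i) together with the convolution criterion of Lemma \ref{KS}. For \,$w_{\ssf t}^{\ssf\infty}$, I interpolate analytically, in the spirit of Stein, on the family \,$\widetilde{W}_{\,t,\infty}^{\ssf(\sigma,\tau)}$ of \eqref{AnalyticFamily} in the vertical strip \,$0\!\le\!\Re\sigma\!\le\!\frac{n+1}2$, between an \,$L^2\!\to\!L^2$ bound on the line \,$\Re\sigma\ssb=\ssb0$ \,and an \,$L^1\!\to\!L^\infty$ bound on the line \,$\Re\sigma\ssb=\!\frac{n+1}2$. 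It is enough to treat the endpoint \,$\sigma\ssb=\ssb(n\!+\!1)(\tfrac12\ssb-\ssb\tfrac1q)$: the general case \,$\sigma\!\ge\!(n\!+\!1)(\tfrac12\ssb-\ssb\tfrac1q)$ \,follows by factoring off the operator \,$\tilde{D}^{\ssf(n+1)(\frac12-\frac1q)-\sigma}$, which is bounded on \,$L^{q'}$ \,thanks to the Sobolev embedding of Proposition \ref{SET}.

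For the low frequency piece, Theorem \ref{Estimatewt0}(i) gives \,$|w_{\ssf t}^{\ssf0}(r)|\!\lesssim\!\varphi_0(r)$ \,uniformly for \,$|t|\!\le\!2$. Applying Lemma \ref{KS} with \,$\tilde{q}\!=\!q$, so that \,$\nu\!=\!1$ \,and \,$\alpha\!=\!q/2$, yields
\begin{equation*}
\|\ssf f\ssb*\ssb w_{\ssf t}^{\ssf0}\ssf\|_{L^q}
\ssf\lesssim\ssf\|f\|_{L^{q'}}\,
\Bigl\{{\textstyle\int_{\,0}^{+\infty}}\hspace{-1mm}dr\;
V(r)\,\varphi_0(r)^{\ssf1+q/2}\Bigr\}^{\ssf2/q}.
\end{equation*}
Using \,$V(r)\!\sim\!r^{\ssf n-1}$ \,near the origin and \,$V(r)\!\lesssim\!e^{\ssf Q\ssf r}$ \,at infinity, together with \eqref{phi0}, the integrand decays at infinity like \,$(1\!+\!r)^{\ssf1+q/2}\,e^{-Q(q-2)\ssf r/4}$, hence is integrable precisely because \,$q\!>\!2$. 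The resulting bound is independent of \,$t$, and since \,$|t|^{-(n-1)(1/2-1/q)}\!\gtrsim\!1$ \,for \,$|t|\!\le\!2$, it is absorbed into the claimed estimate.

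For the high frequency piece, on the line \,$\Re\sigma\ssb=\ssb0$ \,the Plancherel formula combined with the spectral theorem gives a uniform \,$L^2\!\to\!L^2$ bound: the spectral symbol \,$\chi_\infty(\lambda)\,\lambda^{-\tau}(\lambda^2\!+\!\widetilde{Q}^2/4)^{(\tau-\sigma)/2}$ \,is bounded since \,$\Re(\tau\!-\!\sigma)\!=\!\tau$ \,and \,$\lambda\!\ge\!1$ \,on the support of \,$\chi_\infty$, while the prefactor \,$e^{\ssf\sigma^2}/\Gamma(\frac{n+1}2\ssb-\ssb\sigma)$ \,has admissible growth in \,$\Im\sigma$ \,by Stirling's formula. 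On the line \,$\Re\sigma\ssb=\!\frac{n+1}2$, Theorem \ref{Estimatewtildetinfty}(i) yields the uniform pointwise kernel bound \,$|\widetilde{w}_{\,t}^{\ssf\infty}(r)|\!\lesssim\!|t|^{-(n-1)/2}$ \,(the decay \,$\text{O}(r^{-\infty}e^{-Qr/2})$ \,for \,$r\!\ge\!3$ \,being more than enough), hence an \,$L^1\!\to\!L^\infty$ bound of the same order. Stein's complex interpolation theorem, with parameter \,$\theta\!=\!1\!-\!\tfrac2q$, then delivers an \,$L^{q'}\!\to\!L^q$ bound of order \,$|t|^{-(n-1)(1/2-1/q)}$ \,at \,$\Re\sigma\!=\!\theta\ssf\frac{n+1}2\!=\!(n\!+\!1)(\tfrac12\ssb-\ssb\tfrac1q)$, as desired.

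The main obstacle in this scheme is ensuring that the normalizing factor \,$e^{\ssf\sigma^2}/\Gamma(\frac{n+1}2\ssb-\ssb\sigma)$ \,has the controlled growth along vertical lines required by Stein's theorem, and that both endpoint bounds are genuinely uniform in \,$\Im\sigma$: this is precisely the reason the analytic family \,$\widetilde{W}_{\,t,\infty}^{\ssf(\sigma,\tau)}$ was designed with this normalization in \S\ref{Kernel}, so the hard analytic work has already been absorbed into Theorem \ref{Estimatewtildetinfty}.
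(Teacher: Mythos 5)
Your argument is correct and is essentially the paper's own proof: the same splitting $w_t\ssb=\ssb w_{\ssf t}^{\ssf0}\!+\ssb w_{\ssf t}^{\ssf\infty}$, the same use of Lemma \ref{KS} combined with Theorem \ref{Estimatewt0}.i for the low--frequency part, and the same Stein interpolation of the analytic family \eqref{AnalyticFamily} between the $L^2\!\to\!L^2$ bound at $\Re\sigma\ssb=\ssb0$ and the $L^1\!\to\!L^\infty$ bound coming from Theorem \ref{Estimatewtildetinfty}.i at $\Re\sigma\ssb=\ssb\frac{n+1}2$. Your additional remark reducing the case $\sigma\ssb\ge\ssb(n\!+\!1)(\frac12\!-\!\frac1q)$ to the endpoint is welcome (the paper leaves this implicit), with the minor caveat that the $L^{q'}$--boundedness of $\tilde D^{-s}$ for $s\ssb\ge\ssb0$ is not literally Proposition \ref{SET} (which requires $q_1\!<\!q_2$) but the standard multiplier fact underlying the definition of the Sobolev scale.
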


\begin{proof}
We divide the proof into two parts,
corresponding to the kernel decomposition
\ssf$w_t\!=\ssb w_{\ssf t}^{\ssf0}\!+\ssb w_{\,t}^\infty$.
By applying Lemma \ref{KS}
and by using the pointwise estimates in Theorem \ref{Estimatewt0}.i,
we obtain on one hand
\begin{equation*}
\begin{aligned}
\bigl\|\ssf f\ssb*\ssb w_{\ssf t}^{\ssf0}\ssf\bigr\|_{L^q}
&\lesssim\,\Bigl\{\ssf\int_{\,0}^{+\infty}\hspace{-1mm}dr\,
V(r)\,\varphi_0(r)\,|\ssf w_{\ssf t}^{\ssf0}(r)|^{\frac q2}
\,\Bigr\}^{\frac2q}\;\|f\|_{L^{q'}}\\
&\lesssim\,\Big\{\ssf\int_{\,0}^{+\infty}\hspace{-1mm}dr\,
(1\!+\ssb r)^{1+\frac q2}\,e^{-\frac{Q}2 \,r\ssf(\frac q2-1)}
\ssf\Bigr\}^{\frac2q}\;\|f\|_{L^{q'}}\\
&\lesssim\;\|f\|_{L^{q'}}
\qquad\forall\;f\!\in\!L^{q'}.
\vphantom{\int_0^1}
\end{aligned}
\end{equation*}
For the second part,
we consider the analytic family \eqref{AnalyticFamily}.
If \ssf$\Re\sigma\ssb=\ssb0$\ssf, then
\begin{equation*}
\|\ssf f\ssb*\ssb\widetilde{w}_{\,t}^{\ssf\infty}\ssf\|_{L^2}
\lesssim\,\|f\|_{L^2}
\qquad\forall\;f\!\in\!L^2.
\end{equation*}
If \ssf$\Re\sigma\ssb=\ssb\frac{n+1}2$,
we deduce
from the pointwise estimates in Theorem \ref{Estimatewtildetinfty}.i
that
\begin{equation*}
\|\ssf f\ssb*\ssb\widetilde{w}_{\,t}^{\ssf\infty}\ssf\|_{L^\infty}
\lesssim\,|t|^{-\frac{n-1}2}\,\|f\|_{L^1}
\qquad\forall\;f\!\in\!L^1.
\end{equation*}
By interpolation we conclude
for \ssf$\sigma\ssb=\ssb(n+1)\bigl(\frac12\!-\!\frac1q\bigr)$
\ssf that
\begin{equation*}
\bigl\|\ssf f\ssb*\ssb w_{\ssf t}^\infty\ssf\|_{L^q\vphantom{L^{q'}}}
\lesssim\,|t|^{-(n-1)(\frac12-\frac1q)} \|f\|_{L^{q'}}
\qquad\forall\;f\!\in\!L^{q'}.
\end{equation*}
\end{proof}

\subsection{Large time dispersive estimate}
 
\begin{theorem}\label{dispersiveinfty}
Assume that
\,$|t|\ssb\ge\ssb2$\ssf,
\ssf$2\ssb<\ssb q\ssb<\ssb\infty$\ssf,
\ssf$0\ssb\le\ssb\tau\ssb<\ssb\frac32$
and \,$\sigma\ssb\ge\ssb(n\ssb+\!1)\ssf(\frac12\!-\!\frac1q)$\ssf.
Then
\begin{equation*}
\bigl\|\ssf D^{-\tau}\ssf\tilde{D}^{\ssf\tau-\sigma}\ssf e^{\,i\,t\ssf D}
\ssf\bigr\|_{L^{q'}\ssb\to L^q}\lesssim\,|t|^{\ssf\tau-3}\,.
\end{equation*}
\end{theorem}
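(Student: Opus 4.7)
The plan is to mirror the strategy of Theorem \ref{dispersive0}, splitting the kernel as $w_t=w_{\ssf t}^{\ssf0}+\widetilde{w}_{\,t}^{\ssf\infty}$ (absorbing the analytic factor in the infinite part as in \eqref{AnalyticFamily}) and estimating the two contributions separately, but now using the large--$|t|$ pointwise bounds of Theorems \ref{Estimatewt0}.ii and \ref{Estimatewtildetinfty}.ii.

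\textbf{Contribution of $w_{\ssf t}^{\ssf0}$.} Apply Lemma \ref{KS} with $\tilde{q}=q$, so that $\nu=1$ and $\alpha=q/2$, and split the resulting integral at $r=|t|/2$. On the inner region $0\le r\le|t|/2$, Theorem \ref{Estimatewt0}.ii.a yields
\begin{equation*}\textstyle
\int_{\,0}^{|t|/2}dr\,V(r)\,\varphi_0(r)\,|w_{\ssf t}^{\ssf0}(r)|^{q/2}
\lesssim |t|^{(\tau-3)q/2}\int_{\,0}^{+\infty}dr\,(1+r)^{1+q/2}\,e^{-\frac{Q(q-2)}{4}r}\ssf,
\end{equation*}
the latter integral being finite precisely because $q>2$; this yields the desired contribution $|t|^{\tau-3}$ after raising to the power $2/q$. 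On the outer region $r\ge|t|/2$, Theorem \ref{Estimatewt0}.ii.b combined with $V(r)\varphi_0(r)\lesssim(1+r)\,e^{Qr/2}$ and $\varphi_0(r)^{q/2}\lesssim(1+r)^{q/2}e^{-Qrq/4}$ produces an integrand with exponential factor $e^{-Qr(q-2)/4}$ restricted to $r\ge|t|/2$, hence an exponentially small remainder $\mathrm{O}(e^{-c|t|})$, absorbed in $|t|^{\tau-3}$.

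\textbf{Contribution of $\widetilde{w}_{\,t}^{\ssf\infty}$.} I would use Stein's complex interpolation along the vertical strip $0\le\Re\sigma\le\frac{n+1}{2}$ applied to the analytic family $\widetilde{W}_{\,t,\infty}^{\ssf(\sigma,\tau)}$ from \eqref{AnalyticFamily}. On $\Re\sigma=0$, Plancherel for the spherical transform reduces the $L^2\!\to\!L^2$ norm to $\sup_{\lambda\ge1}|\chi_\infty(\lambda)\lambda^{-\tau}(\lambda^2+\widetilde Q^2/4)^{(\tau-\sigma)/2}|$, which is bounded uniformly; together with the Gaussian decay of $e^{\sigma^2}$ along the imaginary axis and Stirling's estimate for $1/\Gamma$, this gives an $L^2\!\to\!L^2$ bound with admissible vertical growth. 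On $\Re\sigma=\frac{n+1}{2}$, Theorem \ref{Estimatewtildetinfty}.ii implies $|\widetilde{w}_{\,t}^{\ssf\infty}(r)|\lesssim(1+|r-|t||)^{-N}e^{-Qr/2}$ for every $N$, and a direct optimization shows $\|\widetilde{w}_{\,t}^{\ssf\infty}\|_{L^\infty}\lesssim e^{-c|t|}$ for $|t|\ge 2$. Interpolating between the two endpoints at $\sigma=(n+1)(\tfrac12-\tfrac1q)$ produces an exponentially decaying $L^{q'}\!\to\!L^q$ bound, well dominated by $|t|^{\tau-3}$.

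\textbf{Main obstacle.} The delicate point is the outer region $r\ge|t|/2$ in the estimate of $w_{\ssf t}^{\ssf0}$: one must verify that the local singularity of $(1+|r-|t||)^{\tau-2}$ near $r=|t|$ is innocuous after integration against $V(r)\varphi_0(r)\varphi_0(r)^{q/2}$ (where the required strict inequality $q>2$ is used to make the exponential factor $e^{-Qr(q-2)/4}$ decay), and to confirm that the resulting exponential smallness in $|t|$ is indeed controlled by the target polynomial $|t|^{\tau-3}$. Equally, on the $\widetilde{w}_{\,t}^{\ssf\infty}$ side, verifying the admissible vertical growth along $\Re\sigma=0$ so that Stein's interpolation theorem applies requires combining the Gaussian decay of $e^{\sigma^2}$ with the exponential decay of $1/\Gamma(\tfrac{n+1}{2}-\sigma)$ on vertical lines.
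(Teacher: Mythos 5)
Your proposal follows essentially the same route as the paper: the same decomposition of the kernel into $w_{\ssf t}^{\ssf0}$ split at $r=\frac{|t|}2$ (treated via Lemma \ref{KS} together with Theorem \ref{Estimatewt0}.ii.a and ii.b, the factor $(1+|\ssf r-|t|\ssf|)^{\tau-2}\le 1$ being harmless since $\tau<2$) plus $\widetilde{w}_{\,t}^{\ssf\infty}$, handled by Stein interpolation for the analytic family \eqref{AnalyticFamily} between $L^2\!\to\!L^2$ at $\Re\sigma=0$ and the kernel bound of Theorem \ref{Estimatewtildetinfty}.ii at $\Re\sigma=\frac{n+1}2$. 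The only slip is your claim $\|\widetilde{w}_{\,t}^{\ssf\infty}\|_{L^\infty}\lesssim e^{-c|t|}$: near $r=0$ the bound of Theorem \ref{Estimatewtildetinfty}.ii only gives $(1+|t|)^{-N}$, so the correct statement is $\|\widetilde{w}_{\,t}^{\ssf\infty}\|_{L^\infty}\lesssim |t|^{-N}$ for every $N$ (as in the paper), which still dominates $|t|^{\ssf\tau-3}$ after interpolation, so the conclusion is unaffected.
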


\begin{proof}
We divide the proof into three parts,
corresponding to the kernel decomposition
\begin{equation*}
w_t=\1_{\ssf B\big(0,\frac{|t|}2\big)}\ssf w_{\ssf t}^{\ssf0}
+\1_{\,S\smallsetminus\ssf B\big(0,\frac{|t|}2\big)}\ssf w_{\ssf t}^{\ssf0}
+\ssf w_{\,t}^\infty\ssf.
\end{equation*}

\noindent
\emph{Estimate 1}\,:
By applying Lemma \ref{KS}
and using the pointwise estimates in Theorem \ref{Estimatewt0}.ii.a, 
we obtain
\begin{equation*}
\begin{aligned}
\|\ssf f*\{\1_{\ssf B\big(0,\frac{|t|}2\big)}\ssf w_{\ssf t}^{\ssf0}\ssf\}\,\|_{L^q}
&\lesssim\,\Bigl\{\ssf\int_{\,0}^{\frac{|t|}2}\!dr\,
V(r)\,\varphi_0(r)\,|\ssf w_{\ssf t}^{\ssf0}(r)|^{\frac q2}
\,\Bigr\}^{\frac2q} \;\|f\|_{L^{q'}}\\
&\lesssim\;\underbrace{\Bigl\{\ssf\int_{\,0}^{+\infty}\hspace{-1mm}dr\,
(1\!+\ssb r)^{1+\frac q2}\,e^{-\frac{Q}2\,r\ssf(\frac q2-1)}
\ssf\Bigr\}^{\frac2q}}_{<+\infty}\,
|t|^{\ssf\tau-3}\;\|f\|_{L^{q'}}
\qquad\forall\;f\!\in\!L^{q'}.
\end{aligned}
\end{equation*}

\noindent
\emph{Estimate 2}\,:
By applying Lemma \ref{KS}
and using the pointwise estimates in Theorem \ref{Estimatewt0}.ii.b, 
we obtain
\begin{equation*}
\begin{aligned}
\|\ssf f*\{\1_{\ssf S\smallsetminus\ssf B\big(0,\frac{|t|}2\big)}\ssf
w_{\ssf t}^{\ssf0}\ssf\}\,\|_{L^q}
&\lesssim\,\Bigl\{\ssf\int_{\,\frac{|t|}2}^{+\infty}\hspace{-1mm}dr\,
V(r)\,\varphi_0(r)^{\frac2q}\,|\ssf w_{\ssf t}^{\ssf0}(r)|^{\frac q2}
\,\Bigr\}^{\frac2q}\;\|f\|_{L^{q'}}\\
&\lesssim\,\underbrace{
\Bigl\{\ssf\int_{\,\frac{|t|}2}^{+\infty}\hspace{-1mm}dr\,
r\,e^{-(\frac q2-1)\ssf\frac{Q}2\ssf r}\,\Bigr\}^{\frac2q}
}_{\lesssim\;|t|^{-\infty}}\,
\|f\|_{L^{q'}}
\qquad\forall\;f\!\in\!L^{q'}.
\end{aligned}
\end{equation*}

\noindent
\emph{Estimate 3}\,:
In order to estimate
the \ssf$L^{q'}\hspace{-1mm}\rightarrow\!L^q$ norm
of \ssf$f\ssb\mapsto\ssb f\ssb*\ssb w_{\,t}^{\ssf\infty}$, 
we use interpolation
for the analytic family \eqref{AnalyticFamily}.
If \ssf$\Re\sigma\ssb=\ssb0\ssf$, then
\begin{equation*}
\|\ssf f*\widetilde{w}_{\,t}^{\ssf\infty}\ssf\|_{L^2}
\lesssim\,\|f\|_{L^2}
\qquad\forall\;f\!\in\!L^2.
\end{equation*}
If \ssf$\Re\sigma\ssb=\ssb\frac{n+1}2$,
we deduce from Theorem \ref{Estimatewtildetinfty}.ii that
\begin{equation*}
\|\ssf f*\widetilde{w}_{\,t}^{\ssf\infty}\ssf\|_{L^\infty}
\lesssim\,|t|^{-\infty} \,\|f\|_{L^1}
\qquad\forall\;f\!\in\!L^1.
\end{equation*}
By interpolation we conclude for
\ssf$\sigma\ssb=\ssb(n\!+\!1)\bigl(\frac12\!-\!\frac1q\bigr)$
that
\begin{equation*}
\bigl\|\ssf f\ssb*\ssb w_{\ssf t}^\infty\ssf\|_{L^q\vphantom{L^{q'}}}
\lesssim\,|t|^{-\infty} \,\|f\|_{L^{q'}}
\qquad\forall\;f\!\in\!L^{q'}.
\end{equation*}
\end{proof}

By taking \ssf$\tau\!=\!1$
\ssf in Theorems \ref{dispersive0} and \ref{dispersiveinfty},
we obtain in particular the following dispersive estimates.

\begin{corollary}\label{DispersiveGlobal}
Let \,$2\!<\!q\!<\!\infty$
and \,$\sigma\!\ge\!(n\!+\!1)\bigl(\frac12\!-\!\frac1q\bigr)$.
Then
\begin{equation*}\textstyle
\|\,\tilde{D}^{-\sigma+1}\,\frac{e^{\,i\ssf t\ssf D}}D\,\|_{L^{q'}\!\to L^q}
\lesssim\,\begin{cases}
\;|t|^{-(n-1)(\frac12-\frac1q)}
&\text{if \;}0\!<\!|t|\!\le\!2\ssf,\\
\;|t|^{-2}
&\text{if \;}|t|\!\ge\!2\ssf\,.
\end{cases}
\end{equation*}
\end{corollary}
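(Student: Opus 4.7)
The plan is straightforward, since Corollary \ref{DispersiveGlobal} is a direct specialization of Theorems \ref{dispersive0} and \ref{dispersiveinfty}. First I would rewrite
\begin{equation*}
\tilde{D}^{-\sigma+1}\,\frac{e^{\,i\,t\,D}}{D}\,=\,D^{-1}\,\tilde{D}^{\,1-\sigma}\,e^{\,i\,t\,D}\,,
\end{equation*}
so as to cast the operator of interest in the form $D^{-\tau}\,\tilde{D}^{\,\tau-\sigma}\,e^{\,i\,t\,D}$ with the particular parameter choice $\tau=1$. This value lies strictly inside the admissible range $0\le\tau<\frac{3}{2}$ required by both theorems, and the hypothesis $\sigma\ge(n+1)\bigl(\frac{1}{2}-\frac{1}{q}\bigr)$ matches verbatim the hypothesis of the two source theorems.

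Next, for the small-time regime $0<|t|\le 2$, I would invoke Theorem \ref{dispersive0} with $\tau=1$. Its conclusion is independent of $\tau$ in that regime, so it delivers exactly the bound $|t|^{-(n-1)(\frac{1}{2}-\frac{1}{q})}$ demanded by the corollary. For the complementary regime $|t|\ge 2$, I would invoke Theorem \ref{dispersiveinfty}: the bound $|t|^{\tau-3}$ supplied by that theorem collapses to $|t|^{-2}$ upon substituting $\tau=1$, which is precisely the large-time bound asserted in the corollary.

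Concatenating the two regimes produces the piecewise estimate of Corollary \ref{DispersiveGlobal}. There is no genuine obstacle: all the analytic effort---the pointwise kernel estimates in Section \ref{Kernel}, the Herz-type criterion of Lemma \ref{KS}, and the complex interpolation along the analytic family \eqref{AnalyticFamily} carried out in the proofs of Theorems \ref{dispersive0} and \ref{dispersiveinfty}---has already been done. What remains is merely the bookkeeping of specializing to $\tau=1$, observing that this is the unique choice that transforms the prefactor $D^{-\tau}\,\tilde{D}^{\,\tau-\sigma}$ into the shape $\tilde{D}^{-\sigma+1}D^{-1}$ appearing in the corollary while simultaneously producing the critical exponent $\tau-3=-2$ that controls the large-time decay.
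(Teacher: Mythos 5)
Your proposal is correct and coincides with the paper's own argument: Corollary \ref{DispersiveGlobal} is obtained precisely by taking $\tau=1$ in Theorems \ref{dispersive0} and \ref{dispersiveinfty}, noting that $D^{-1}\tilde{D}^{\,1-\sigma}e^{\,i\ssf t\ssf D}$ is the operator in the corollary and that $|t|^{\tau-3}=|t|^{-2}$. Nothing further is needed.
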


%By applying Lemma \ref{KS} in full generality,
%we obtain the following decoupled estimate for
%\ssf$W_{\ssf t}^{\ssf0}\ssb=\ssb W_{\ssf t,0}^{\ssf(\sigma,\tau)}$.

%\begin{prop}\label{decoupledwt0}
%Let \,$2\!<\!q,\tilde{q}\!<\!\infty$\ssf,
%\ssf$0\!\le\!\tau\!<\!\frac32$
%\,and \ssf$\sigma\!\in\!\R$\ssf.
%Then
%\begin{equation*}
%\|\ssf W_{\ssf t}^{\ssf0}\ssf\|_{L^{\tilde{q}'}\ssb\to L^q}
%\lesssim\,(\ssf1\ssb+\ssb|t|\ssf)^{\ssf\tau-3}
%\qquad\forall\;t\!\in\!\R\ssf.
%\end{equation*}
%\end{prop}

\section{Strichartz estimates}
\label{Strichartz}

Consider the inhomogeneous linear wave equation on \ssf$S$\,:
\begin{equation}\label{IP}
\begin{cases}
&\partial_{\ssf t}^{\ssf2}u(t,x)-\bigl(\Delta_S\ssb+\ssb\frac{Q^2}4\bigr)\ssf u(t,x)
=F(t,x)\\
&u(0,x)=f(x)\\
&\partial_{\ssf t}|_{t=0}\,u(t,x)=g(x)\,,
\end{cases}
\end{equation}
whose solution is given by Duhamel's formula\,:
\begin{equation*}\textstyle
u(t,x)=(\cos t\ssf D_x)\ssf f\ssf(x)
+\frac{\sin t\ssf D_x}{D_x}\ssf g\ssf(x)
+{\displaystyle\int_{\,0}^{\ssf t}}ds\,
\frac{\sin\ssf(t-s)\ssf D_x}{D_x}\ssf F(s,x)\,.
\end{equation*}

\begin{definition}\label{admissibility}
A couple $(p,q)$ is called {\emph{admissible}}
if $\bigl(\frac1p,\frac1q\bigr)$ belongs to the triangle
\begin{equation}\label{triangle}\textstyle
T_n=\bigl\{\ssf
\bigl(\frac1p,\frac1q\bigr)\!\in\!\bigl(0,\frac12\bigr]\!\times\!\bigl(0,\frac12\bigr)
\ssb\bigm|\frac2p\!+\!\frac{n-1}q\!\ge\!\frac{n-1}2\,\bigr\}\,.
\end{equation}
\end{definition}

From the dispersive estimates obtained above
and by arguing as in the proof of Theorem \cite[6.3]{APV2} we 
obtain the following result. 

\begin{theorem}\label{StrichartzEstimates}
Let \,$(p,q)$ and \,$(\tilde p, \tilde q)$ be two admissible couples.
Then the following Strichartz estimate holds
for solutions to the Cauchy problem \eqref{IP}\,{\rm :}
\begin{equation}\label{Str1}
\|u\|_{L^p(\R\ssf;\ssf L^q)\vphantom{\big|}}
\lesssim\;\|f\|_{H^{\sigma-\frac12,\frac12}}
+\,\|g\|_{H^{\sigma-\frac12,-\frac12}}
+\,\|F\|_{L^{\tilde{p}'}\ssb\bigl(\R\ssf;\ssf
H_{\tilde{q}'}^{\sigma+\tilde{\sigma}-1}\bigr)}\ssf,
\end{equation}
where \,$\sigma\ssb\ge\ssb\frac{(n+1)}2\ssf\big(\frac12\!-\!\frac1q\big)$
and  \,$\tilde{\sigma}\ssb\ge\ssb
\frac{(n+1)}2\ssf\big(\frac12\!-\!\frac1{\tilde{q}}\big)$.
Moreover,
\begin{equation}\label{Str2}\begin{aligned}
&\|u\|_{L^{\infty}\bigl(\R\ssf;\ssf
H^{\sigma-\frac12,\frac12}\bigr)}
+\|\ssf\partial_{\ssf t}\ssf u\ssf\|_{L^{\infty}\bigl(\R\ssf;\ssf
H^{\sigma-\frac12,-\frac12}\bigr)}\\
&\lesssim\,\|f\|_{H^{\sigma-\frac12,\frac12}\vphantom{\big|}}
+\,\|g\|_{H^{\sigma-\frac12,-\frac12}\vphantom{\big|}}
+\,\|F\|_{L^{\tilde{p}'}\ssb\bigl(\R\ssf;\ssf
H_{\tilde{q}'}^{\sigma+\tilde{\sigma}-1}\bigr)}\,.
\end{aligned}\end{equation}
\end{theorem}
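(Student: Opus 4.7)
The plan is to follow the abstract Strichartz machinery of Keel--Tao, fed by the dispersive bound of Corollary \ref{DispersiveGlobal} and supplemented by the conservation identity \eqref{conservationenergy} together with Proposition \ref{SET}. Since Duhamel's formula writes the solution of \eqref{IP} as a linear combination of half--wave propagators $e^{\pm i\ssf t\ssf D}$ applied to $f$, to $D^{-1}g$, and to a retarded integral of $F$, the whole statement reduces to a bound of the form $\|\ssf e^{\ssf i\ssf t\ssf D}h\ssf\|_{L^p(\R;\ssf L^q)}\lesssim\|h\|_{L^2}$ together with its dual and its retarded variant on the appropriate Sobolev scale.

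First I would prove the homogeneous estimate by the standard $TT^*$ argument. Set $Th(t,x)=\tilde D^{-\sigma+1/2}D^{-1/2}e^{\ssf i\ssf t\ssf D}h(x)$, so that $\|Th\|_{L^p(\R;\ssf L^q)}\lesssim\|h\|_{L^2}$ is exactly the homogeneous part of \eqref{Str1}. The kernel of $TT^*$ is $\tilde D^{-2\sigma+1}D^{-1}e^{\ssf i(t-s)\ssf D}$, whose $L^{q'}\!\to\!L^q$ norm is controlled by Corollary \ref{DispersiveGlobal} (applied with $2\sigma$ in place of $\sigma$): under the hypothesis $\sigma\!\ge\!\frac{n+1}{2}(\frac12\!-\!\frac1q)$, it is $\lesssim|t-s|^{-(n-1)(\frac12-\frac1q)}$ for $|t-s|\!\le\!2$ and $\lesssim|t-s|^{-2}$ for $|t-s|\!\ge\!2$. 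Feeding the short--time decay into the Hardy--Littlewood--Sobolev inequality on $\R$ yields Strichartz for all pairs on the Euclidean wave line $\frac{2}{p}+\frac{n-1}{q}=\frac{n-1}{2}$; the integrable large--time decay $|t-s|^{-2}$ then extends the range to the entire triangle $T_n$ by a direct Young--type convolution bound in $t$. The inhomogeneous estimate with $(\tilde p,\tilde q)\neq(p,q)$ follows by composing the homogeneous bound with its dual and applying the Christ--Kiselev lemma to replace the untruncated time integral by $\int_0^t$; the hypothesis $\tilde p'<p$ required for Christ--Kiselev holds automatically in the interior of $T_n$, and the diagonal endpoint case is recovered directly from \eqref{conservationenergy}.

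The bound \eqref{Str2} is then almost immediate: the homogeneous part of $u$ and of $\partial_t u$ is controlled in $L^\infty_t(H^{\sigma-1/2,\pm 1/2})$ by applying $\tilde D^{\sigma-1/2}D^{\pm 1/2}$ to \eqref{wavehom} and invoking \eqref{conservationenergy}, while the Duhamel contribution is absorbed by combining Minkowski's inequality in time with Proposition \ref{SET} to convert between the norms $H_{\tilde q'}^{\sigma+\tilde\sigma-1}$ and $H^{\sigma-1/2,\pm 1/2}$. The main technical nuisance throughout is the bookkeeping between the two Sobolev scales $H^{\sigma,\tau}(S)$ and $H_q^\sigma(S)$, together with the verification that the exponents of $\tilde D$ versus $D$ produced by the $TT^*$ composition match those controlled by Corollary \ref{DispersiveGlobal}. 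The piecewise nature of the dispersive decay forces a splitting of the time integrals at $|t-s|=2$, but since the large--time rate $|t-s|^{-2}$ strictly dominates what is needed for admissibility throughout $T_n$, this splitting only costs a harmless absolute constant.
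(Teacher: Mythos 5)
Your overall architecture --- the $TT^*$ argument for $\tilde D^{-\sigma+\frac12}D^{-\frac12}e^{\,i\ssf t\ssf D}$, the dispersive bound of Corollary \ref{DispersiveGlobal} applied with $2\sigma$ in place of $\sigma$, Hardy--Littlewood--Sobolev or Young in the time variable (splitting at $|t-s|=2$ and using the integrable tail $|t-s|^{-2}$ to reach the interior of $T_n$), and Christ--Kiselev for the retarded term --- is exactly the route intended here: the paper's proof consists of invoking the argument of \cite{APV2}, which is this standard machinery. However, three of the specific steps you describe would fail as written. First, for \eqref{Str2} the Duhamel contribution cannot be handled by ``Minkowski in time plus Proposition \ref{SET}'': a fixed-time bound $\bigl\|\frac{\sin((t-s)D)}{D}F(s)\bigr\|_{H^{\sigma-\frac12,\frac12}}\lesssim\|F(s)\|_{H^{\sigma+\tilde\sigma-1}_{\tilde q'}}$ would require the embedding $H^{\sigma+\tilde\sigma-1}_{\tilde q'}\subset H_2^{\,\sigma-1}$, i.e. $\tilde\sigma\ge n\bigl(\frac12-\frac1{\tilde q}\bigr)$, whereas the theorem only assumes $\tilde\sigma\ge\frac{n+1}2\bigl(\frac12-\frac1{\tilde q}\bigr)$, which is strictly weaker. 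The correct (and standard) argument is to pull $e^{\pm i\ssf t\ssf D}$ out of the Duhamel integral and apply the \emph{dual} homogeneous Strichartz estimate to $\mathbf{1}_{[0,t]}F$; this gives the $L^\infty_t$ bound uniformly in $t$ with no loss and no Christ--Kiselev step.

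Second, the case $p=\tilde p=2$, where $\tilde p'=p$ and Christ--Kiselev is unavailable, is not ``recovered directly from \eqref{conservationenergy}'': energy conservation only yields $L^\infty_t$ bounds on the $L^2$ Sobolev scale and says nothing about $\|u\|_{L^2_tL^q_x}$ with $q>2$ in terms of $\|F\|_{L^{2}_t H^{\sigma+\tilde\sigma-1}_{\tilde q'}}$. That double-endpoint retarded estimate must come from the Keel--Tao endpoint (bilinear) machinery or a direct kernel argument exploiting the $|t-s|^{-2}$ tail. Third, and relatedly, the corner $\bigl(\frac1p,\frac1q\bigr)=\bigl(\frac12,\frac{n-3}{2(n-1)}\bigr)$ does belong to $T_n$, and there $(n-1)\bigl(\frac12-\frac1q\bigr)=1$, so Hardy--Littlewood--Sobolev is inapplicable; moreover the abstract Keel--Tao theorem cannot be quoted as a black box, since on $S$ one has no $L^1\!\to\!L^\infty$ dispersive bound, only the $L^{q'}\!\to\!L^q$ estimates of Theorems \ref{dispersive0} and \ref{dispersiveinfty}, so the endpoint must be run through their bilinear interpolation scheme adapted to these fixed-$q$ bounds (as is done in \cite{APV2}). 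With these three repairs your proof goes through and coincides with the one the paper refers to.
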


\begin{remark}\label{StrichartzI}
Observe that, in the statement of Theorem \ref{StrichartzEstimates},
we may replace \,$\R$ by any time interval \,$I$ containing \,$0$\ssf.
\end{remark}

\section{GWP results for the NLW equation on $S$}
\label{GWP}
 
We apply Strichartz estimates
for the inhomogeneous linear Cauchy problem associated with the wave equation
to prove global well--posedness results for the following nonlinear  Cauchy problem
\begin{equation}\label{NLWhyperbolic}
\begin{cases}
& \partial_{\,t}^{\ssf 2} u(t,x) -\bigl(\Delta_S\ssb+\ssb\frac{Q^2}4\bigr)\,u(t,x) = F(u(t,x))\,\\
& u(0,x) = f(x)\,\\
& \partial_t|_{t=0}\,u(t,x) = g(x)\,,
\end{cases}
\end{equation}
with a power--like nonlinearity \ssf$F(u)$.
By this we mean that
\begin{equation}\label{power}
|F(u)|\le C\,|u|^\gamma
\quad\text{and}\quad
|\ssf F(u)\ssb-\ssb F(v)\ssf|\ssf\le\ssf
C\,(\ssf|u|^{\gamma-1}\ssb+\ssb|v|^{\gamma-1}\ssf)\,|\ssf u\ssb-\ssb v\ssf|
\end{equation}
for some \ssf$C\!\ge\!0$ \ssf and \ssf$\gamma\!>\!1$\ssf.
Let us recall the definition of global well--posedness.

\begin{definition}
The Cauchy problem \eqref{NLWhyperbolic} is \ssf{\rm globally well--posed}
in \,$H^{\sigma,\tau}\!\times\ssb H^{\sigma,\tau-1}$
if, for any bounded subset \ssf$B$ of
\,$H^{\sigma,\tau}\!\times\ssb H^{\sigma,\tau-1}$,
there exist a Banach space \ssf$X$,
continuously embedded into
\,$C\ssf(\ssf\mathbb{R}\ssf;H^{s,\tau})\cap
C^1(\ssf\mathbb{R}\ssf;H^{s,\tau-1})$\ssf,
such that
\newline
$\bullet$
\,for any initial data \ssf$(f,g)\!\in\!B$, 
$\eqref{NLWhyperbolic}$ has a unique solution \ssf$u\!\in\!X$;
\newline
$\bullet$
\,the map \ssf$(f,g)\ssb\mapsto\ssb u$
is continuous from \ssf$B$ into \ssf$X$.
\end{definition}

The amount of smoothness \ssf$\sigma$ \ssf
requested for GWP of \eqref{NLWhyperbolic}
in \ssf$H^{\sigma-\frac12,\frac12}\ssb\times\ssb H^{\sigma-\frac12,-\frac12}$
depends on $\gamma$ and is represented in Figure 1 below.
There
\vspace{2mm}

\centerline{$\hfill
\gamma_1\ssb
=\ssb\frac{n\ssf+\ssf3}n\ssb
=\ssb1\!+\ssb\frac3n\ssf,\hfill
\gamma_2\ssb
=\ssb\frac{(n+1)^2}{(n-1)^2+\ssf4}\ssb
=\ssb1\!+\ssb\frac2{\frac{n-1}2+\frac2{n-1}}\ssf,\hfill
\gamma_{\text{conf}}\ssb
=\ssb\frac{n\ssf+\ssf3}{n\ssf-\ssf1}\ssb
=\ssb1\!+\ssb\frac4{n\ssf-\ssf1}\ssf,
\hfill$}
\centerline{$
\gamma_3\ssb
=\ssb\frac{n^2+\ssf5\ssf n\ssf-\ssf2\ssf+\ssf
\sqrt{\ssf n^4+\ssf2\ssf n^3+\ssf21\ssf n^2-\ssf12\ssf n\ssf+\ssf4\ssf}}
{2\ssf n^2\ssf-\ssf2\ssf n}\ssb
=\ssb1\!+\ssb\frac{\sqrt{\ssf4\ssf n\ssf+\ssf(\frac{n-6}2-\frac2{n-1})^2}
\,-\,(\frac{n-6}2-\frac2{n-1})}n\ssf,
$}
\centerline{$\hfill
\gamma_4\ssb
=\ssb\frac{n^2+\ssf2\ssf n\ssf-\ssf5}{n^2-\ssf2\ssf n\ssf-\ssf1}\ssb
=\ssb1\!+\ssb\frac2{\frac{n-1}2-\frac1{n-1}}\ssf,\hfill
\gamma_\infty\ssb
=\min\ssf\{\gamma_3,\gamma_4\}
=\begin{cases}
\,\gamma_3
&\text{if \,}n\ssb=\ssb4\ssf,5\ssf,\\
\,\gamma_4
&\text{if \,}n\ssb\ge\ssb6\ssf,\\
\end{cases}
\hfill$}
\vspace{1mm}

\noindent
and the curves \ssf$C_1$, $C_2$\hspace{.1mm}, $C_3$ are given by
\vspace{2mm}

\noindent
\centerline{$\hfill
C_1(\gamma)\ssb
=\ssb\frac{n\ssf+\ssf1}4\ssf\bigl(\ssf1\ssb-\ssb\frac{n\ssf+\ssf5}
{2\,n\ssf\gamma\ssf-\ssf n\ssf-\ssf1}\ssf\bigr)\ssf,\hfill
C_2(\gamma)\ssb
=\ssb\frac{n\ssf+\ssf1}4\ssb-\ssb\frac1{\gamma\ssf-\ssf1}\ssf,
\hfill
C_3(\gamma)\ssb=\ssb\frac n2\ssb-\ssb\frac2{\gamma\ssf-\ssf1}\ssf.
\hfill$}
\vspace{-2.5mm}

\noindent

\begin{figure}[ht]\label{GWPn}
\begin{center}
\psfrag{sigma}[c]{$\sigma$}
\psfrag{0}[c]{$0$}
\psfrag1[c]{$1$}
\psfrag{1/2}[c]{$\frac12$}
\psfrag{n/2}[c]{$\frac n2$}
\psfrag{gamma}[c]{$\gamma$}
\psfrag{gamma1}[c]{$\gamma_1$}
\psfrag{gamma2}[c]{$\gamma_2$}
\psfrag{gammaconf}[c]{$\gamma_{\text{conf}}$}
\psfrag{gammainfty}[c]{$\gamma_\infty$}
\psfrag{C1}[c]{$C_1$}
\psfrag{C2}[c]{$C_2$}
\psfrag{C3}[c]{$C_3$}
\includegraphics[width=10cm]{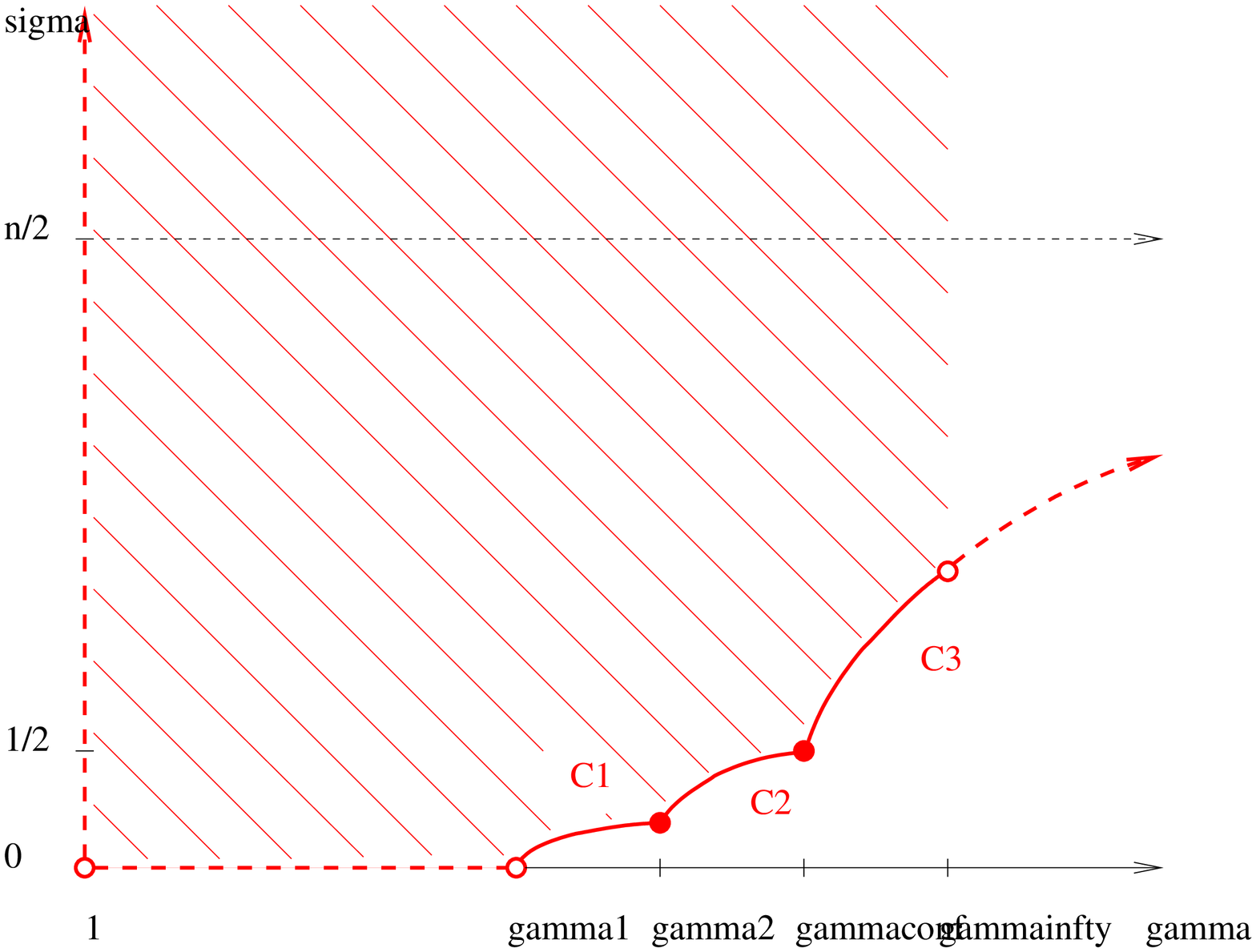}
\end{center}
\caption{Regularity in dimension \ssf$n\ssb\ge\ssb4$}
\end{figure}

\begin{theorem}\label{WPL2}
Assume that \ssf$F(u)$ satisfies \eqref{power}.
Then \eqref{NLWhyperbolic} is globally well--posed for small initial data in
\ssf$H^{\sigma-\frac12,\frac12}\!\times\!H^{\sigma-\frac12,-\frac12}$
in the following cases$\,:$
\begin{itemize}
\item[(A)]
\,$1\!<\!\gamma\!\le\!\gamma_1$
and \,$\sigma\!>\!0$\,;
\item[(B)]
\,$\gamma_1\!<\!\gamma\!\le\!\gamma_2$
and \,$\sigma\!\ge\!C_1(\gamma)$\,{\rm;}
\item[(C)]
\,$\gamma_2\!\le\!\gamma\!\le\!\gamma_{\mathrm{conf}}$
and \,$\sigma\!\ge\!C_2(\gamma)$\,;
\item[(D)]
$\gamma_{\mathrm{conf}}\!\le\!\gamma\!<\!\gamma_{\infty}$
{\rm(\footnote{\,The endpoint \ssf$\gamma\!=\!\gamma_{\infty}$
is excluded in dimension \ssf$n\!=\!4,5$ \ssf
and is actually included in dimension \ssf$n\!\ge\!6\ssf$.})}
and \,$\sigma\!\ge\!C_3(\gamma)$\ssf.
\end{itemize}
More precisely,
for such \,$\gamma$ and \,$\sigma$,
there exists an admissible couple \ssf$(p_0,q_0)$
and, for sufficiently small initial data
\ssf$(f,g)\!\in\!H^{\sigma-\frac12,\frac12}\!\times\!H^{\sigma-\frac12,-\frac12}$,
a unique solution \ssf$u$ to \eqref{NLWhyperbolic} such that
\begin{equation*}
u\ssb\in\ssb
C^1\bigl(\ssf\mathbb{R}\ssf;H^{\sigma-\frac12,\frac12}(S)\bigr)
\cap\ssf L^{p_0}\bigl(\ssf\mathbb{R}\ssf;L^{q_0}(S))
\quad\text{and}\quad
\partial_{\ssf t}u\ssb\in\ssb
C\bigl(\ssf\mathbb{R}\ssf;H^{\sigma-\frac12,-\frac12}(S)\bigr)\ssf.
\end{equation*}
\end{theorem}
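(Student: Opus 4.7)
The proof proceeds by a standard contraction mapping argument applied to Duhamel's formula. The plan is to set up, for each regime (A)--(D), an appropriate Banach space $X$ built from a norm of the form
\begin{equation*}
\|u\|_X = \|u\|_{L^\infty(\R;\,H^{\sigma-\frac12,\frac12})}
+ \|\partial_{\ssf t}u\|_{L^\infty(\R;\,H^{\sigma-\frac12,-\frac12})}
+ \|u\|_{L^{p_0}(\R;\,L^{q_0})},
\end{equation*}
for a suitably chosen admissible pair $(p_0,q_0)\!\in\!T_n$, and to show that the Duhamel map
\begin{equation*}\textstyle
\Phi(u)(t,x)=(\cos t\ssf D_x)f(x)+\frac{\sin t\ssf D_x}{D_x}\ssf g(x)
+\int_{\,0}^{\ssf t}ds\,\frac{\sin(t-s)D_x}{D_x}F(u(s,x))
\end{equation*}
is a contraction on a small closed ball of $X$ provided $(f,g)$ is small in $H^{\sigma-\frac12,\frac12}\!\times\!H^{\sigma-\frac12,-\frac12}$.

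By Theorem \ref{StrichartzEstimates} applied with some retarded admissible pair $(\tilde p,\tilde q)$, controlling $\|\Phi(u)\|_X$ reduces to controlling
\begin{equation*}
\|F(u)\|_{L^{\tilde p'}(\R;\,H^{\sigma+\tilde\sigma-1}_{\tilde q'})}
\ls\,\|u\|_X^{\gamma},
\end{equation*}
together with the Lipschitz-type analogue for $\Phi(u)-\Phi(v)$ supplied by the second inequality in \eqref{power}. I would estimate the nonlinear term by first absorbing the spatial Sobolev regularity $H^{\sigma+\tilde\sigma-1}_{\tilde q'}$ into an $L^{\ssf r}$-norm via the Sobolev embedding of Proposition \ref{SET}, then applying H\"older in space to obtain $\|F(u(s,\cdot))\|_{L^r}\ls\|u(s,\cdot)\|_{L^{q_0}}^\gamma$ with $r=q_0/\gamma$, and finally H\"older in time provided $\tilde p'\gamma=p_0$. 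The admissibility of $(p_0,q_0)$ and $(\tilde p,\tilde q)$ from Definition \ref{admissibility}, together with the Sobolev scaling constraint linking $\sigma+\tilde\sigma-1$, $\tilde q'$, $q_0$ and $\gamma$, then translates into a linear inequality in $(\sigma,\gamma)$ which must be saturated exactly on one of the curves $C_1$, $C_2$, $C_3$.

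The main obstacle is the case analysis itself: in each of the four regimes one must choose $(p_0,q_0)$ and $(\tilde p,\tilde q)$ inside the admissible triangle $T_n$ so that (i) the H\"older/Sobolev chain above closes, (ii) the requested Sobolev exponent $\sigma+\tilde\sigma-1$ is non-negative and absorbed by the available regularity, and (iii) the resulting threshold matches exactly the curve $C_i(\gamma)$ claimed in the statement. For $1\!<\!\gamma\!\le\!\gamma_1$ one can work on the Keel--Tao edge $\frac2{p_0}+\frac{n-1}{q_0}=\frac{n-1}{2}$ with $\sigma$ arbitrarily small; between $\gamma_1$ and $\gamma_{\mathrm{conf}}$ the optimum shifts to other edges of $T_n$; at the conformal value $\gamma=\gamma_{\mathrm{conf}}$ one takes $(\tilde p,\tilde q)=(p_0,q_0)$; and for $\gamma>\gamma_{\mathrm{conf}}$ one exploits interior points of $T_n$, which is where the dichotomy between $n=4,5$ (endpoint $\gamma_\infty=\gamma_3$ excluded) and $n\ge6$ (endpoint $\gamma_\infty=\gamma_4$ included) arises, depending on whether the limiting pair lies on the boundary or in the interior of $T_n$. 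Once the contraction is established on a small ball, the uniqueness, the continuous dependence $(f,g)\mapsto u$, and the regularity $u\!\in\!C^1(\R;H^{\sigma-\frac12,\frac12})\cap L^{p_0}(\R;L^{q_0})$, $\partial_t u\!\in\!C(\R;H^{\sigma-\frac12,-\frac12})$, all follow from \eqref{Str1}--\eqref{Str2} applied to $u$ and to differences, as in the proof of \cite[Theorem~6.3]{APV2}.
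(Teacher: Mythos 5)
Your overall scheme --- Duhamel's formula plus the Strichartz estimates of Theorem \ref{StrichartzEstimates}, the nonlinear bound from \eqref{power}, a Sobolev embedding in space followed by H\"older in space and in time with $q_0=\tilde q_1'\gamma$ and $p_0=\tilde p'\gamma$, and a contraction on a small ball of $X$ --- is exactly the paper's. But two points need attention. First, your condition (ii) is stated backwards: the mechanism you describe requires embedding $L^{\tilde q_1'}$ into $H^{\sigma+\tilde\sigma-1}_{\tilde q'}$, which by Proposition \ref{SET} forces $\sigma+\tilde\sigma-1\le n\bigl(\frac1{\tilde q'}-\frac1{\tilde q_1'}\bigr)\le 0$, i.e.\ a \emph{non-positive} Sobolev order; with $\sigma+\tilde\sigma-1$ ``non-negative'' as you write, the H\"older bound $\|F(u)\|_{L^{\tilde q_1'}}\ls\|u\|_{L^{q_0}}^\gamma$ does not control $\|F(u)\|_{H^{\sigma+\tilde\sigma-1}_{\tilde q'}}$, and one would need a fractional chain rule, which the paper deliberately avoids by imposing precisely this sign condition (it is condition (iii) of \eqref{condts}).

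Second, and more substantially, the actual content of the theorem lies in the exponent bookkeeping that you only assert: one must verify that admissibility of $(p,q)$ and $(\tilde p,\tilde q)$, the H\"older relations $p=\tilde p'\gamma$, $q=\tilde q_1'\gamma$, and the Sobolev scaling condition can be satisfied simultaneously exactly when $\sigma\ge C_1(\gamma)$, $C_2(\gamma)$, $C_3(\gamma)$ on the respective ranges of $\gamma$; this is where the breakpoints $\gamma_1,\gamma_2,\gamma_{\mathrm{conf}},\gamma_3,\gamma_4$ come from, and in the paper it is done by reducing the system \eqref{condts} to explicit ranges for $\frac1q$ and $\frac1{\tilde q}$, case by case and dimension by dimension. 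The thresholds are not obtained by ``saturating a linear inequality in $(\sigma,\gamma)$'' --- $C_1,C_2,C_3$ are not linear in $\gamma$ --- and the $n=4,5$ versus $n\ge6$ dichotomy at $\gamma_\infty$ arises from comparing $\gamma_3$ (which comes from a strict inequality among the constraints on $\frac1q$) with $\gamma_4$ (which comes from a non-strict one), not merely from whether the limiting pair sits on the boundary or in the interior of $T_n$. Without carrying out this verification, cases (A)--(D) and their thresholds remain unproved; the contraction argument itself is routine and matches the paper.
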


\begin{proof}
We apply the standard fixed point method based on Strichartz estimates.
Define \,$u\!=\!\Phi(v)$ \,as the solution to the following linear Cauchy problem
\begin{equation}\begin{cases}
\,\partial_{\,t}^{\ssf2}u(t,x)\ssb-\ssb D_x^{\ssf2}u(t,x)=F(v(t,x))\ssf,\\
\,u(0,x)\ssb=\ssb f(x)\ssf,\\
\,\partial_t|_{t=0}\ssf u(t,x)\ssb=\ssb g(x)\ssf,\\
\end{cases}\end{equation}
which is given by the Duhamel formula
\begin{equation*}\textstyle
u(t,x)
=(\cos t\ssf D_x)\ssf f\ssf(x)
+\frac{\sin t\ssf D_x}{D_x}\,g\ssf(x)
+{\displaystyle\int_{\,0}^{\,t}}\!ds\,\frac{\sin\ssf(t-s)\ssf D_x}{D_x}\ssf F(v(s,x))\,.
\end{equation*}
By Theorem \ref{StrichartzEstimates}
this solution satisfies the Strichartz estimate
\begin{equation*}\label{StrichartzL2v1}
\begin{aligned}
&\|u\|_{L^{\infty}\big(\mathbb{R}\ssf;\ssf H^{\sigma-\frac12,\frac12}\big)}
+\|\partial_tu\|_{L^{\infty}\big(\mathbb{R}\ssf;
\ssf H^{\sigma-\frac12,-\frac12}\big)}
+\|u\|_{L^p(\mathbb{R};\ssf L^q\vphantom{L_t^{\tilde p'}})}\\
&\lesssim\,\|f\|_{H^{\sigma-\frac12,\frac12}}
+\,\|g\|_{H^{\sigma-\frac12,-\frac12}}
+\,\|F(v)\|_{L^{\tilde{p}'}\ssb\bigl(\mathbb{R}\ssf;
\ssf H^{{\sigma+\tilde{\sigma}}-1}_{\tilde{q}'}\bigr)}\,,
\end{aligned}
\end{equation*}
which hold for all admissible couples $(p,q)$, $(\tilde{p},\tilde{q})$
introduced in Definition \ref{admissibility} and for all \ssf$\sigma\!\ge\!\frac{n+1}2\bigl(\frac12\!-\!\frac1q\bigr)$,
$\tilde{\sigma}\!\ge\!\frac{n+1}2\bigl(\frac12\!-\!\frac1{\tilde{q}}\bigr)$. 
According to the  nonlinear assumption \eqref{power},
we estimate the inhomogeneous term as follows\,:
\begin{equation*}
\|F(v)\|_{L^{\tilde p'}\bigl(\mathbb{R}\ssf;
\ssf\,H^{{\sigma+\tilde{\sigma}}-1}_{\tilde q'}\bigr)}
\lesssim \,\|\,|v|^\gamma\|_{L^{\tilde p'}\bigl(\mathbb{R}\ssf;
\ssf H^{{\sigma+\tilde{\sigma}}-1}_{\tilde q'}\bigr)}\ssf.
\end{equation*}
Assuming
\ssf$\sigma\ssb+\ssb\tilde{\sigma}\ssb-\!1\ssb
\le\ssb n\,(\frac1{\tilde{q}'}\!-\!\frac1{\tilde{q}_1'})\ssb
\le\ssb0$\ssf,
we deduce from Sobolev's embedding (Proposition \ref{SET}) that
\begin{equation}\label{StrichartzL2v2}
\begin{aligned}
&\|u\|_{L^{\infty}\bigl(\mathbb{R}\ssf;
\ssf H^{\sigma-\frac12,\frac12}\bigr)}
+\,\|\partial_tu\|_{L^{\infty}\bigl(\mathbb{R}\ssf;
\ssf H^{\sigma-\frac12,-\frac12}\bigr)}
+\,\|u\|_{L^p(\mathbb{R}\ssf;\ssf
L^q\vphantom{L_t^{\tilde p'}})}\\
&\lesssim\,\|f\|_{H^{\sigma-\frac12,\frac12}}
+\,\|g\|_{H^{\sigma-\frac12,-\frac12}}
+\,\|v\|_{L^{\tilde{p}'\gamma}\big( \mathbb{R}\ssf;\ssf
L^{\tilde{q}_1'\ssb\gamma}\big)}^{\,\gamma}\ssf.
\end{aligned}
\end{equation}
In order to remain within the same function space,
we require that \ssf$q\ssb=\ssb\tilde{q}_1'\gamma$ and 
$p=\tilde{p}'\gamma$. It remains for us to check that
the following conditions can be fulfilled simultaneously\,:
\begin{equation}\label{condts}
\begin{cases}
\;\text{(i)}&
p\ssb=\ssb\tilde{p}'\gamma\,,\\
\;\text{(ii)}&
0\ssb<\ssb\frac1{\tilde{q}'}\ssb\le\ssb\frac\gamma q\ssb<\ssb1\,,\\
\;\text{(iii)}&
\frac{n-1}2\ssb-\ssb\frac{n+1}2\ssf
\bigl(\frac1q\ssb+\ssb\frac1{\tilde{q}}\bigr)\ssb
\le\ssb n\ssf\bigl(\frac1{\tilde{q}'}\ssb-\ssb\frac\gamma q\bigr)\,,\\
\;\text{(iv)}&
\frac2p\ssb+\ssb\frac{n-1}q\ssb\ge\ssb\frac{n-1}2\,,\\
\;\text{(v)}&
\frac2{\tilde{p}}\ssb+\ssb\frac{n-1}{\tilde{q}}\ssb\ge\ssb\frac{n-1}2\,,\\
\;\text{(vi)}&
\bigl(\frac1p,\frac1q\bigr)\ssb
\in\ssb\bigl(0,\frac12\bigr]\ssb
\times\ssb\bigl[\frac{n-3}{2\ssf(n-1)},\frac12\bigr)\,,\\ 
\;\text{(vii)}&
\bigl(\frac1{\tilde{p}},\frac1{\tilde{q}}\bigr)\ssb
\in\ssb\bigl(0,\frac12\bigr]\ssb
\times\ssb\bigl[\frac{n-3}{2\ssf(n-1)},\frac12\bigr)\,. 
\end{cases}
\end{equation}
Suppose indeed that there exist indices
\ssf$p,q,\tilde{p},\tilde{q}$ \ssf
satisfying all conditions in (\ref{condts}).
Then \eqref{StrichartzL2v2} shows that
\ssf$\Phi$ \ssf maps \ssf$X$ \ssf into itself,
where \ssf$X$ \ssf denotes the Banach space
\begin{equation*}\begin{aligned}
X=\bigl\{\,u\,\big|\;
&u\ssb\in\ssb
C\ssf(\ssf\mathbb{R}\ssf;H^{\sigma-\frac12,\frac12}(S))
\ssf\cap\ssf L^p(\ssf \mathbb{R}\ssf;L^q(S))\,,\\
&\partial_{\ssf t\ssf}u\ssb\in\ssb
C\ssf(\mathbb{R}\ssf;H^{\sigma-\frac12,-\frac12}(S))\,\bigr\}\,,
\end{aligned}\end{equation*}
equipped with the norm
\begin{equation*}
\|u\|_{X\vphantom{H^{\frac12}}}
=\,\|u\|_{L^\infty\bigl(\ssf \mathbb{R}\ssf;\ssf
H^{\sigma-\frac12,\frac12}\bigr)}
+\,\|\partial_{\ssf t\ssf}u\|_{L^\infty\bigl(\ssf \mathbb{R}\ssf;\ssf
H^{\sigma-\frac12,-\frac12}\bigr)}
+\,\|u\|_{L^p\bigl(\ssf \mathbb{R}\ssf;\ssf L^q\bigr)}\,,
\end{equation*}
Moreover we shall show that
\ssf$\Phi$ \ssf is a contraction on the ball
\begin{equation*}
X_{\varepsilon}=\{\,u\!\in\!X\mid\|u\|_X\ssb\le\varepsilon\,\}\,,
\end{equation*}
provided \ssf$\varepsilon\!>\!0$ \ssf and \ssf
$\|f\|_{H^{\sigma-\frac12,\frac12}}\!+\|g\|_{H^{\sigma-\frac12,-\frac12}}$
are sufficiently small.
Let \ssf$v,\tilde{v}\!\in\!X$
\ssf and \ssf$u\!=\!\Phi(v)$\ssf,
\ssf $\tilde{u}\!=\!\Phi(\tilde{v})$\ssf.
By arguing as above and using H\"older's inequality,
we have
\begin{equation}\label{contractionL2}
\begin{aligned}
\|\,u\ssb-\ssb\tilde{u}\,\|_{X\vphantom{L_t^{\tilde{p}'}}}
&\le\,C\;\|\ssf F(v)\ssb-\ssb F(\tilde{v})\ssf
\|_{L^{\tilde{p}'}\bigl(\ssf\mathbb{R}\ssf;
\ssf H_{\tilde{q}'}^{\sigma+\tilde{\sigma}-1}\big)}\\
&\le\,C\;\bigl\|\ssf
\{\ssf|v|^{\gamma-1}\!+|\tilde{v}|^{\gamma-1}\ssf\}
\,|\ssf v\ssb-\ssb\tilde{v}\ssf|\,
\bigr\|_{L^{\tilde{p}'}\bigl(\ssf\mathbb{R}\ssf;
\ssf L^{\tilde{q}_1'}\bigr)}\\
&\le\,C\;
\bigl\{\ssf\|v\|_{L^p\bigl(\mathbb{R}\ssf;\ssf L^q\bigr)}^{\,\gamma-1}\!
+\|\tilde{v}\|_{L^p\bigl(\mathbb{R}\ssf;\ssf L^q\bigr)}^{\,\gamma-1}\bigr\}\,
\|\ssf v\ssb-\ssb\tilde{v}\ssf\|_{L^p\bigl(\mathbb{R}\ssf;\ssf L^q\bigr)}\\
& \le\,C\; 
\bigl\{\ssf\|v\|_X^{\gamma-1}\!+\|\tilde{v}\|_X^{\gamma-1}\ssf\bigr\}\,
\|\ssf v\ssb-\ssb\tilde{v}\ssf\|_X\,.
\end{aligned}
\end{equation}
If \ssf$\|v\|_X\!\le\ssb\varepsilon$ \ssf
and \ssf$\|\tilde{v}\|_X\!\le\ssb\varepsilon$ \ssf and
\ssf$\|f\|_{H^{\sigma-\frac12,\frac12}}\!
+\|g\|_{H^{\sigma-\frac12,-\frac12}}\!\le\ssb\delta$\ssf,
then \eqref{StrichartzL2v2} yields on one hand
\begin{equation*}
\|u\|_X\ssb\le\ssf C\,\delta+\ssf C\,\varepsilon^{\ssf\gamma}
\quad\text{and}\quad
\|\tilde{u}\|_X\ssb\le\ssf C\,\delta+\ssf C\,\varepsilon^{\ssf\gamma}\,,
\end{equation*}
while \eqref{contractionL2} yields on the other hand
\begin{equation*}
\|\ssf u\ssb-\ssb\tilde{u}\ssf\|_X\ssb
\le2\,C\,\varepsilon^{\gamma-1}\,\|\ssf v\ssb-\ssb\tilde{v}\ssf\|_X\,.
\end{equation*}
Thus, if we choose \ssf$\varepsilon\!>\!0$ \ssf
and \ssf$\delta\!>\!0$ \ssf so small that 
\ssf$C\,\varepsilon^{\gamma-1}\!\le\ssb\frac14$
and \ssf$C\,\delta\ssb\le\ssb\frac34\,\varepsilon$\ssf,
then
\begin{equation*}\textstyle
\|u\|_X\ssb\le\varepsilon\ssf,\;\|\tilde{u}\|_X\ssb\le\varepsilon
\quad\text{and}\quad
\|\ssf u\ssb-\ssb\tilde{u}\ssf\|_X\ssb
\le\frac12\,\|\ssf v\ssb-\ssb\tilde{v}\ssf\|_X,
\end{equation*}
if \ssf$v,\tilde{v}\ssb\in\!X_{\varepsilon}$
and \ssf$u\ssb=\ssb\Phi(v)$\ssf, $\tilde{u}\ssb=\ssb\Phi(\tilde{v})$\ssf.
Hence the map \ssf$\Phi$ \ssf is a contraction
on the complete metric space $X_{\varepsilon}$
and the fixed point theorem allows us to conclude.

Let us eventually prove the existence
of couples $(p,q)$ and $(\tilde{p},\tilde{q})$
satisfying all conditions in \eqref{condts}.
Condition (\ref{condts}.iii) amounts to
\begin{equation}\label{cond0}\textstyle
\frac{2\ssf n\ssf\gamma\ssf-\ssf n\ssf-\ssf1}q+\frac{n-1}{\tilde{q}}\le n\ssb+\ssb1
\quad\mathrm{i.e.}\quad
\frac1{\tilde{q}}\le\frac{n+1}{n-1}\ssb-\ssb\frac{2n\gamma-n-1}{n-1}\frac1q\,.
\end{equation}
By combining \eqref{cond0} with (\ref{condts}.ii)  and (\ref{condts}.vi),
we deduce that
\begin{equation*}\textstyle
\frac{n-3}{2(n-1)}\le\frac1q\le\frac2{(\gamma-1)(n+1)}\,.
\end{equation*}
This implies that \ssf$\gamma\ssb
\le\ssb\widetilde{\gamma}_\infty\!
=\ssb\frac{n^2+2n-7}{(n+1)(n-3)}\ssb
=\ssb1\!+\ssb\frac{4(n-1)}{(n+1)(n-3)}$\ssf.
By combining \eqref{cond0} with (\ref{condts}.vii),
we obtain
\begin{equation*}\textstyle
\frac{n-3}{2(n-1)}\le\frac1{\tilde{q}}\le
\min\big\{\ssf\frac12,\frac{n+1}{n-1}\ssb-\ssb\frac{2n\gamma-n-1}{n-1}\frac1q\ssf\bigr\}\,,
\quad\frac1{\tilde{q}}\ne\frac12\,.
\end{equation*}
By combining (\ref{cond0}) with (\ref{condts}.vii),
we also obtain \ssf$\frac1q\le\frac{n+5}{2(2n\gamma-n-1)}$\ssf.
In summary, the conditions on $q$ reduce to
\begin{equation*}\textstyle
\frac{n-3}{2(n-1)}\le\frac1q\le\min\bigl\{
\frac12,
\frac1\gamma,
\frac2{(\gamma-1)(n+1)},
\frac{n+5}{2(2n\gamma-n-1)}
\bigr\}\,,\quad
\frac1q\ne\frac12,\frac1\gamma\,,
\end{equation*}
or case by case to
\begin{itemize}
\item
$1\ssb<\ssb\gamma\ssb\le\ssb\gamma_1$ \ssf and
\ssf$\frac{n-3}{2(n-1)}\ssb\le\ssb\frac1q\ssb<\ssb\frac12$\ssf,
\item
$\gamma_1\ssb<\ssb\gamma\ssb\le\ssb\gamma_2$
\ssf and \ssf$\frac{n-3}{2(n-1)}\ssb
\le\ssb\frac1q\ssb
\le\ssb\frac{n+5}{2(2n\gamma-n-1)}$\ssf,
\item
$\gamma_2\ssb<\ssb\gamma\ssb\le\ssb\widetilde{\gamma}_{\infty}$ \ssf and
\ssf$\frac{n-3}{2(n-1)}\ssb\le\ssb\frac1q\ssb\le\ssb\frac2{(\gamma-1)(n+1)}$\ssf.
\end{itemize}
Let us turn to the indices \ssf$p$ \ssf and \ssf$\tilde{p}$\ssf.
According to \eqref{condts}, we have
\begin{equation*}\textstyle
\frac{n-1}2\ssf\bigl(\frac12\ssb-\ssb\frac1q\bigr)\ssb
\le\ssb\frac1p\ssb\le\ssb\frac12
\quad\text{and}\quad
\frac{n-1}2\ssf\bigl(\frac12\ssb-\ssb\frac1{\tilde{q}}\bigr)\ssb
\le\ssb\frac1{\tilde{p}}\ssb\le\frac12\,.
\end{equation*}
Since \ssf$\frac1{\tilde{p}}=1\ssb-\ssb\frac\gamma p$\ssf,
we end up with the following conditions on \ssf$p$ \ssf and \ssf$\tilde{p}$\;: 
\begin{equation}\label{condindecesp}\begin{cases}
\;\mathrm{(i)}&\textstyle
\frac{n-1}2\ssf\bigl(\frac12\ssb-\ssb\frac1q\bigr)\ssb
\le\ssb\frac1p\ssb\le\ssb\min\ssf\bigl\{\frac12,
\frac{5-n}{4\ssf\gamma}\ssb
+\ssb\frac{n-1}{2\ssf\gamma\ssf\tilde{q}}\bigr\}\,,\\
\;\mathrm{(ii)}&\textstyle
\frac{n-1}2\ssf\bigl(\frac12\ssb-\ssb\frac1{\tilde{q}}\bigr)\ssb
\le\ssb\frac1{\tilde{p}}\ssb\le\ssb\frac12\,.\\
\end{cases}\end{equation}
There exist indices \ssf$p$ \ssf and \ssf$\tilde{p}$
\ssf which satisfy \eqref{condindecesp} provided that
\ssf$\frac1{\tilde{q}}\ssb
\ge\ssb\frac{\gamma}2\ssb
+\ssb\frac{n-5}{2\ssf(n-1)}\ssb
-\ssb\frac{\gamma}{q}$\ssf.
We thus have to find \ssf$\tilde{q}$ \ssf such that  
\begin{equation}\label{condtildeq}\textstyle
\max\ssf\bigl\{\frac{n-3}{2\ssf(n-1)},
\frac\gamma2\ssb
+\ssb\frac{n-5}{2\ssf(n-1)}\ssb
-\ssb\frac\gamma q\bigr\}
\le\frac1{\tilde{q}}
\le\min\ssf\bigl\{\frac12,
\frac{n+1}{n-1}\ssb
-\ssb\frac{2\ssf n\ssf \gamma\ssf
-\ssf n\ssf-1}{(n-1)\ssf q}\bigr\}\ssf,
\end{equation}
with \ssf$\frac1{\tilde{q}}\ssb\ne\ssb\frac12$\ssf.
This implies that \ssf$q$ \ssf has to satisfy the following conditions\,: 
\begin{equation}\label{condq}\textstyle
\max\ssf\bigl\{\frac{n-3}{2\ssf(n-1)},
\frac12\ssb-\ssb\frac2{\gamma\ssf(n-1)}\bigr\}
\le\frac1q
\le\min\ssf\bigl\{\frac12,\frac1\gamma,
\frac2{(\gamma-1)\ssf(n+1)},
\frac{n+5}{2\ssf(2n\gamma-n-1)},
\frac{n+7-\gamma\ssf(n-1)}{2\ssf(\gamma-1)\ssf(n+1)}\bigr\}\ssf,
\end{equation}
with \ssf$\frac1q\ssb
\ne\ssb\frac12\ssb-\ssb\frac2{\gamma\ssf(n-1)}\ssf,
\,\frac12\ssf,
\,\frac1\gamma\ssf $\ssf. 
The fact that
\ssf$\frac{n-3}{2\ssf(n-1)}\ssb
\leq\ssb\frac{n+7-\gamma\ssf(n-1)}{2\ssf(\gamma-1)\ssf(n+1)}$
\ssf easily implies that
\ssf$\gamma\!\le\!\gamma_4\!<\tilde{\gamma}_\infty$\ssf. 
The fact that
\ssf$\frac12\ssb-\ssb\frac2{\gamma\ssf(n-1)}\ssb
<\ssb\frac{n+7-\gamma\ssf(n-1)}{2\ssf(\gamma-1)\ssf(n+1)}$
\ssf implies that \ssf$\gamma\!<\!\gamma_3$\ssf.
In summary, here are the final conditions on \ssf$q$\ssf,
depending on \ssf$\gamma$
\ssf and possibly on the dimension \ssf$n$\,:
\begin{itemize}
\item[(A)] 
\,$1\ssb<\ssb\gamma\ssb\le\ssb\gamma_1\ssb=\ssb1\ssb+\ssb\frac3n$
\ssf and
\ssf$\frac{n-3}{2\ssf(n-1)}\ssb\le\ssb\frac1q\ssb<\ssb\frac12$\ssf.
\item[(B)]
\,$\gamma_1\ssb<\ssb\gamma\ssb\le\ssb\gamma_2\ssb
=\ssb\frac{(n+1)^2}{n^2-2n+5}$
\ssf and
\ssf$\frac{n-3}{2\ssf(n-1)}\ssb\le\ssb\frac1q\ssb
\le\ssb\frac{n+5}{2\ssf(2n\gamma-n-1)}$\ssf.
\item[(C)]
\,$\gamma_2\ssb<\ssb\gamma\ssb<\ssb\gamma_{\mathrm{conf}}$
\ssf and
\ssf$\frac{n-3}{2\ssf(n-1)}\ssb\le\ssb\frac1q\ssb
\le\ssb\frac2{(\gamma-1)\ssf(n+1)}$
\ssf when \ssf$n\ssb\ge\ssb5$\ssf.\\
When \ssf$n\ssb=\ssb4$\ssf,
we distinguish two subcases\,:
\begin{itemize}
\item[$\bullet$]
\,$\gamma_2\ssb<\ssb\gamma\ssb\le\ssb2$
\ssf and
\ssf$\frac{n-3}{2\ssf(n-1)}\ssb\le\ssb\frac1q\ssb
\le\ssb\frac2{(\gamma-1)\ssf(n+1)}$\ssf,
\item[$\bullet$]
\,$2\ssb<\ssb\gamma\ssb<\ssb\gamma_{\mathrm{conf}}$
\ssf and
\ssf$\frac12\ssb-\ssb\frac2{\gamma\ssf(n-1)}\ssb
<\ssb\frac1q\ssb\le\ssb\frac2{(\gamma-1)\ssf(n+1)}$\ssf.
\end{itemize}
\item[(D)]
\,When \ssf$n\ssb\ge\ssb6$\ssf,
we distinguish two subcases\,:
\begin{itemize}
\item[$\bullet$]
\,$\gamma_{\mathrm{conf}}\ssb\le\ssb\gamma\ssb\le\ssb2$
\ssf and \ssf$\frac{n-3}{2\ssf(n-1)}\ssb\le\ssb\frac1q\ssb
\leq\ssb\frac{n+7-\gamma\ssf(n-1)}{2\ssf(\gamma-1)\ssf(n+1)}$\ssf,
\item[$\bullet$]
\,$2\ssb<\ssb\gamma\ssb\leq \ssb\gamma_4$ \ssf and
\ssf$\frac12\ssb-\ssb\frac2{\gamma\ssf(n-1)}\ssb<\ssb\frac1q\ssb
\leq\ssb\frac{n+7-\gamma\ssf(n-1)}{2\ssf(\gamma-1)\ssf(n+1)}$\ssf. 
\end{itemize}
When \ssf$n\ssb=\ssb5$\ssf,
we replace \ssf$\gamma_4$ \ssf by \ssf$\gamma_3$\ssf and require $\gamma<\gamma_3$.\\
When \ssf$n\ssb=\ssb4$\ssf,
\ssf$\gamma_{\mathrm{conf}}\ssb\le\ssb\gamma\ssb<\ssb\gamma_3$
\ssf and
\ssf$\frac12\ssb-\ssb\frac2{\gamma\ssf(n-1)}\ssb<\ssb\frac1q\ssb\leq \ssb\frac{n+7-\gamma\ssf(n-1)}{2\ssf(\gamma-1)\ssf(n+1)}$\ssf.
\end{itemize}

Let us now examine these cases separately.
\smallskip

\noindent
{\bf Case (A).}
In this case,
we choose successively \ssf$q$ \ssf such that
\begin{equation*}\textstyle
\frac{n-3}{2(n-1)}\le\frac1q<\frac12\,,
\end{equation*}
$\tilde q$ \ssf satisfying \eqref{condtildeq},
and \ssf$p$\ssf, $\tilde{p}$ \ssf satisfying \eqref{condindecesp}. 
Thus, when
\ssf$1\ssb<\ssb\gamma\ssb\le\gamma_1$
\ssf and \ssf$\sigma\ssb>\ssb0$\ssf,
there exists always an admissible couple $(p,q)$
such that all conditions \eqref{condts} are satisfied and
\ssf$\sigma\ssb\ge\ssb\frac{(n+1)}2\ssf(\frac12\ssb-\ssb\frac1q)$\ssf. 
\smallskip

\noindent
{\bf Case (B).}
In this case,
we choose successively \ssf$q$ \ssf such that
\begin{equation*}\textstyle
\frac{n-3}{2(n-1)}  \leq \frac1{q} \leq \frac{n+5}{2 ( 2n\gamma - n - 1)}
\end{equation*}
$p$\ssf, $\tilde{p}$ \ssf satisfying \eqref{condindecesp}, 
and a correspondent $\tilde q$ which satisfies \eqref{condtildeq}.
Thus, when
\ssf$\gamma_1\ssb<\ssb\gamma\ssb\le\ssb\gamma_2$
\ssf and \ssf$\sigma\ssb
\ge\ssb\frac{n+1}4\ssb-\ssb\frac{(n+1)\ssf(n+5)}{4\ssf(2n\gamma-n-1)}$\ssf,
there exists an admissible couple $(p,q)$ such that 
all conditions \eqref{condts} are satisfied and
\ssf$\sigma\ssb\ge\ssb\frac{(n+1)}2\ssf(\frac12\ssb-\ssb\frac1q)$\ssf.
\smallskip

\noindent
{\bf Case (C).}
Assume first that \ssf$n\ssb\ge\ssb5$\ssf.
we choose successively \ssf$q$ \ssf such that
\begin{equation}\label{conditionC1}\textstyle
\frac{n-3}{2\ssf(n-1)}\le\frac1q\le\frac2{(\gamma-1)\ssf(n+1)}\,,
\end{equation}
$\tilde q$ \ssf satisfying \eqref{condtildeq},
and \ssf$p$\ssf, $\tilde{p}$ \ssf satisfying \eqref{condindecesp}. 
 
Assume next that \ssf$n\ssb=\ssb4$\ssf.
If \ssf$\gamma_2\ssb<\ssb\gamma\ssb\le\ssb2$\ssf,
we choose \ssf$q$ \ssf according to \eqref{conditionC1}.
If \ssf$2\ssb<\ssb\gamma\ssb<\ssb\gamma_{\mathrm{conf}}\ssf$,
we replace \eqref{conditionC1} by
\begin{equation*}\textstyle
\frac12\ssb-\ssb\frac2{\gamma\ssf(n-1)}\ssb<\ssb\frac1q\ssb
\le\ssb\frac2{(\gamma-1)\ssf(n+1)}\,.
\end{equation*}
In both cases,
we can choose afterwards \ssf$\tilde{q},p,\tilde{p}$ \ssf
satisfying \eqref{condtildeq} and \eqref{condindecesp}.  

In summary, when
\ssf$\gamma_2\ssb<\ssb\gamma\ssb<\ssb\gamma_{\mathrm{conf}}$
\ssf and \ssf$\sigma\ssb\ge\ssb\frac{n+1}4\ssb-\ssb\frac1{\gamma-1}$\ssf,
there exists always an admissible couple $(p,q)$
such that all conditions \eqref{condts} are satisfied
and \ssf$\sigma\ssb\ge\ssb\frac{(n+1)}2\ssf(\frac12\ssb-\ssb\frac1q)$\ssf.
\smallskip

\noindent
{\bf Case (D).}
Assume first that \ssf$n\ssb\ge\ssb6$\ssf.
If \ssf$\gamma_{\text{conf}}\ssb\le\ssb\gamma\ssb\le\ssb2$\ssf,
we choose successively \ssf$q$ \ssf such that
\begin{equation}\label{choiceD1}\textstyle
\frac{n-3}{2(n-1)}
\le\frac1q
\leq\frac{n+7-\gamma(n-1)}{2(\gamma-1)(n+1)}\,,
\end{equation}
$\tilde q$ \ssf satisfying \eqref{condtildeq},
and \ssf$p$\ssf, $\tilde{p}$ \ssf satisfying \eqref{condindecesp}. 
If \ssf$2\ssb<\ssb\gamma\ssb\leq \ssb\gamma_4$\ssf,
\eqref{choiceD1} is replaced by
\begin{equation}\label{choiceD2}\textstyle
\frac12\ssb-\ssb\frac2{\gamma(n-1)}
<\frac1q
\leq\frac{n+7-\gamma(n-1)}{2(\gamma-1)(n+1)}\,.
\end{equation}

Assume next that \ssf$n\ssb=\ssb5$\ssf.
We choose again \ssf$q$ \ssf according to \eqref{choiceD1}
if \ssf$\gamma_{\text{conf}}\ssb\le\ssb\gamma\ssb\le\ssb2$
\ssf and according to \eqref{choiceD2}
if \ssf$2\ssb<\ssb\gamma\ssb<\ssb\gamma_3$\ssf.
In both cases,
we can choose afterwards \ssf$\tilde{q},p,\tilde{p}$ \ssf
satisfying \eqref{condtildeq} and \eqref{condindecesp}.  

Assume eventually that \ssf$n\ssb=\ssb4$\ssf.
Then we choose \ssf$q$ \ssf according to \eqref{choiceD1}
and \ssf$\tilde{q},p,\tilde{p}$ \ssf
satisfying \eqref{condtildeq} and (\ref{condindecesp}).

In summary, in this case when \ssf$\sigma\ssb\geq\ssb\frac n2\ssb-\ssb\frac2{\gamma-1}$\ssf,
there exists always an admissible couple $(p,q)$ such that 
all conditions \eqref{condts} are satisfied and
\ssf$\sigma\ssb\ge\ssb\frac{n+1}2\ssf(\frac12\ssb-\ssb\frac1q)$\ssf.
\smallskip

This concludes the proof of Theorem \ref{WPL2}.
\end{proof}

\section*{Appendix A}
\label{AppendixA}

In this appendix we collect some lemmata in Fourier analysis on $\R$
which are used for the kernel analysis in Section \ref{Kernel} and in Appendix C. 
These lemmata are proved in \cite[Appendix A]{APV2}. 
\medskip

\noindent
\textbf{Lemma A.1.}\label{LemmaA1}{\textit{
Let \ssf$b$ be a compactly supported homogeneous symbol
on \ssf$\R$ of order \ssf$\nu\!>\!-1\ssf$.
In other words, \ssf$b$ is a smooth function on \,$\mathbb{R}^*$,
whose support is bounded in \ssf$\mathbb{R}$
and which has the following behavior at the origin\,:
\begin{equation*}
\sup_{\lambda\in\mathbb{R}^*}\,|\lambda|^{\,\ell-\nu}\,
|\,\partial_\lambda^{\,\ell}b(\lambda)\ssf|\,<+\infty
\qquad\forall\;\ell\!\in\!\mathbb{N}\,.
\end{equation*}
Then its Fourier transform
\begin{equation*}
k(x)=\int_{\,0}^{+\infty}\hspace{-1mm}d\lambda\,
b(\lambda)\,e^{\ssf i\ssf\lambda\ssf x}
\end{equation*}
is a smooth function on \,$\mathbb{R}$\ssf, 
with the following behavior at infinity:
\begin{equation*}
k(x)=\mathrm{O}\bigl(\ssf|x|^{-\nu-1}\ssf\bigr)
\quad\text{as \,}|x|\!\to\!\infty\,.
\end{equation*}
More precisely, let \ssf$N$ be the smallest integer $>d\!+\!1$\ssf.
Then $\exists\;C\!\ge\!0$\ssf, $\forall\,x\!\in\!\mathbb{R}^*$,
\begin{equation*}
|\ssf k(x)\ssf|\,\le\,C\;|x|^{-\nu-1}\,\sum_{\ell=0}^N\,\sup_{\lambda\in\R^*}\,
(\ssf1\!+\!|\lambda|\ssf)^{\ssf\ell-\nu}\,|\,\partial_\lambda^{\,\ell}b(\lambda)\ssf|\,.
\end{equation*}
}}

\noindent
\textbf{Lemma A.2.}\label{LemmaA2}{\textit{
Let \ssf$b$ be an inhomogeneous symbol on \ssf$\mathbb{R}$
of order \ssf$\nu\!\in\!\mathbb{R}$\ssf.
In other words,
\ssf$b$ is a smooth function on \ssf$\mathbb{R}$
such that
\begin{equation*}
\sup_{\lambda\in\mathbb{R}}\;(\ssf1\!+\!|\lambda|\ssf)^{\ssf\ell-\nu}\,
|\,\partial_\lambda^{\,\ell}b(\lambda)\ssf|\,<\,+\infty
\qquad\forall\;\ell\!\in\!\mathbb{N}\,.
\end{equation*}
Then its Fourier transform
\begin{equation*}
k(x)\ssf={\displaystyle\int_{-\infty}^{+\infty}}\hskip-1mm
d\lambda\,b(\lambda)\,e^{\ssf i\ssf\lambda\ssf x}
\end{equation*}
is a smooth function on \ssf$\mathbb{R}^*$,
which has the following asymptotic behaviors\,:
\begin{itemize}
\item[(i)] At infinity,
\ssf$k(x)\ssb=\ssb\mathrm{O}\bigl(\ssf|x|^{-\infty}\bigr)$\ssf.
More precisely,
for every \ssf$N\!>\ssf \nu\!+\!1$\ssf,
there exists \,$C_N\!\ge\ssb0$ such that,
for every \,$x\!\in\!\mathbb{R}^*$,
\begin{equation*}
|\ssf k(x)\ssf|\le C_N\,|x|^{-N}
\sup_{\lambda\in\R}\;(\ssf1\!+\!|\lambda|\ssf)^{N-\nu}\,
|\,\partial_\lambda^{\ssf N}\ssb b(\lambda)\ssf|\,.
\end{equation*}
\item[(ii)] At the origin,
\begin{equation*}
k(x)=\begin{cases}
\mathrm{O}\ssf(1)
&\text{if \;}\nu\!<\!-1\ssf,\\
\mathrm{O}\ssf(\ssf\log\frac1{|x|})
&\text{if \;}\nu\!=\!-1\ssf,
\vphantom{\frac||}\\
\mathrm{O}\ssf(\ssf|x|^{-\nu-1})
&\text{if \;}\nu\!>\!-1\ssf.\\
\end{cases}
\end{equation*}
More precisely\,:
\\\hskip-1mm$\circ$
\,If \,$\nu\!<\!-1$\ssf,
then there exists \,$C\!\ge\!0$ such that,
for every \,$x\!\in\!\mathbb{R}$\ssf,
\begin{equation*}
|\ssf k(x)\ssf|\le C\,\sup_{\ssf\lambda\in\mathbb{R}}\,
(\ssf1\!+\!|\lambda|\ssf)^{-\nu}\,|\ssf b(\lambda)\ssf|\,.
\end{equation*}
\hskip-1mm$\circ$
\,If \,$\nu\!=\!-1$\ssf,
then there exists \,$C\!\ge\!0$ such that,
for every \,$0\!<\!|x|\!<\!\frac12$\ssf,
\begin{equation*}
|\ssf k(x)\ssf|\ssf\le\ssf C\,\log{\textstyle\frac1{|x|}}\;
\bigl\{\,\sup_{\ssf\lambda\in\mathbb{R}}\;
(\ssf1\!+\!|\lambda|\ssf)\,|\ssf b(\lambda)\ssf|\ssf
+\,\sup_{\lambda\in\mathbb{R}}\;
(1\!+\!|\lambda|)^2\,|\ssf b'(\lambda)\ssf|\,\bigr\}\,.
\end{equation*}
\hskip-1mm$\circ$
\,If \ssf$\nu\!>\!-1$\ssf,
let \ssf$N$ be the smallest integer $>\nu\!+\!1$\ssf.
Then there exists \,$C\!\ge\!0$ \ssf such that,
for every \,$0\!<\!|x|\!<\!1$\ssf,
\begin{equation*}
|\ssf k(x)\ssf|\ssf\le\ssf C\,|x|^{-\nu-1}\,\sum_{\ell=0}^{N}\;
\sup_{\ssf\lambda\in\R}\;(\ssf1\!+\!|\lambda|\ssf)^{\ssf\ell-\nu}\,
|\,\partial_\lambda^{\,\ell}b(\lambda)\ssf|\,.
\end{equation*}
\item[(iii)] Similar estimates hold for the derivatives
\begin{equation*}
\partial_{\ssf x}^{\,\ell}\,k(x)\ssf=
{\displaystyle\int_{-\infty}^{+\infty}}\hskip-1mm
d\lambda\,(i\ssf\lambda)^\ell\,b(\lambda)\,e^{\ssf i\ssf\lambda\ssf x}
\end{equation*}
which correspond to symbols
\,$b_\ell(\lambda)\ssb=\ssb(i\ssf\lambda)^\ell\,a(\lambda)$
of order \ssf$\nu\!+\!\ell$\ssf.
\end{itemize}
}}

\noindent
\textbf{Lemma A.3.}\label{LemmaA3}
{\textit{Assume that
\begin{equation*}\textstyle
b(\lambda)
=\zeta\,\chi_\infty(\lambda)\,\lambda^{-m-1-i\ssf\zeta}
+\ssf f(\lambda)
\end{equation*}
where \ssf$m\!\in\!\mathbb{N}$\ssf,
$\zeta\!\in\!\mathbb{R}$\ssf,
and \ssf$f$ \ssf is a symbol of order \ssf$\nu\!<\!-\ssf m\!-\!1$\ssf.
Then
\begin{equation*}\textstyle
\partial_{\ssf x}^{\ssf m}\ssf k(x)\ssf
={\displaystyle\int_{-\infty}^{+\infty}}\hskip-1mm
d\lambda\,b(\lambda)\,(i\ssf\lambda)^m\,e^{\,i\ssf\lambda\ssf x}
\end{equation*}
is a bounded function at the origin.
More precisely,
there exists \ssf$C\!\ge\!0$ \ssf such that,
for every \ssf$0\!<\!|x|\!<\!\frac12$\ssf,
\begin{equation*}
|\,\partial_{\ssf x}^{\ssf m}\ssf k(x)\ssf|
\le C\,\bigl\{\,1+\zeta^2\ssb+\ssf\sup_{\ssf\lambda\in\mathbb{R}}\,
(\ssf1\!+\!|\lambda|\ssf)^{-\nu}\,|\ssf f(\lambda)\ssf|\,\bigr\}\,.
\end{equation*}
}}
 
\section*{Appendix B}
\label{AppendixB}

In this appendix we collect some properties of the Riesz distributions.
We refer to \cite[ch.\;1, \S\;3 \& ch.\;2, \S\;2]{GC}
or \cite[ch.\;III, \S\;3.2]{Ho1} for more details.
The Riesz distribution $R_{\,z}^{\ssf+}$ is defined by
\begin{equation}\label{Riesz}
\textstyle
\langle\ssf R_{\,z}^{\ssf+},\varphi\ssf\rangle\ssf
=\,\frac1{\Gamma(z)}\,
{\displaystyle\int_{\,0}^{+\infty}}\hspace{-1mm}
d\lambda\;\lambda^{z-1}\,\varphi(\lambda)
\end{equation}
when \ssf$\Re z\!>\!0$\ssf.
It extends to a holomorphic family
\ssf$\{\ssf R_{\,z}^{\ssf+}\}_{z\in\mathbb{C}}$
\ssf of tempered distributions on \ssf$\mathbb{R}$
\ssf which satisfy the following properties\,:
\begin{itemize}

\item[(i)]
\,$\lambda\ssf R_{\,z}^{\ssf+}\ssb=z\ssf R_{\ssf z+1}^{\,+}$
\;$\forall\;z\!\in\ssb\mathbb{C}$\ssf,

\item[(ii)]
\,$(\frac d{d\lambda})\ssf R_{\,z}^{\ssf+}\ssb=R_{\ssf z-1}^{\,+}$
\;$\forall\;z\!\in\ssb\mathbb{C}$\ssf,

\item[(iii)]
\,$R_{\,0}^{\ssf+}\!=\delta_{\ssf0}$
\,and more generally
\,$R_{-m}^{\ssf+}\ssb=(\frac d{d\lambda})^m\ssf\delta_{\ssf0}$
\;$\forall\;m\!\in\!\mathbb{N}$\ssf,

\item[(iv)]
\,$R_{\ssf z+z'}^{\,+}\ssb=R_{\,z}^{\ssf+}\!*\ssb R_{\,z'}^{\ssf+}$
\;$\forall\;z,z'\!\in\ssb\mathbb{C}$\ssf.
\end{itemize}
Hence
\begin{equation*}\textstyle
\langle\ssf R_{\,z}^{\ssf+},\varphi\ssf\rangle\ssf
=\ssf\langle\ssf(\frac d{d\lambda})^m\ssf R_{\ssf z+m}^{\,+}\ssf,
\varphi\ssf\rangle\ssf
=\,\frac{(-1)^m}{\Gamma(z+m)}\,
{\displaystyle\int_{\,0}^{+\infty}}\hspace{-1mm}
d\lambda\;\lambda^{z+m-1}\,
\bigl(\frac d{d\lambda}\bigr)^m\varphi(\lambda)
\end{equation*}
when \ssf$\Re z\!>\!-m$\ssf.
The Riesz distribution
\ssf$R_{\,z}^{\ssf-}\!=\ssb(R_{\,z}^{\ssf+})^{\vee}$
is defined similarly.
Their Fourier transforms are given by
\begin{itemize}

\item[(v)]
\,$\mathcal{F}R_{\,z}^{\ssf\pm}\ssb
=e^{\ssf\pm\ssf i\frac\pi2z}\,
(\ssf x\ssb\pm\ssb i\ssf0\ssf)^{-z}$
\;$\forall\;z\!\in\!\mathbb{C}$\ssf,
\end{itemize}
\vspace{.5mm}

\noindent
where
\begin{equation*}\textstyle
\langle\ssf(x\ssb\pm\ssb i\ssf0)^z,\varphi\ssf\rangle\ssf
=\ssf\lim_{\ssf\varepsilon\searrow0}{\displaystyle\int_{\ssf\R}}
dx\;(x\ssb\pm\ssb i\ssf\varepsilon)^z\,\varphi(x)
\end{equation*}
when \,$\Re z\!>\!-1$ and 
\begin{equation*}\textstyle
(x\ssb\pm\ssb i\ssf0)^z
=\ssf\Gamma(z\!+\!1)\,\{\ssf R_{z+1}^{\,+}\hspace{-.5mm}
+\ssb e^{\ssf\pm\ssf i\ssf\pi z}\ssf R_{z+1}^{\,-}\ssf\}
\end{equation*}
in general
(notice that there are actually no singularities in the last expression).

\section*{Appendix C}
\label{AppendixC}

In this appendix we prove the local kernel estimates
\begin{equation}
|\,\widetilde{w}_{\,t}^{\ssf\infty}(r)\ssf|\ssf
\lesssim\ssf 
\,|t|^{-\frac{n-1}2}
 \end{equation}
stated in Theorem \ref{Estimatewtildetinfty}.i.a
under the assumptions \ssf$0\!<\!|t|\!\le\!2$\ssf, $0\!\le\!r\!\le\!3$
\ssf and $\Re\sigma\!=\!\frac{n+1}2$.
By symmetry, we may assume again that \ssf$t\!>\!0$\ssf.
\smallskip

\noindent$\bullet$
\,\emph{Case 1}\,:
\,Assume that \ssf$r\!\le\!\frac t2$\ssf.
\smallskip

\noindent
By using the representation \eqref{radialisationformula} 
of the spherical functions,
we obtain
\begin{equation}\label{WaveKernel1} 
\widetilde{w}_{\,t}^{\ssf\infty}(r)
={\textstyle\frac{e^{\ssf\sigma^2}}{\Gamma(-\ssf i\Im\sigma)}}\,
\int_{\partial B(\mathfrak{s})} d\sigma\,a^{\frac{Q}2}(r\sigma)
{\displaystyle\int_{\,1}^{\ssf\infty}}\!d\lambda\,
\chi_\infty(\lambda)\,b(\lambda)\,
e^{\,i\ssf\lambda\ssf\{\ssf t- \log a(r\sigma)\}}\,,
\end{equation} 
where
\begin{equation*}\textstyle
b(\lambda)\ssf=\,
|\mathbf{c}\hspace{.1mm}(\lambda)|^{-2}\,\lambda^{-\tau}\,
\bigl(\lambda^2\!+\ssb\frac{\widetilde{Q}^2}4\bigr)^{\!\frac{\tau-\sigma}2}\,,
\end{equation*}
and $a(r\sigma)$ is the $A$-component of the point $r\sigma$. By \eqref{loga} 
\begin{equation*}\textstyle
|\,t\ssb- \log a(r\sigma)\ssf|\ssf\ge\ssf t\ssb-\ssb r\ssf\ge\frac t2
\qquad \forall \sigma\in\partial B(\mathfrak{s})\,,
\end{equation*}
so that according to Lemma A.2 in Appendix A,
since \ssf$\chi_\infty\ssf b$ \ssf is a symbol of order \ssf$\frac{n-3}2$ the inner integral in \eqref{WaveKernel1} is
\begin{equation*}
\text{O}\ssf\bigl(\,|\sigma|^N\,|\,
t\ssb-\ssb  \log a(r\sigma)  |^{-\frac{n-1}2}\ssf\bigr)
=\ssf \text{O}\ssf\bigl(\,|\sigma|^N\,t^{\ssf-\frac{n-1}2}\ssf\bigr)\,,
\end{equation*}
where \ssf$N$ is the smallest integer $>\frac{n-1}2$\ssf.
Hence
\begin{equation*}
|\,\widetilde{w}_{\,t}^{\ssf\infty}(r)\ssf|\,
\lesssim\,t^{\ssf-\frac{n-1}2}\,.
\end{equation*}

\noindent$\bullet$
\,\emph{Case 2}\,:
\,Assume that \ssf$r\!>\!\frac{t}2$\ssf.
\smallskip

\noindent
In this case we estimate \ssf$\widetilde{w}_t(r)$
using the inverse Abel transform.
More precisely,
we apply the inversion formulae \eqref{inv1} and \eqref{inv2}
to the Euclidean Fourier transform
\begin{equation*}\textstyle
\widetilde{g}_{\,t}^{\ssf\infty}(r)\ssf
=\ssf\frac{e^{\ssf\sigma^2}}{\Gamma(-\ssf i\Im\sigma)}\,
{\displaystyle\int_{\,1}^{+\infty}}\hskip-1mm
d\lambda\,\chi_\infty(\lambda)\,|\mathbf{c}\hspace{.1mm}(\lambda)|^{-2}\,\lambda^{-\tau}\,
\bigl(\lambda^2\!+\ssb\frac{\widetilde{Q}^2}4\bigr)^{\!\frac{\tau-\sigma}2}\,
e^{\,i\ssf t\ssf\lambda}\,\cos\lambda\ssf r\,.
\end{equation*}
We shall use the fact that,
for all integers \ssf$p\!\ge\!1$ and \ssf$q\!\ge\!1$\ssf,
\begin{equation}\label{expansion1}
\mathcal{D}_1^p\,\mathcal{D}_2^q=  
\sum_{j=1}^{p+q}\,\sum_{\ell=1}^{\,p}
\gamma_{\ell,j}^{ 0}(r)\,\bigl({\textstyle\frac1r\frac\partial{\partial r}}\bigr)^j\,,
\end{equation}
where the coefficients \ssf$\gamma_{\ell,j}^{0}(r)$ \ssf in \eqref{expansion1}
are smooth functions on \ssf$\R$\ssf, which are linear combinations of products
\begin{equation*}
\begin{aligned} 
&\textstyle
\bigl(\frac r{\sinh r}\bigr)\times
\bigl(\frac1r\frac\partial{\partial r}\bigr)^{\ell_2}
\bigl(\frac r{\sinh r}\bigr)
\times\,\cdots\,\times
\bigl(\frac1r\frac\partial{\partial r}\bigr)^{\ell_p}
\bigl(\frac r{\sinh r}\bigr)\\
&\textstyle\times
\bigl(\frac1r\frac\partial{\partial r}\bigr)^{j_1}
\bigl(\frac r{\sinh(r/2)}\bigr)
\times\,\cdots\,\times
\bigl(\frac1r\frac\partial{\partial r}\bigr)^{j_q}
\bigl(\frac r{\sinh(r/2)}\bigr)
\end{aligned}
\end{equation*}
with \,$\ell_2\!+{\dots}+\ssb\ell_p\!=\ssb p\!-\!\ell$
\,and \,$j_1\!+{\dots}+j_q\!=\ssb q\ssb-\ssb j\ssb+\ssb\ell$\ssf. 
We shall also use the following expansion 
\begin{equation}\label{expansion2} 
\bigl({\textstyle\frac1r\frac\partial{\partial r}}\bigr)^j
={\displaystyle\sum\nolimits_{h=1}^{\,j}}\ssf
\beta_{j,h}\,r^{\ssf h-2j}\,\bigl({\textstyle\frac\partial{\partial r}}\bigr)^h,
\end{equation}
where the coefficients \ssf$\beta_{j,h}$ \ssf in \eqref{expansion2} are constants. 
\smallskip

\noindent$\circ$
\,\emph{Subcase 2.a}\,:
\,Assume that \ssf$k$ \ssf is even.
Then, up to a multiplicative constant,
\begin{equation*}\textstyle
\widetilde{w}_{\,t}^{\ssf\infty}(r)
=\mathcal{D}_1^{k/2}\,\mathcal{D}_2^{m/2}
\widetilde{g}_{\,t}^{\ssf\infty}(r)\,.
\end{equation*}
Consider first
\begin{equation}\label{expression1}\textstyle
\frac{e^{\ssf\sigma^2}}{\Gamma(-\ssf i\Im\sigma)}\,
{\displaystyle\int_{\,1}^{\ssf\frac6r}}
d\lambda\;\chi_\infty(\lambda)\,\lambda^{-\tau}\,
\bigl(\lambda^2\!+\ssb{\textstyle\frac{\widetilde{Q}^2}4}\bigr)^{\!\frac{\tau-\sigma}2}\,
e^{\,i\ssf t\ssf\lambda}\,
\bigl(\ssf\frac1r\frac\partial{\partial r}\bigr)^j
\cos\lambda\ssf r\,.
\end{equation}
Since
\,$\chi_\infty(\lambda)\ssf\lambda^{-\tau}\ssf
 \bigl(\lambda^2\!+\ssb{\textstyle\frac{\widetilde{Q}^2}4}\bigr)^{\!\frac{\tau-\sigma}2}\,
e^{\,i\ssf t\ssf\lambda}
=\text{O}\ssf(\lambda^{-\frac{n+1}2})$
\,according to the assumption \,$\Re\sigma\!= \frac{n+1}2$,
\,and 
\,$\bigl(\frac1r\frac\partial{\partial r}\bigr)^j\ssb\cos\lambda\ssf r
=\text{O}\ssf(\lambda^{2\ssf j\ssf})$
\,by Taylor's formula, the expression \eqref{expression1} is
\begin{equation*}\begin{cases}
\,\text{O}\ssf(1)
&\text{if \,}1\!\le\!j\!<\!\frac {n-1}4\,,\\
\,\text{O}\ssf(\ssf\log\frac1r\ssf)
&\text{if \;}j\!=\!\frac {n-1}4\,,\\
\,\text{O}\ssf(\ssf r^{\ssf \frac{n-1}2-2\ssf j\ssf})
&\text{if \,}\frac{n-1}4\!< j\!\le\!\frac{n-1}2,\\
\end{cases}\end{equation*}
hence \,$\text{O}\ssf(\ssf r^{-\frac{n-1}2}\ssf)$ \,in all cases.
Consider next
\begin{equation}\label{expression2}\textstyle
\frac{e^{\ssf\sigma^2}}{\Gamma(-\ssf i\Im\sigma)}\,
{\displaystyle\int_{\,\frac6r}^{+\infty}}\hskip-1mm
d\lambda\,\lambda^{-\tau}\,
 \bigl(\lambda^2\!+\ssb{\textstyle\frac{\widetilde{Q}^2}4}\bigr)^{\!\frac{\tau-\sigma}2}\,
r^{\ssf h-2\ssf j\ssf}\bigl(\textstyle\frac\partial{\partial r}\bigr)^h
e^{\,i\ssf(t\pm r)\ssf\lambda}\,.
\end{equation}
Since
\,$\bigl(\textstyle\frac\partial{\partial r}\bigr)^h
e^{\,i\ssf(t\pm r)\ssf\lambda}\ssb
=\ssb(\pm\ssf i\lambda)^h\ssf e^{\,i\ssf(t\pm r)\ssf\lambda}$
\,and
\begin{equation*}
\lambda^{-\tau}\,
\bigl(\lambda^2\!+\ssb{\textstyle\frac{\widetilde{Q}^2}4}\bigr)^{\!\frac{\tau-\sigma}2}\,
(\pm\ssf i\lambda)^h\,e^{\,i\ssf(t\pm r)\ssf\lambda}
=\text{O}\ssf(\lambda^{h-\frac{n+1}2})\,,
\end{equation*}
the expression \eqref{expression2} is easily seen to be
\ssf$\text{O}\ssf(\ssf r^{\ssf \frac{n-1}2-2\ssf j\ssf})$
\ssf as long as \ssf$h\!<\!\frac{n-1}2$\ssf. For the remaining case, where \ssf$h\!=\!j\!=\!\frac{n-1}2$\ssf, let us expand
\begin{equation*}\textstyle
\lambda^{-\tau}\,
(\lambda^2\!+\ssb\tilde\rho^{\ssf2})^{\frac{\tau-\sigma}2}
\lambda^{\frac{n-1}2}
=\ssf\lambda^{-1-\ssf i\Im\sigma}\,
\bigl(\ssf1\!+\ssb\frac{\widetilde{Q}^{\ssf2}}{4\lambda^2}
\bigr)^{\ssb\frac{\tau-\sigma}2}\ssb
=\ssf\lambda^{-1-\ssf i\Im\sigma}
+\ssf\text{O}\ssf\bigl(\,|\sigma|\ssf\lambda^{-3}\,\bigr)
\end{equation*}
and split
\begin{equation*}
\int_{\ssf\frac6r}^{+\infty}
=\,\int_{\ssf\frac6r}^{\frac6r+\frac1{|r\pm t|}}
+\,\int_{\frac6r+\frac1{|r\pm t|}}^{+\infty}
\end{equation*}
in \eqref{expression2}.
The previous splitting is meaningful only if \ssf$r\ssb\ne\ssb t$\ssf. 
On one hand, the resulting integrals
\begin{equation}\label{IntegralI}\textstyle
I_\pm=\ssf\frac{e^{\ssf\sigma^2}}{\Gamma(-\ssf i\Im\sigma)}\,
{\displaystyle\int_{\ssf\frac6r}^{\frac6r+\frac1{|r\pm t|}}}\ssb d\lambda\;
\lambda^{-1-\ssf i\Im\sigma}\,e^{\,i\ssf(t\pm r)\ssf\lambda}
\end{equation}
\vspace{-5mm}

\noindent
and
\vspace{-2mm}
\begin{equation}\label{IntegralII}\textstyle
I\!I_\pm=\ssf\frac{e^{\ssf\sigma^2}}{\Gamma(-\ssf i\Im\sigma)}\,
{\displaystyle\int_{\frac6r+\frac1{|r\pm t|}}^{+\infty}}\ssb d\lambda\;
\lambda^{-1-\ssf i\Im\sigma}\,e^{\,i\ssf(t\pm r)\ssf\lambda}
\end{equation}
are uniformly bounded. This is proved by integrations by parts\,:
\vspace{-1mm}
\begin{equation*}\begin{aligned}
\textstyle
I_\pm&\textstyle
=\ssf\frac{e^{\ssf\sigma^2}}{\Gamma(1-\ssf i\Im\sigma)}\ssf
\overbrace{
\lambda^{-\ssf i\Im\sigma}\,e^{\,i\ssf(t\pm r)\ssf\lambda}\,\Big|
_{\ssf\lambda\ssf=\ssf\frac6r}
^{\ssf\lambda\ssf=\ssf\frac6r+\frac1{|r\pm t|}}
}^{\text{O}\ssf(\ssf1\ssf)}\\
&\textstyle
\ssf\mp\,i\,\frac{e^{\ssf\sigma^2}}{\Gamma(1-\ssf i\Im\sigma)}\,
(\ssf r\!\pm\!t\ssf)\ssf\underbrace{
{\displaystyle\int_{\frac1r}^{\frac1r+\frac1{|r\pm t|}}}\!d\lambda\;
\lambda^{-\ssf i\Im\sigma}\,e^{\,i\ssf(t\pm r)\ssf\lambda}
}_{\text{O}\ssf\left(\frac1{|r\pm t|}\right)}\,
=\,\text{O}(1)\ssf,
\end{aligned}\end{equation*}
\vspace{-4mm}

\noindent
while
\begin{equation*}\begin{aligned}
\textstyle
I\!I_\pm&\textstyle
=\ssf\mp\,i\,\frac{e^{\ssf\sigma^2}}{\Gamma(-\ssf i\Im\sigma)}\,
\frac1{r\pm t}\ssf\overbrace{
\lambda^{-1-\ssf i\Im\sigma}\ssf e^{\,i\ssf(t\pm r)\ssf\lambda}\,\Big|
_{\ssf\lambda\ssf=\ssf\frac6r+\frac1{r\pm t}}
^{\ssf\lambda\ssf=\ssf+\infty}
}^{\text{O}\ssf(\ssf |r\ssf\pm\ssf t|\ssf)}\\
&\textstyle
\ssf\mp\,i\,\frac{e^{\ssf\sigma^2}(1+\ssf i\Im\sigma)}{\Gamma(-\ssf i\Im\sigma)}\,
\frac1{r\pm t}\ssf\underbrace{
{\displaystyle\int_{\frac6r+\frac1{|r\pm t|}}^{+\infty}}\!d\lambda\;
\lambda^{-2-\ssf i\Im\sigma}\ssf
e^{\,i\ssf(t\pm r)\ssf\lambda}
}_{\text{O}\ssf(\ssf| r\ssf\pm\ssf t|\ssf)}
=\,\text{O}(1)\ssf.
\end{aligned}\end{equation*}
\vspace{-2mm}

\noindent
Hence the contributions of \eqref{IntegralI} and \eqref{IntegralII}
to \eqref{expression2} are \,$\text{O}\ssf(\ssf r^{-\frac{n-1}2}\ssf)\ssf$.
On the other hand,
the remainder's contribution to \eqref{expression2} is obviously
\ssf$\text{O}\ssf(\ssf r^{\ssf2-\frac{n-1}2}\ssf)\ssf$.
As a conclusion, for all $r>\frac{t}2$ and $r\neq t$, 
\begin{equation*}\textstyle
|\,\widetilde{w}_{\,t}^{\ssf\infty}(r)\ssf|\,
\lesssim\,r^{-\frac{n-1}2}\,\lesssim\,t^{-\frac{n-1}2}\,.
\end{equation*}
If \ssf$r\ssb=\ssb t$\ssf,
the estimates follows as before and is even easier, 
because \ssf$e^{\ssf i\ssf(t-r)\ssf\lambda}\ssb=\ssb1$\ssf.  
\smallskip

\noindent$\circ$
\,\emph{Subcase 2.b}\,:
\,Assume that \ssf$k$ \ssf is odd.
Then, up to a multiplicative constant,
\begin{equation}\label{IAT}\textstyle
\widetilde{w}_{\,t}^{\ssf\infty}(r)\ssf
=\ssf\frac{e^{\ssf\sigma^2}}{\Gamma(-\ssf i\Im\sigma)}\,
{\displaystyle\int_{\,r}^{+\infty}}\hskip-1mm ds\,
\frac{\sinh s}{\sqrt{\cosh s-\cosh r}}\,
\mathcal{D}_1^{(k+1)/2}\,\mathcal{D}_2^{m/2}\,
\widetilde{g}_{\,t}^{\ssf\infty}(s)\,.
\end{equation}
Let us split
\begin{equation}\label{Integral2}
\int_{\ssf r}^{+\infty}\hspace{-1mm}
=\;\int_{\ssf r}^{\,6}\,+\;\int_{\ssf6}^{+\infty}\,.
\end{equation}
The following estimate is obtained
by resuming the proof of Theorem \ref{Estimatewtildetinfty}.i.b
in the odd--dimensional case\,:
\begin{equation*}\textstyle
\bigl| \mathcal{D}_1^{(k+1)/2}\,\mathcal{D}_2^{m/2}\,
\widetilde{g}_{\,t}^{\ssf\infty}(s)\ssf\bigr|
\lesssim\ssf e^{-\frac{Q+1}2\ssf s}
\quad\forall\;s\!\ge\!6\,.
\end{equation*}
Since
\begin{equation*}\textstyle
{\displaystyle\int_{\ssf6}^{+\infty}}\hspace{-1mm}ds\,
\frac{\sinh s}{\sqrt{\cosh s-\cosh r}}\,e^{-\frac{Q+1}2\ssf s}\,
\lesssim{\displaystyle\int_{\,0}^{+\infty}}\hspace{-1.5mm}
\frac{du\vphantom{\big|}}{\sqrt{\ssf\sinh u\ssf}}\,
<+\infty\,,
\end{equation*}
the contribution to \eqref{IAT}
of the second integral in \eqref{Integral2}
is uniformly bounded.
Thus we are left with the contribution of the first integral,
which is a purely local estimate. To do so, we argue as in \cite[Lemma C.1]{APV2} 
and obtain the following result. 
\medskip

\noindent
\textbf{Lemma C.1}\label{LemmaC1}\emph{
Let \ssf$p,q$ be two integers $\ge\!1$
and let \ssf$\lambda\!\ge\!1$\ssf, \ssf$r\!\le\!3$\ssf.
\begin{itemize}
\item[(i)]
Assume that \ssf$\lambda\ssf r\!\le\!6$\ssf.
Then
\begin{equation*}\textstyle
\theta(\lambda,r)\ssf=
{\displaystyle\int_{\,r}^{\,6}}\ssb ds\;
\frac{\sinh s}{\sqrt{\cosh s\,-\,\cosh r}}\;
\mathcal{D}_1^p\,\mathcal{D}_2^q
\cos\lambda\ssf s
\end{equation*}
is \,$\mathrm{O}\ssf(\ssf\lambda^{2p+2q-1-\varepsilon}\,
r^{-\varepsilon}\ssf)$\ssf,
\ssf for every \,$\varepsilon\!>\!0$\ssf.
\item[(ii)]
Assume that $\lambda\ssf r\!\ge\!6$\ssf.
Then
\begin{equation*}\textstyle
\theta^{\ssf\pm}(\lambda,r)\ssf=
{\displaystyle\int_{\,r}^{\,6}}\ssb ds\;
\frac{\sinh s}{\sqrt{\cosh s\,-\,\cosh r}}\;
\mathcal{D}_1^p\,\mathcal{D}_2^q
e^{\ssf\pm\ssf i\ssf\lambda\ssf s}
\end{equation*}
has the following behavior\,:
\begin{equation*}\textstyle
\theta^{\ssf\pm}(\lambda,r)\ssf=\ssf
c_{\ssf\pm}\,\lambda^{p+q-\frac12}\,
(\ssf\sinh r)^{\frac12-p-q}\,
e^{\ssf\pm\ssf i\ssf\lambda\ssf r}
+\mathrm{O}\ssf(\ssf\lambda^{p+q-1}\,r^{-p-q}\ssf)
\end{equation*}
\end{itemize}
where \,$c_{\ssf\pm}$ is a nonzero complex constant.
}

\begin{proof}
We first prove (i). Recall that
\begin{equation*}\textstyle
\mathcal{D}_1^p\,\mathcal{D}_2^q(\cos\lambda\ssf s)\,
=\,\begin{cases}
\,\text{O}\ssf(\lambda^{2p+2q})
&\text{if \,}\lambda\ssf s\!\le\!6\ssf,\\
\,\text{O}\ssf(\lambda^{p+q}\ssf s^{-p-q})
&\text{if \,}\lambda\ssf s\!\ge\!6\ssf,\\
\end{cases}\end{equation*}
hence
\,$\mathcal{D}_1^p\,\mathcal{D}_2^q(\cos\lambda\ssf s)
=\text{O}\ssf(\lambda^{2p+2q-1-\varepsilon}\ssf s^{-1-\varepsilon})$
\,in both cases.
By combining this estimate with
\begin{equation*}
\sinh s\asymp s\,,
\qquad {\rm{and~}} \qquad
\cosh s-\cosh r\asymp s^2\ssb-r^2\ssf,
\end{equation*}
and by performing an elementary change of variables,
we reach our conclusion\,:
\begin{equation*}\textstyle
|\ssf\theta(\lambda,r)|\,
\lesssim\,\lambda^{2p+2q-1-\varepsilon}\,
{\displaystyle\int_{\,r}^{\,6}}\ssb ds\;
s^{-\varepsilon}\,(s^2\!-\ssb r^2)^{-\frac12}\ssf
\le\,\lambda^{2p+2q-1-\varepsilon}\,r^{-\varepsilon}
\underbrace{{\displaystyle\int_{\,1}^{+\infty}}\!ds\;
s^{-\varepsilon}\,(s^2\!-\!1)^{-\frac12}}_{<+\infty}.
\end{equation*}
We next prove (ii). Recall that
\begin{equation*}\textstyle
\mathcal{D}_1^p\,\mathcal{D}_2^q\ssf
(e^{\ssf\pm\ssf i\ssf\lambda\ssf s})
=\frac{(\pm\,i\ssf\lambda)^{p+q}}{(\sinh s)^p(\sinh s/2)^q}\ssf
e^{\ssf\pm\ssf i\ssf\lambda\ssf s}
+\text{O}\ssf(\lambda^{p+q-1}s^{-p-q-1})
\end{equation*}
The remainder's contribution to \ssf$\theta^{\ssf\pm}(\lambda,r)$
\ssf is estimated as above\,:
\begin{equation*}\textstyle
{\displaystyle\int_{\,r}^{\,6}}\ssb ds\;
\frac{\sinh s}{\sqrt{\cosh s\,-\,\cosh r}}\;
\lambda^{p+q-1}\,s^{-p-q-1}\,
\lesssim\,\lambda^{p+q-1}\,
{\displaystyle\int_{\,r}^{\,6}}\ssb ds\;
s^{-p-q}\,(s^2\!-\ssb r^2)^{-\frac12}\,
\lesssim\,\lambda^{p+q-1}\,r^{-p-q}\ssf.
\end{equation*}
In order to handle the contribution of $\frac{(\pm\,i\ssf\lambda)^{p+q}}{(\sinh s)^p(\sinh s/2)^q}\ssf
e^{\ssf\pm\ssf i\ssf\lambda\ssf s}$ we observe that, since $r\leq s\leq 6$, this term is comparable to 
$\frac{(\pm\,i\ssf\lambda)^{p+q}}{(\sinh s)^{p+q}}\ssf
e^{\ssf\pm\ssf i\ssf\lambda\ssf s}$ and we conclude as in 
\cite[Lemma C.1]{APV2}. 
\end{proof}

\noindent
From now on, the discussion of Subcase 2.b is similar to Subcase 2.a.
On one hand, by applying Lemma C.1.i
with \ssf$p\ssb=\!\frac{k+1}2$ and \ssf$q\ssb=\!\frac m2$,
we obtain
\begin{equation*}\textstyle
 \int_{\,1}^{ \frac6r}
d\lambda\;\chi_\infty(\lambda)\,\lambda^{-\tau}\,
\bigl(\lambda^2\!+\ssb{\textstyle\frac{\widetilde{Q}^2}4}\bigr)^{\!\frac{\tau-\sigma}2}\,
e^{\,i\ssf t\ssf\lambda}\;\theta(\lambda,r)\,
=\,\text{O}\ssf(\ssf r^{k+m  -{\rm{Re}}\sigma\,+1   }\ssf)
=\text{O}\big(r^{-\frac{n-1}2}\big)\,.
\end{equation*}
On the other hand, by expanding
\begin{equation*}\textstyle
\lambda^{-\tau}\,
\bigl(\lambda^2\!+\ssb{\textstyle\frac{\widetilde{Q}^2}4}\bigr)^{\!\frac{\tau-\sigma}2}\,
=\ssf\lambda^{-\sigma}\,
\bigl(\,1\ssb+\ssb\frac{\widetilde{Q}^2}{4\lambda^2}\ssf\bigr)^{\frac{\tau-\sigma}2}
=\ssf\lambda^{-\frac{n+1}2-\ssf i\Im\sigma}
+\ssf\text{O}\ssf\bigl(\,|\sigma|\,\lambda^{-\frac{n+1}2-2}\ssf\bigr)
\qquad\forall\;\lambda\!\ge\!2
\end{equation*}
and \,$\theta^{\ssf\pm}(\lambda,r)$ \ssf according to Lemma C.1.ii\ssf,
we have
\begin{equation*}\begin{aligned}
&\textstyle
\frac{e^{\ssf\sigma^2}}{\Gamma(-\ssf i\Im\sigma)}\,
{\displaystyle\int_{\ssf\frac6r}^{+\infty}}\hspace{-1mm}
d\lambda\;\chi_\infty(\lambda)\,\lambda^{-\tau}\,
\bigl(\lambda^2\!+\ssb{\textstyle\frac{\widetilde{Q}^2}4}\bigr)^{\!\frac{\tau-\sigma}2}\,
e^{\,i\ssf t\ssf\lambda}\;\theta^{\ssf\pm}(\lambda,r)\\
&\textstyle
=\,c_{\ssf\pm}\,(\ssf I_\pm\!+\ssb I\!I_\pm\ssf)\,
(\ssf\sinh r)^{\frac{1-n}2}\ssf
+\,\text{O}\bigl(\ssf r^{\frac52-\frac{n}2}\ssf\bigr)\,,
\end{aligned}\end{equation*}
where \,$I_\pm$ and \,$I\!I_\pm$ denote
the integrals \eqref{IntegralI} and \eqref{IntegralII},
which are uniformly bounded and whose sum is equal to
\begin{equation*}\textstyle
\frac{e^{\ssf\sigma^2}}{\Gamma(-\ssf i\Im\sigma)}\,
{\displaystyle\int_{\ssf\frac6r}^{+\infty}}\hspace{-1mm}d\lambda\;
\lambda^{-1-i\Im\sigma}\,e^{\,i\ssf(t\ssf\pm\ssf r)\ssf\lambda}\,.
\end{equation*}
As a conclusion, we obtain again
\begin{equation*}\textstyle
|\,\widetilde{w}_{\,t}^{\ssf\infty}(r)\ssf|\,
\lesssim\,r^{-\frac{n-1}2}\,
\lesssim\,t^{-\frac{n-1}2}\,.
\end{equation*}
\medskip

\noindent
\textbf{Remark C.3.}\emph{
In order to estimate the wave kernel for small time,
we might have used the \textit{Hadamard parametrix\/} \cite[\S\;17.4]{Ho3}
instead of spherical analysis.
}


\begin{thebibliography}{9999}

%\bibitem{An} J.--Ph. Anker,
%\textit{$L^p$ Fourier multipliers
%on Riemannian symmetric spaces of the noncompact type},
% Ann. of Math. 132 (1990), 597--628 

\bibitem{ADY}
J.--Ph. Anker, E. Damek, C. Yacoub,
\textit{Spherical analysis on harmonic $AN$ groups\/},
Ann. Sc. Norm. Super. Pisa 33 (1996), 643--679

\bibitem{AP} 
J.--Ph. Anker, V. Pierfelice, 
\textit{Nonlinear Schr\"odinger equation on real hyperbolic spaces\/},
Ann. Inst. H. Poincar\'e (C) {\it Non Linear Analysis\/} 26 (2009), 1853--1869

\bibitem{APV1}
J.--Ph. Anker, V. Pierfelice, M. Vallarino,
\textit{Schr\"odinger equations on Damek--Ricci spaces\/}, 
to appear in Comm. Part. Diff. Eq.

\bibitem{APV2}
J.--Ph. Anker, V. Pierfelice, M. Vallarino,
\textit{Nonlinear wave equation on real hyperbolic spaces\/}, 
preprint [hal--00525251, arXiv:1010.2372]

\bibitem{A1} 
F. Astengo, 
\textit{A class of $L^p$--convolutors
on harmonic extensions of $H$--type groups\/},
J. Lie Theory (5) 9 (1995), 147--164

%\bibitem{A2} ????
%F. Astengo, 
%\textit{Multipliers for a distinguished Laplacean
%on solvable extensions of $H$--type groups},
%Monatsh. Math. 120 (1995), 179--188

%\bibitem{Ba} ????
%V. Banica,
%\textit{The nonlinear Schr\"odinger equation on the hyperbolic space\/},
%Comm. P.D.E. 32 (2007), 1643--1677
%
%\bibitem{BCS} ????
%V. Banica, R. Carles, G. Staffilani,
%\textit{Scattering theory
%for radial nonlinear Schr\"odinger equations
%on hyperbolic space\/},
%Geom. Funct. Anal.  18  (2008),  367--399

%\bibitem{BL} ????
%J. Bergh, J. L\"ofstrom,
%%\textit{Interpolation spaces\/} (\textit{An introduction\/}),
%%Grundlehren Math. Wiss. 223, Springer--Verlag (1976)

%\bibitem{Bog} J. Boggino,
%\textit{Generalized Heisenberg groups and solvmanifolds naturally associated\/},
%Rend. Sem. Mat. Univ. Polit. Torino 43 (1985), 529--547

%\bibitem{Bo}
%J. Bourgain,
%\textit{Fourier transformation restriction phenomena for certain lattice subsets
%and application to the nonlinear evolution equations I -- Schr\"odinger equations\/},
%Geom. Funct. Anal. 3 (1993), 107--156

%\bibitem{BGT}
%N. Burq, P. G\'erard, N. Tzvetkov,
%\textit{Strichartz inequalities and the nonlinear Schr\"odinger equation
%on compact manifolds\/},
%Amer. J. Math. 126 (2004), 569--605

%\bibitem{CK}
%M. Christ, A. Kiselev,
%\textit{Maximal functions associated to filtrations\/},
%J. Funct. Anal. 179 (2001), 409-425

%\bibitem{CS} J.--L. Clerc, E. M. Stein,
%\textit{$L^p$--multipliers for noncompact symmetric spaces\/},
%Proc. Nat. Acad. Sci. U.S.A. 71 (1974), 3911--3912
%
%\bibitem{Co1} ????
%M.G. Cowling,
%\textit{The Kunze--Stein phenomenon\/},
%Ann. Math 107 (1978), 209--234

%\bibitem{Co2} ????
%M.G. Cowling,
%\textit{Herz's``principe de majoration'' and the Kunze-Stein phenomenon\/},
%in \textit{Harmonic analysis and number theory\/} (Montreal, 1996),
%CMS Conf. Proc. 21, Amer. Math. Soc. (1997), 73--88

\bibitem{CDKR1} 
M.G. Cowling, A.H. Dooley, A. Kor\'anyi, F. Ricci,
\textit{$H$--type groups and Iwasawa dwcompositions\/},
Adv. Math. 87 (1991), 1--41

\bibitem{CDKR2} 
M.G. Cowling, A.H. Dooley, A. Kor\'anyi, F. Ricci,
\textit{An approach to symmetric spaces of rank one via groups of Heisenberg type\/},
J. Geom. Anal. 8 (1998), 199--237
%
%\bibitem{CF} ????
%M.G. Cowling, J.J.F. Fournier,
%\textit{Inclusions and noninclusion of spaces of convolution operators\/},
%Trans. Amer. Math. Soc.  221 (1976), 59--95

%\bibitem{CGHM}
%M. Cowling, S. Giulini, A. Hulanicki, G. Mauceri,
%\textit{Spectral multipliers for a distinguished laplacian
%on certain groups of exponential growth\/}, 
%Studia Mathematica 111(2) (1994), 103--121

%\bibitem{CGM1} ????
%M.G. Cowling, S. Giulini, S. Meda,
%\textit{$L^p\!-\!L^q$ estimates for functions of the Laplace--Beltrami operator
%on noncompact symmetric spaces I\/},
%Duke Math. J. 72 (1993), 109--150

%\bibitem{CGM2} ????
%M.G. Cowling, S. Giulini, S. Meda,
%\textit{$L^p\!-\!L^q$ estimates for functions of the Laplace--Beltrami operator
%on noncompact symmetric spaces II\/},
%J. Lie. Th. 5 (1995), 1--14

%\bibitem{CGM3} ????
%M.G. Cowling, S. Giulini, S. Meda,
%\textit{$L^p\!-\!L^q$ estimates for functions of the Laplace--Beltrami operator
%on noncompact symmetric spaces III\/},
%Ann. Inst. Fourier 51 (2001), 1047--1069

%\bibitem{D1}
%E. Damek,
%\textit{Geometry of a semidirect extension of a Heisenberg type nilpotent group\/},
%Colloq. Math. 53 (1987), 255--268

%\bibitem{D2} 
%E. Damek,
%\textit{Curvature of a semidirect extension of a Heisenberg type nilpotent group\/},
%Colloq. Math. 53 (1987), 249--253

\bibitem{DR1} 
E. Damek, F. Ricci,
\textit{A class of nonsymmetric harmonic Riemannian spaces},
Bull. Amer. Math. Soc. 27 (1992), 139--142

\bibitem{DR2} 
E. Damek, F. Ricci,
\textit{Harmonic analysis on solvable extensions of $H$--type groups\/},
J. Geom. Anal. 2 (1992), 213--248

%\bibitem{GP}
%P. G\'erard, V. Pierfelice,
%\textit{Nonlinear Schr\"odinger equation
%on four--dimensional compact manifolds\/},
%preprint (2005) 

\bibitem{DGK}
P. D'Ancona, V. Georgiev, H. Kubo,
\textit{Weighted decay estimates for the wave equation\/},
J. Diff. Eq. 177 (2001), 146--208

\bibitem{DF1}
P. D'Ancona, L. Fanelli,
\textit{Decay estimates for the wave and Dirac equations
with a magnetic potential\/},
Comm. Pure Appl. Math. 60 (2007), 357--392

\bibitem{DF2}
P. D'Ancona, L. Fanelli,
\textit{Strichartz and smoothing estimates of dispersive equations
with magnetic potentials\/},
Comm. Partial Diff. Eq. 33 (2008), 1082--1112

\bibitem{GC}
I.M. Guelfand, G.E. Chilov, 
\textit{Les distributions\/},
tome I, Dunod (1962)

\bibitem{G}
V. Georgiev,
\textit{Semilinear hyperbolic equations\/},
Mem. Math. Soc. Japan 7 (2000)

\bibitem{GLS}
V. Georgiev, H. Lindblad, C. Sogge,
\textit{Weighted Strichartz estimates and global existence
for semilinear wave equations\/},
Amer. J. Math. 119 (1997), 1291--1319

\bibitem{GV}
J. Ginibre, G. Velo,
\textit{Generalized Strichartz inequalities for the wave equation\/},
J. Funct. Anal. 133 (1995), 50--68

%\bibitem{HTW} A. Hassell, T. Tao, J. Wunsch,
%\textit{Sharp Strichartz estimates on non--trapping asymptotically conic manifolds\/},
%Amer. J. Math. 128 (2006), 963--1024

%\bibitem{HS} 
%W. Hebisch, T. Steger,
%\textit{Multipliers and singular integrals on exponential growth groups\/},
%Math. Z. 245 (2003) no.1, 37--61 
%
%\bibitem{Hel1} ????
%S. Helgason, 
%\textit{Differential geometry, Lie groups, and symmetric spaces\/},
%Academic Press (1978), Amer. Math. Soc. (2001)

%\bibitem{Hel2} ????
%S. Helgason, 
%\textit{Groups and geometric analysis}
%(\textit{integral geometry, invariant differential operators,
%and spherical functions\/}),
%Academic Press (1984), Amer. Math. Soc. (2002) 

%\bibitem{Hel3} ????
%S. Helgason,
%\textit{Geometric analysis on symmetric spaces\/},
%Amer. Math. Soc. (1994) 

\bibitem{Her}
C.S. Herz,
\textit{Sur le ph\'enom\`ene de Kunze--Stein\/},
C. R. Acad. Sci. Paris S\'er. A 271 (1970 ), 491--493

\bibitem{Ho1}
L.V. H\"ormander,
{\it The analysis of linear partial differential operators I\/}
({\it distribution theory and Fourier analysis\/}),
Springer--Verlag (1983, 1990, 2003)

\bibitem{Ho3}
L.V. H\"ormander,
{\it The analysis of linear partial differential operators III\/}
({\it pseudo--differential operators\/}),
Springer--Verlag (1985, 1994, 2007)

\bibitem{I1}
A.D. Ionescu,
\textit{Fourier integral operators
on noncompact symmetric spaces of real rank one\/},
J. Funct. Anal.  174 (2000), 274--300
%
%\bibitem{I2} ????
%A.D. Ionescu,
%\textit{An endpoint estimate for the Kunze--Stein phenomenon
%and related maximal operators\/},
%Ann. of Math. (2) 152 (2000), 259--275 

%\bibitem{I3} A.D. Ionescu, 
%\textit{Singular integrals on symmetric spaces of real rank one\/},
%Duke Math. J.  114  (2002), 101--122

%\bibitem{I4} A.D. Ionescu,
%\textit{Singular integrals on symmetric spaces. II\/},
%Trans. Amer. Math. Soc. 355 (2003), 3359--3378 

\bibitem{J}
F. John, 
\textit{Blow--up of solutions of nonlinear wave equations in three space dimensions\/}
Manuscripta Math. 28 (1979), 235--265
%
%\bibitem{Kapi} ????
%L. Kapitanski,
%\textit{Weak and yet weaker solutions of semilinear wave equations\/},
%Comm. Partial Diff. Eq. 19 (1994), 1629--1676

\bibitem{Kapl} 
A. Kaplan, 
\textit{Fundamental solutions for a class of hypoelliptic PDE
generated by composition of quadratic forms\/},
Trans. Amer. Math. Soc. 258 (1975), 145--159

%\bibitem{K}
%T. Kato,
%\textit{On nonlinear Schr\"odinger equations\/},
%Ann. Inst. H. Poincar\'e (A) Phys. Thor. 46 (1987), 113--129
  
\bibitem{KT}
M. Keel, T. Tao,
\textit{Endpoint Strichartz estimates\/},
Amer. J. Math. 120 (1998), 955--980

\bibitem{KP}
S. Klainerman, G. Ponce,
\textit{Global, small amplitude solutions to nonlinear evolution equations\/},
Comm. Pure Appl. Math. 36 (1983), 133--141

\bibitem{Ko}
T.H. Koornwinder,
\textit{Jacobi functions and analysis on noncompact semisimple Lie groups\/},
in \textit{Special functions\/} (\textit{group theoretical aspects and applications\/}),
R.A. Askey \& al. (eds.), Reidel (1984), 1--85

\bibitem{LS}
H. Lindblad, C. Sogge,
\textit{On existence and scattering
with minimal regularity
for semilinear wave equations\/}, 
J. Funct. Anal.  130 (1995), 357--426
 
%\bibitem{MeTa} ????
%J. Metcalfe, M.E. Taylor, 
%\textit{Nonlinear waves on 3D hyperbolic space\/}, 
%to appear in Trans. Amer. Math. Soc. 

%\bibitem{MuTh} ????
%D. M\"uller, C. Thiele, 
%\textit{ Wave equation and multiplier estimates on $ax+b$ groups\/}, 
%Studia Math.  179  (2007), 117--148

%\bibitem{MV} ????
%D. M\"uller, M. Vallarino, 
%\textit{Wave equation and multiplier estimates on Damek--Ricci spaces\/}, 
%J. Fourier Anal. Appl. 16 (2010), 204--232
 
\bibitem{P} 
V. Pierfelice,
\textit{Weighted Strichartz estimates
for the Schr\"odinger and wave equations
on Damek--Ricci spaces\/},
Math. Z.  260 (2008), 377--392

\bibitem{R} F. Ricci,
\textit{The spherical transform on harmonic extensions of $H$--type groups\/},
Rend. Sem. Mat. Univ. Polit. Torino 50 (1992), 381--392

\bibitem{Si}
T. Sideris, 
\textit{Nonexistence of global solutions
to semilinear wave equations
in high dimensions\/}, 
J. Differential Equations 52 (3) (1984), 378--406

\bibitem{Stra}
W. Strauss,
\textit{Nonlinear Wave Equations\/}, 
CBMS Reg. Conf. Ser. Math. 73, Amer. Math. Soc. (1989)

%\bibitem{Sz}
%Z.I. Szabo,
%\textit{The Lichnerowicz conjecture on harmonic manifolds\/},
%J. Diff. Geom. 31 (1990), 1--28 

%\bibitem{TVZ}
%T. Tao, M. Visan, X. Zhang,
%\textit{Global well--posedness and scattering
%for the defocusing mass--critical nonlinear Schr\"odinger equation 
%for radial data in high dimensions\/},
%Duke Math. J. 140 (2007), 165--202

\bibitem{Ta}
D. Tataru,
\textit{Strichartz estimates in the hyperbolic space
and global existence for the semilinear wave equation\/},
Trans. Amer. Math. Soc. 353 (2001), 795--807
%
\bibitem{Tr2}
H. Triebel,
\textit{Theory of function spaces II\/},
Monographs Math. 84, Birkh\"auser (1992)
%
%\bibitem{V} ????
%M. Vallarino, 
%\textit{Spectral multipliers on harmonic extensions of $H$--type groups\/},
%J. Lie Theory 17 (2007), 163--189 

\end{thebibliography}
\end{document}